\documentclass{article}
\usepackage{graphicx, amscd, amsmath,amssymb, amstext,amsthm,amsfonts, enumitem, comment, breqn}
\usepackage{mathrsfs}
\usepackage[utf8]{inputenc}
\usepackage[T1]{fontenc}
\usepackage{hyperref}
\usepackage[a4paper, left=2.6cm, right=2.6cm, top=2.5cm, bottom=2.5cm]{geometry}
\usepackage{authblk}
\usepackage{marvosym}
\usepackage{tikz}
\usetikzlibrary{positioning, arrows.meta}
\usepackage[pagewise]{lineno}

\newcommand{\al} {\alpha}       

\newcommand{\be} {\beta}

\newcommand{\Ga}{\Gamma}
\newcommand{\de} {\delta}

\newcommand{\vep}{\varepsilon}

\newcommand{\la} {\lambda}

\newcommand{\si} {\sigma}

\newcommand{\SB}{{\mathcal B}}

\newcommand{\SL}{{\mathcal L}}

\newcommand{\SU}{{\mathcal U}}
\newcommand{\SV}{{\mathcal V}}

\newcommand{\N}{\mathbb{N}}

\newcommand{\seqx}{\underline{x}}
\newcommand{\seqw}{\underline{w}}
\newcommand{\seqz}{\underline{z}}
\newcommand{\orbx}{\seqx_T}
\newcommand{\orbz}{\seqz_T}
\newcommand{\dbar}{\bar{d}}
\newcommand{\dist}{\rho}
\newcommand{\alf}{\mathscr{A}}
\newcommand{\FS}{\alf^\infty}
\newcommand{\emptyword}{\lambda}
\newcommand{\lang}{\mathcal{L}}
\newcommand{\distBT}{\dist^T_B}
\newcommand{\distB}{\dist_B}

\newtheorem{mainthm}{Theorem}

\newtheorem*{cor}{Corollary}

\makeatletter
\def\keywords{\xdef\@thefnmark{}\@footnotetext}
\makeatother

\newtheorem{theorem}{Theorem}[section] 
\newtheorem{lemma}[theorem]{Lemma}     
\newtheorem{corollary}[theorem]{Corollary}
\newtheorem{prop}[theorem]{Proposition}
\newtheorem{remark}[theorem]{Remark}
\newtheorem{question}[theorem]{Question}

\theoremstyle{definition}
\newtheorem{defi}[theorem]{Definition}

\newtheorem{fact}[theorem]{Fact}
\newtheorem{example}[theorem]{Example}

\title{On the weakness of the vague specification property}

\author{Melih Emin Can\textsuperscript{1 \href{mailto:melih.can@im.uj.edu.pl}{\Letter}} and Alexandre Trilles\textsuperscript{1,2 \href{mailto:alexandre.trilles@doctoral.uj.edu.pl}{\Letter}}}

\affil{\textsuperscript{1}Faculty of Mathematics and Computer Science, Jagiellonian University in Krak\'ow \\ Ul. {\L}ojasiewicza 6, 30-348 Krak\'ow, Poland}

\affil{\textsuperscript{2}Doctoral School of Exact and Natural Sciences, Jagiellonian University in Krak\'ow \\ Ul. {\L}ojasiewicza 11, 30-348 Krak\'ow, Poland}

\date{}

\begin{document}

	\maketitle

    \begin{abstract}
        We show that the vague specification property is strictly weaker than most of the specification-like properties, by establishing its equivalence with the asymptotic average shadowing property. In particular, we see that the weak specification property implies the vague specification property, but the converse does not hold, answering the question posed by Downarowicz and Weiss in [\emph{Ergod. Th. \& Dynam. Sys.} \textbf{44}(9) (2024), 2565-2580]. Additionally, we prove that, for surjective systems, the asymptotic average shadowing property is equivalent to the average shadowing property if the phase space is complete with respect to the dynamical Besicovitch pseudometric. We use the combination of both results to prove that the proximal and minimal shift spaces from [\emph{Ergod. Th. \& Dynam. Sys.}, \textbf{45}(2) (2025), 396–426] possess the vague specification property (asymptotic average shadowing property). 
        Our findings also allow us to address a couple of questions from [\emph{Fund. Math.}, \textbf{224}(3) (2014), 241–278] about the asymptotic average shadowing property.
    \end{abstract}
    
	\section{Introduction}
 
\keywords{\emph{Mathematics Subject Classification 2020.} Primary 37B65 ; Secondary 37B05, 37B10}

\keywords{\emph{Key words.} shadowing property, specification property, Besicovitch pseudometric, shift space, minimal system, proximal system.}

\keywords{MEC was supported by NCN Polonez Bis grant no. 2021/43/P/ST1/02885 and NCN Polonez Bis grant No. 2022/47/P/ST1/00854.
AT was supported by NCN Polonez Bis grant no. 2021/43/P/ST1/02885 and NCN Sonata Bis grant no. 2019/34/E/ST1/00082.}
       
    In the early 1970s, Bowen \cite{Bowen} introduced the specification property and used it to study Axiom A diffeomorphisms. Roughly speaking, the specification property guarantees the existence of an orbit to trace arbitrary, but finite, collections of segments of orbits, provided that these segments are sparse enough in time. 
    Systems with the specification property have very rich dynamics since this property implies, for example, topological mixing, positive topological entropy, and intrinsic ergodicity (see \cite{Bowen_MME, Sigmund}).

    Although the specification property is satisfied by important classes of systems, such as mixing interval maps and mixing shifts of finite type, weaker forms of specification were introduced in order to cover relevant classes without specification in the sense of Bowen. Some examples of these variants are the weak specification property \cite{Marcus_weak_spec,Dateyama_2}, satisfied by all automorphisms of compact Abelian groups for which the Haar measure is ergodic \cite{Dateyama_automorphisms} and the almost specification \cite{Pfister_Sullivan, Thompson}, satisfied by $\be$-shifts.
    Specification-like properties have been studied by several authors, and we refer to \cite{KLO} for an overview. Below, we present a simplified version of the diagram from \cite{KLO} and emphasize that there are no additional implications between these notions besides those given by transitivity.

    \begin{center}
    \begin{tikzpicture}[
    node distance=.7cm and 1.3cm,
    every node/.style={draw, rounded corners, minimum width=3cm, align=center},
    every path/.style={thick, -Implies}
    ]

    \node (spec) {specification};
    \node (periodic) [left=of spec, yshift=-.8cm] {periodic \\specification};
 
    \node (almost) [right= of spec, xshift=2mm] {almost\\specification};
    \node (weak) [right=of spec, yshift=-1.5cm, xshift=2mm] {weak \\specification};
    \node (periodicweak) [left=of weak, xshift=-.2cm] {periodic\\ weak specification};

    \draw (spec.east) edge[thick, double, -Implies] (almost.west);
    \draw (periodic) edge[thick, double, -Implies] (spec.west); 
    \draw (periodic) edge[thick, double, -Implies] (periodicweak.west);
    \draw (periodicweak) edge[thick, double, -Implies] (weak);
    \draw (spec.south east) edge[double, -Implies] (weak.north west);
    \end{tikzpicture}
    \end{center}

    The specification property and its variants have been extensively used in ergodic theory to investigate, for example, measures of maximal entropy, large deviation, and the existence of generic points for not necessarily ergodic measures. Recently, Downarowicz and Weiss \cite{DW} used the weak specification to prove results about lifting generic points. Kamae \cite{Kamae} obtained similar results in 1976 under the assumption of a less explored specification-like property (not considered in \cite{KLO}), called the vague specification property, which he introduced in the same work. This connection led to the question posed at the end of Downarowicz and Weiss' work about how the weak and the vague specification properties relate to each other.
    
    We show that the vague specification property is implied by all the properties displayed in the diagram above, providing, as a consequence, an answer to Downarowicz and Weiss' question. This follows from Theorem \ref{thm_AASP_VSP_intro}, where we establish the equivalence of the vague specification and a variant of the shadowing property introduced by Gu \cite{Gu}, called the asymptotic average shadowing property, combined with previous results on the latter property.

    \begin{mainthm}\label{thm_AASP_VSP_intro}
        A topological dynamical system $(X,T)$ has the vague specification property if and only if $(X,T)$ has the asymptotic average shadowing property.
    \end{mainthm}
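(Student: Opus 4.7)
Both properties are naturally statements about the density closeness of a true orbit to a prescribed sequence, which can be phrased uniformly using the dynamical Besicovitch pseudometric $\distBT$ on $X^{\N}$ (the limsup of the Cesàro averages of the pointwise distance between sequences). The plan is to recast each property in terms of $\distBT$ and then establish the equivalence through two symmetric constructions: one that \emph{assembles} orbit segments into an asymptotic average pseudo-orbit, and one that \emph{decomposes} an asymptotic average pseudo-orbit into orbit segments.

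\emph{AASP implies vague specification.} I would start with an arbitrary (finite or countable) family of orbit segments $(T^i x_k)_{0\le i<N_k}$ as appears in the statement of VSP and form a single sequence $\seqw=(w_n)$ by concatenating them, inserting between the $k$th and the $(k{+}1)$st segment a "gap" of length $M_k$ filled in arbitrarily, with $M_k$ chosen to grow so fast that the set of gap-indices has density zero. Since all contributions to $\frac{1}{n}\sum_{i<n} d(Tw_i,w_{i+1})$ come from the segment boundaries, which form a density-zero set, the sequence $\seqw$ is an asymptotic average pseudo-orbit. The AASP provides a point $z$ whose orbit is $\distBT$-close to $\seqw$ (in fact at distance $0$), and a Fubini-type density argument over the segment blocks yields the quantitative tracing required by VSP.

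\emph{Vague specification implies AASP.} Conversely, let $\seqx=(x_n)$ be an asymptotic average pseudo-orbit. I would decompose $\seqx$ by cutting at the bad indices $B_\vep=\{n:d(Tx_n,x_{n+1})\ge\vep\}$, whose upper density tends to $0$ with $\vep$. Between consecutive cuts, $\seqx$ is a finite $\vep$-pseudo-orbit, which, using compactness of $X$ and the standard fact that finite pseudo-orbits admit nearby orbit segments of the same length (up to an explicit error bound), I replace by a genuine orbit segment. Applying VSP to the resulting countable family of orbit segments, through a diagonal argument over finite sub-families, yields a true orbit that approximates $\seqx$ on a set of density at least $1-o_\vep(1)$. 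Combining this with the boundedness of $d$ and letting $\vep\to0$ delivers $\distBT$-closeness to $\seqx$, which is precisely AASP.

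\emph{Main obstacle.} The delicate direction is VSP $\Rightarrow$ AASP. The decomposition of an asymptotic average pseudo-orbit into true orbit pieces is never exact, only up to $\vep$, and the segments produced may have unbounded length; moreover, VSP is typically formulated for finite families, so passing to a countable family requires a diagonal argument that simultaneously controls the per-segment approximation error and the density of the matching set. I would manage this by tying the sequence $\vep_k\to 0$ to the scales at which the finite sub-families of segments are handled, exploiting the insensitivity of $\distBT$ to modifications on density-zero sets to absorb all error terms.
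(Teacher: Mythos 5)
Your proposal is built on a reading of the vague specification property as a statement about tracing countable families of orbit segments, but that is not the definition in play here (nor Kamae's): the VSP quantifies over all \emph{vague pseudo-orbits}, i.e.\ all $\seqx\in X^\infty$ such that $d(\{n : S^n(\seqx)\in\SU\})=1$ for every neighborhood $\SU$ of $\Ga_T$. This mismatch makes both directions incomplete as written. In the direction AASP $\Rightarrow$ VSP, concatenating orbit segments with density-zero gaps only handles a special subclass of vague pseudo-orbits; what is actually needed is that \emph{every} vague pseudo-orbit is an asymptotic average pseudo-orbit, which one gets by testing against the specific two-coordinate neighborhood $\SU=\{\underline{y} : \exists z,\ \dist(y_0,z)<\de,\ \dist(y_1,T(z))<\de\}$ and invoking uniform continuity of $T$ — a step your construction never performs.

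The more serious gap is in VSP $\Rightarrow$ AASP. The "standard fact that finite pseudo-orbits admit nearby orbit segments of the same length (up to an explicit error bound)" is false as stated: for a finite $\vep$-pseudo-orbit of length $N$, the bound on $\dist(T^j(x_0),x_j)$ obtained from uniform continuity degrades as $N$ grows, and the blocks between consecutive bad indices of $B_\vep$ have unbounded length precisely because $\dbar(B_\vep)\to 0$. You flag this obstacle, but the proposed diagonal argument does not address it, since no shadowing hypothesis is available to convert long pseudo-orbit blocks into genuine orbit segments. The resolution is to abandon the decomposition into maximal blocks: since neighborhoods of $\Ga_T$ in the product topology constrain only finitely many coordinates, it suffices (by a compactness argument over $\Ga_T$) to control windows of a \emph{fixed} length $k$ depending only on $\SU$, where the compounded error over $k$ steps is controlled by uniform continuity; one then uses the elementary fact that a density-one set of good indices contains a density-one set of starting points of good runs of length $k$. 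With that reformulation, the two properties quantify over the same class of input sequences and demand the same tracing conclusion (Besicovitch distance zero versus density-one $\vep$-closeness for all $\vep$, which are equivalent by the identity $\distB'=0 \Leftrightarrow$ density-one closeness and the uniform equivalence of $\distB$ and $\distB'$), and the equivalence of the two properties follows with no further construction.
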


    In \cite{KLO2}, it is shown that both the almost specification and the weak specification properties imply the asymptotic average shadowing property. 
    Therefore, we conclude that the vague specification property is one of the weakest forms of specification (see \cite{KLO, KLO2}) and we can complete the diagram above by adding the vague specification property in the following way.

    \begin{center}
    \begin{tikzpicture}[
    node distance=.7cm and 1.3cm,
    every node/.style={draw, rounded corners, minimum width=3cm, align=center},
    every path/.style={thick, -Implies}
    ]

    \node (spec) {specification};
    \node (periodic) [left=of spec, yshift=-.8cm] {periodic \\specification};
 
    \node (almost) [right= of spec, xshift=2mm] {almost\\specification};
    \node (weak) [right=of spec, yshift=-1.5cm, xshift=2mm] {weak \\specification};
    \node (periodicweak) [left=of weak, xshift=-.2cm] {periodic\\ weak specification};
    \node (vague) [right=of almost, yshift=-.7cm, xshift=-5mm] {vague\\specification};

    \draw (spec.east) edge[thick, double, -Implies] (almost.west);
    \draw (periodic) edge[thick, double, -Implies] (spec.west); 
    \draw (periodic) edge[thick, double, -Implies] (periodicweak.west);
    \draw (periodicweak) edge[thick, double, -Implies] (weak);
    \draw (spec.south east) edge[double, -Implies] (weak.north west);
    \draw (almost) edge[double, -Implies] (vague);
    \draw (weak) edge[double, -Implies] (vague);
    \end{tikzpicture}
    \end{center}

    Furthermore, while both the weak and the almost specification properties imply the existence of several pairwise disjoint invariant sets, we show that the vague specification property can be satisfied by proximal and minimal systems, which reinforces its weakness. 
    Indeed, we prove that the proximal example and the class of minimal shift spaces constructed in \cite{CKKK} possess the vague specification property.

    \begin{mainthm}\label{thm_minimal_proximal_intro}
        There are non-trivial minimal and non-trivial proximal systems which have the vague specification property. 
    \end{mainthm}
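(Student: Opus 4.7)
The plan is to combine Theorem~A with the paper's second main result (the equivalence of the asymptotic average shadowing property (AASP) and the ordinary average shadowing property (ASP) on surjective systems that are complete in the dynamical Besicovitch pseudometric). Concretely, I would exhibit the proximal example and the class of minimal subshifts constructed in \cite{CKKK} as witnesses for Theorem~B: by Theorem~A it suffices to show that each of them possesses AASP, so the goal is to establish AASP for these systems.

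To do this, I would first verify the hypotheses of the ASP--AASP equivalence on the \cite{CKKK} examples. The shift map on any closed shift-invariant subset of $\alf^{\infty}$ is surjective on its maximal forward-invariant subset, so the key point is Besicovitch-completeness. Since the Besicovitch pseudometric on a shift space is controlled by long-run word frequencies, the frequency constraints defining the \cite{CKKK} subshifts pass to Besicovitch-Cauchy limits; this makes the relevant subshifts closed, hence complete, in the Besicovitch pseudometric. It then suffices to verify the ordinary average shadowing property on each example.

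Next, I would establish ASP for both constructions. I would exploit the combinatorial structure of the \cite{CKKK} systems --- proximality, which supplies pairs of points sharing arbitrarily long coinciding blocks, and the syndetic recurrence built into the minimal examples --- to splice genuine orbits that track any prescribed average pseudo-orbit in average distance. Once ASP is established, the paper's second main result turns it into AASP, and Theorem~A turns AASP into the vague specification property.

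The main obstacle will be the direct verification of ASP: the \cite{CKKK} systems were constructed to be proximal or minimal with prescribed combinatorial features, not with shadowing-type conditions in mind, so the orbit-splicing argument must be tailored carefully to each construction, using the specific description of their admissible words. The remaining steps --- closure of the subshifts in the Besicovitch pseudometric, and the two equivalences --- are direct applications of the paper's earlier results.
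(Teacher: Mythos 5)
Your overall architecture matches the paper's: reduce the vague specification property to the asymptotic average shadowing property via Theorem~A, then reduce that to (average shadowing)~$+$~(Besicovitch/$\dbar$-completeness) via the surjective-system equivalence, and apply this to the \cite{CKKK} examples. But there is a genuine gap at the completeness step, which is precisely where the paper does its real work. You assert that ``the frequency constraints defining the \cite{CKKK} subshifts pass to Besicovitch-Cauchy limits,'' making them ``closed, hence complete.'' Neither example is defined by frequency constraints: the proximal shift $Z$ is a countable intersection of sofic shifts given by labelled graphs, and the minimal shifts are intersections of coded shifts built from nested concatenation codes $\SB_n$. Since $\dbar$ is only a pseudometric, ``closedness'' is not the right notion anyway; one must exhibit, for each $\dbar$-Cauchy sequence in $X$, an actual point of $X$ that is a $\dbar$-limit. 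The paper does this by hand in Theorems \ref{thm_proximal_complete} and \ref{thm_minimal_complete}: in the proximal case by splicing prefixes of auxiliary points $y^{(n)}$ (obtained by zeroing out coordinates in the sparse sets $E_n$) using synchronizing words $0^{m_n}$ for the hereditary sofic approximants $Y_n$, and in the minimal case by building a nested sequence of code words $w^{(j)}\in\SB_{m_j}$ that approximate $x^{(j)}$ on longer and longer prefixes. These constructions, with their careful density bookkeeping, are the content of Section~\ref{section_examples}; they cannot be replaced by a general closure argument.

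You also misplace the main obstacle. You propose to prove the average shadowing property for both examples from scratch by an orbit-splicing argument, and flag this as the hard part. In fact \cite{CKKK} already proves that these shift spaces have the $\dbar$-shadowing property (they were constructed for exactly that purpose), and the paper's Theorem~\ref{thm_ASP_dbar_intro} converts $\dbar$-shadowing into the average shadowing property for surjective shift spaces. So the shadowing half is essentially free from the literature plus Theorem~C, while the completeness half --- which you treat as routine --- is the part that requires a new argument.
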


    Theorems \ref{thm_AASP_VSP_intro} and \ref{thm_minimal_proximal_intro} originated from our studies on variants of the classical shadowing property. The shadowing property, also known as pseudo-orbit tracing property, played a crucial role in the development of the theory of hyperbolic dynamical systems due to the Shadowing Lemma, which is a key step in the proof of structural stability of hyperbolic sets (see \cite{Bowen,Katok-Hasselblatt}). Informally, the shadowing property, means that every pseudo-orbit (a sequence of points in the phase space such that every point is close to the image of the previous one) is traced by an actual orbit.

    Besides its applications to differentiable dynamics, the shadowing property became a subject of studies in topological dynamics. This led to the introduction of different notions of pseudo-orbits and tracing, resulting in variants of the shadowing property, such as the limit shadowing property \cite{limit_shadowing}, the average shadowing property \cite{Blank}, and the asymptotic average shadowing property \cite{Gu}.
    
    While the original notion of pseudo-orbit consists of small errors at each iterate, in the asymptotic average shadowing property, the type of pseudo-orbits considered requires the average of these errors to vanish in the limit. Similarly, the notion of tracing is also defined by the average distance between the iterates of a point and the terms of the pseudo-orbits vanishing in the limit. The investigation of the connection between the asymptotic average shadowing and the vague specification properties, which led to Theorem \ref{thm_AASP_VSP_intro}, was motivated by the fact that the concept of tracing considered in both properties, although differently stated, is actually the same. The notion of tracing used on the vague specification property is defined via the asymptotic density of natural numbers.

    The average shadowing property, introduced by Blank in \cite{Blank}, and recently revisited by himself in \cite{Blank2}, was an inspiration for the asymptotic average shadowing property. Thus, when studying the asymptotic average shadowing property, it is natural to ask its relation with the average shadowing property.
    It is already known that the asymptotic average shadowing property implies the average shadowing property. This result was first proved for surjective topological dynamical systems by Kulczycki, Kwietniak, and Oprocha in \cite{KKO}. Later, the result was improved by Wu, Oprocha and Chen in \cite{WOC}, who showed that the assumption of surjectivity could be dropped. In \cite{KKO}, one of the questions posed by the authors ask whether the converse is true (Question 10.3 in \cite{KKO}).

    In Theorem \ref{thm_AASP_ASP_intro} we provide a step toward answering Question 10.3 in \cite{KKO}, by showing that for a surjective topological dynamical system $(X,T)$, the asymptotic average shadowing property is equivalent to $(X,T)$ being complete with respect to the dynamical Besicovitch pseudometric and having the average shadowing property.
    Roughly speaking, the dynamical Besicovitch pseudometric, defined on the phase space of a topological dynamical system, measures the asymptotic average distance between the iterates of two initial points. It is worth noting that some authors define the dynamical Besicovitch pseudometric without naming it, and we decided to adopt this terminology in accordance with \cite{KLO2}.

    \begin{mainthm}\label{thm_AASP_ASP_intro}
        Let $(X,T)$ be a surjective topological dynamical system. Then $(X,T)$ has the asymptotic average shadowing property if and only if $(X,T)$ is Besicovitch complete and has the average shadowing property.
    \end{mainthm}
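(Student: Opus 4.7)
The forward implication combines the known result from \cite{KKO, WOC} (that AASP implies ASP) with a new argument showing AASP implies Besicovitch completeness. Given a $\distB$-Cauchy sequence $(y_n)$ in $X$, pass to a subsequence with $\distB(y_n, y_{n+1}) < 2^{-n}$ and construct an asymptotic average pseudo-orbit $\seqz$ by concatenating orbit segments: set $z_i = T^i(y_k)$ for $i \in [N_{k-1}, N_k)$, with $N_0 = 0$ and the block schedule $(N_k)$ chosen to grow geometrically and to satisfy, for all $l, m \le k$,
\[
\frac{1}{N_k}\sum_{i=0}^{N_k-1} \dist(T^i y_l, T^i y_m) < \distB(y_l, y_m) + 2^{-k}.
\]
Errors in $\seqz$ occur only at the finitely many block boundaries up to any time $n$, each bounded by $\mathrm{diam}(X)$, so $\seqz$ is genuinely an asymptotic average pseudo-orbit. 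By AASP there exists $x \in X$ asymptotically tracing $\seqz$.

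To confirm that $x$ is the $\distB$-limit of $(y_k)$, fix $k$ and bound
\[
\frac{1}{n}\sum_{i=0}^{n-1} \dist(T^i y_k, T^i x) \leq \frac{1}{n}\sum_{i=0}^{n-1} \dist(T^i y_k, z_i) + \frac{1}{n}\sum_{i=0}^{n-1} \dist(z_i, T^i x).
\]
The second term vanishes by tracing. The first is split over blocks: blocks $j < k$ contribute at most $\mathrm{diam}(X) \cdot N_{k-1}/n \to 0$, block $k$ contributes $0$, and each block $j > k$ contributes at most $N_j(\distB(y_k, y_j) + 2^{-j}) \le N_j(2^{-k+1} + 2^{-j})$ by the block condition combined with the Cauchy estimate. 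Geometric growth of $N_j$ then forces $\distB(y_k, x) = O(2^{-k})$, hence $\distB(y_k, x) \to 0$.

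For the backward implication, let $\seqz$ be an asymptotic average pseudo-orbit. Since $\limsup_N \frac{1}{N}\sum_{i<N}\dist(Tz_i, z_{i+1}) = 0$, $\seqz$ is a $\delta$-average pseudo-orbit for every $\delta > 0$. Fix $\vep_n \to 0$; by ASP there are points $x_n \in X$ whose orbits $\vep_n$-trace $\seqz$ in average. The triangle inequality yields
\[
\distB(x_m, x_n) \leq \limsup_N \frac{1}{N}\sum_{i<N}\dist(T^i x_m, z_i) + \limsup_N \frac{1}{N}\sum_{i<N}\dist(T^i x_n, z_i) \leq \vep_m + \vep_n,
\]
so $(x_n)$ is $\distB$-Cauchy. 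By Besicovitch completeness, some $x \in X$ satisfies $\distB(x_n, x) \to 0$, and the same triangle inequality gives $\limsup_N \frac{1}{N}\sum_{i<N}\dist(T^i x, z_i) \leq \distB(x, x_n) + \vep_n \to 0$, so $x$ asymptotically traces $\seqz$, establishing AASP.

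The technical heart of the argument is the forward direction: reconciling the $\limsup$ defining $\distB(y_k, x)$ with the $\lim$ supplied by AASP tracing forces the delicate three-fold condition on the block schedule $(N_k)$, ensuring that boundary errors average to zero, that the initial blocks are lower-order, and that on each later block the Birkhoff averages quantitatively approximate the limsup pseudo-distance rather than merely being dominated by it. The backward direction is, by contrast, essentially a soft argument: Besicovitch completeness provides exactly the abstraction needed to promote the family of $\vep_n$-tracings produced by ASP into the single asymptotic tracing required by AASP. Surjectivity enters only through the standing framework of \cite{KKO} that underpins the AASP-to-ASP step.
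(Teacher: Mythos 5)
Your backward implication has a genuine gap at its first step: you assert that an asymptotic average pseudo-orbit $\seqz$ is a $\de$-average pseudo-orbit for every $\de>0$ merely because $\limsup_{n\to\infty}\frac{1}{n}\sum_{i=0}^{n-1}\dist(T(z_i),z_{i+1})=0$. This is false. A $\de$-average pseudo-orbit (Definition \ref{delta-APO}) requires the windowed averages $\frac{1}{n}\sum_{i=0}^{n-1}\dist(T(z_{i+k}),z_{i+k+1})$ to be below $\de$ for all $n\geq N$ \emph{uniformly in the starting index} $k\in\N_0$, and a single Ces\`aro condition anchored at $k=0$ does not deliver this: arrange $\dist(T(z_i),z_{i+1})=1$ exactly for $i\in[m!,m!+m)$, $m\in\N$, and $0$ otherwise (easily realized in the full shift). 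The bad indices have density zero, so $\seqz$ is an asymptotic average pseudo-orbit, yet for every $N$ the window with $k=m!$ and $n=m\geq N$ has average $1$, so $\seqz$ is not a $\de$-average pseudo-orbit for any $\de<1$. Consequently you cannot invoke the average shadowing property on $\seqz$ directly. Only \emph{asymptotic} pseudo-orbits, whose individual errors tend to $0$, are $\de$-average pseudo-orbits for every $\de$ (Proposition \ref{limiting-p.o-vs-aver}); the paper therefore first replaces $\seqz$ by an asymptotic pseudo-orbit at Besicovitch distance zero via Proposition \ref{AAPO_asympPO}, which needs chain mixing, and this is where surjectivity actually enters (through Lemma 3.1 of \cite{KKO}, given the average shadowing property). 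Your closing remark that surjectivity is needed only for the AASP-to-ASP step is thus also misplaced: by \cite{WOC} that step requires no surjectivity, whereas the converse direction does. Once this repair is made, the rest of your backward argument (the Cauchy family of tracing points, completeness, and the triangle inequality) coincides with the paper's.

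Your forward direction is sound and genuinely different from the paper's, which simply cites Corollary 4.2 of \cite{WOC} and Corollary 30 of \cite{LK17}; your block-concatenation construction gives a self-contained proof that the asymptotic average shadowing property implies Besicovitch completeness, at the cost of the bookkeeping on the schedule $(N_k)$. Two small points: the boundary errors average to zero precisely because the geometric growth of $N_k$ makes the number of block boundaries below $n$ of order $\log n$, so that hypothesis is essential and not cosmetic; and having produced a $\distB$-limit of the chosen subsequence, you should add the one-line observation that a Besicovitch-Cauchy sequence with a convergent subsequence converges to the same point.
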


    Although we believe that Theorem \ref{thm_AASP_ASP_intro} is interesting on its own, we also stress that this result is crucial for the proof of Theorem \ref{thm_minimal_proximal_intro}. In fact, we use an equivalent version of Theorem \ref{thm_AASP_ASP_intro} for shift spaces, that is, subsystems of the full shift.
    Several dynamical properties, such as topological mixing and the specification property, have simpler formulations in the context of shift spaces. 
    Konieczny, Kupsa, and Kwietniak \cite{KKK} introduced the $\dbar$-shadowing property, a variant of the shadowing property for shift spaces that is strongly related to the average shadowing property. Indeed, the average shadowing property implies the $\dbar$-shadowing property, and the converse holds for surjective shift spaces. These relations were claimed without a proof in \cite{KKK}, as we rely on this equivalence in our work, for the sake of completeness, we present a proof for these statements.

    \begin{mainthm}\label{thm_ASP_dbar_intro}
        Let $X$ be a surjective shift space. Then $X$ has the average shadowing property if and only if $X$ has the $\dbar$-shadowing property.
    \end{mainthm}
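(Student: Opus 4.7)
The plan is to unpack both notions directly in the shift-space setting and produce an explicit translation between the corresponding pseudo-orbits. An \emph{average pseudo-orbit} is a sequence $(z_n)_{n\in\N} \subset X$ with $\lim_N \frac{1}{N}\sum_{n<N} \dist(T z_n, z_{n+1}) = 0$, and its shadow $y \in X$ must satisfy Cesàro control of $\dist(T^n y, z_n)$. A $\dbar$-pseudo-orbit is a sequence $(x^{(n)})_{n\in\N} \subset X$ with $\dbar(T x^{(n)}, x^{(n+1)}) \to 0$, and its shadow $y$ must satisfy $\dbar(T^n y, x^{(n)}) \to 0$. The two types of pseudo-orbit live on different scales: vanishing Cesàro means of single-step errors versus asymptotic-density control of coordinate discrepancies.

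For the implication ``average shadowing $\Rightarrow$ $\dbar$-shadowing'', I would take a $\dbar$-pseudo-orbit $(x^{(n)})$, pick a rapidly growing block-length sequence $L_n$, and concatenate finite orbit segments $x^{(n)}, T x^{(n)}, \ldots, T^{L_n - 1} x^{(n)}$ into a single sequence $(z_k) \subset X$. The only contributions to the Cesàro mean of $\dist(T z_k, z_{k+1})$ come from the junctions, where $T^{L_n} x^{(n)}$ is compared with $x^{(n+1)}$. Because $\dbar$-closeness at the junction translates into a small fraction of mismatched coordinates in any sufficiently long window, and because junctions are sparse compared to the $L_n$, these contributions vanish on average. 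An average-shadowing point $y$ for $(z_k)$ then $\dbar$-shadows $(x^{(n)})$, since $\dbar$ itself is a limsup of Cesàro averages of coordinate discrepancies.

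For the converse, given an average pseudo-orbit $(z_n) \subset X$, I would split $\N$ into long intervals $I_n$ on which the running average of one-step errors stabilizes near zero, and for each $n$ use surjectivity of $T$ on $X$ to lift the finite word read along $(z_k)_{k\in I_n}$ to a genuine point $x^{(n)} \in X$ whose initial coordinates reproduce that word. Surjectivity is precisely what supplies the preimages needed to extend the finite blocks to an infinite sequence inside $X$. A direct estimate then shows that consecutive $x^{(n)}$ are $\dbar$-close after shifting, so they form a $\dbar$-pseudo-orbit; a $\dbar$-shadowing point for them is, by construction, an average-shadowing point for $(z_n)$.

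The main obstacle will be the bookkeeping in the backward direction: converting the single-step Cesàro condition into control of coordinate-wise densities over long prefixes requires choosing block lengths $|I_n|$ so that both the interior average error and the junction mismatch decay, and surjectivity must be threaded carefully to keep the lifted sequences $x^{(n)}$ inside $X$. The forward direction is essentially routine once the concatenation is in place, since the shift dynamics ensure that per-coordinate comparisons pass smoothly between $\dist(T^k y, z_k)$ and $\dbar(T^n y, x^{(n)})$.
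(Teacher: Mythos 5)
The central problem is that you have misread the definition of the $\dbar$-shadowing property, so your whole translation scheme is aimed at the wrong target. The property is \emph{not} about sequences $(x^{(n)})\subset X$ with $\dbar(\si(x^{(n)}),x^{(n+1)})\to 0$ traced by a point $y$ with $\dbar(\si^n(y),x^{(n)})\to 0$. It says: for every $\vep>0$ there is $N$ such that for \emph{any} sequence of words $w^{(1)},w^{(2)},\ldots\in\lang(X)$ with $|w^{(i)}|\ge N$, the single infinite concatenation $w=w^{(1)}w^{(2)}\cdots\in\alf^\infty$ (which need not lie in $X$) satisfies $\dbar(w,x)<\vep$ for some $x\in X$. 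Neither direction of your argument engages with this statement. The correct forward direction takes such a concatenation $w$, chooses for each coordinate $n$ a point of $X$ whose initial segment is the remaining suffix of the block containing position $n$, and verifies that the resulting sequence in $X^\infty$ is a $\de$-average pseudo-orbit which is $\dist_B$-close to the $\si$-orbit of $w$ (the only bad positions are those within $m$ of a block end, a set of small density when $N$ is large); average shadowing traces that pseudo-orbit, and the uniform equivalence of $\dist_B$ and $\dbar$ converts the conclusion back.

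In the converse direction there is a second, independent gap even on your own terms. A $\de$-average pseudo-orbit only has small \emph{average} one-step error; individual steps can be arbitrarily bad, so the finite word read off a block $(z_k)_{k\in I_n}$ need not belong to $\lang(X)$, and then no point of $X$ can ``reproduce that word.'' The paper's proof avoids this by first using surjectivity to obtain chain mixing (Lemma 9 in \cite{KKK}) and then invoking Theorem 3.6 in \cite{KKO}, which for chain mixing systems reduces average shadowing to the tracing of ordinary $\de$-pseudo-orbits; taking $\de\le 2^{-m}$ forces consecutive points of the pseudo-orbit to agree on their first $m$ coordinates after shifting, so cutting into blocks of length $m$ genuinely produces words of $\lang_m(X)$. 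Note also that surjectivity is not what lets you realize a word of the language as a prefix of a point of $X$ (that is automatic from the definition of $\lang(X)$); its actual role in this theorem is to supply chain mixing, without which the reduction to ordinary pseudo-orbits, and hence your backward direction, does not go through.
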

    
    We recall that the $\dbar$ and the Besicovitch pseudometrics are uniformly equivalent on shift spaces (see \cite{KLO2}). As a consequence, Theorem \ref{thm_AASP_ASP_intro} implies that, for surjective shift spaces, if we assume completeness with respect to the $\dbar$ pseudometric, then the $\dbar$-shadowing and the asymptotic average shadowing properties are equivalent. In particular, from Theorem \ref{thm_AASP_VSP_intro}, we obtain the following corollary.

    \begin{cor}
      Let $X$ be a surjective shift space. Then $X$ has the vague specification property if and only if $X$ is $\dbar$-complete and has the $\dbar$-shadowing property.
    \end{cor}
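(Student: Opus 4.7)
The plan is to chain together the three main theorems stated in the introduction, using the uniform equivalence of the $\dbar$ and Besicovitch pseudometrics on shift spaces as the bridge. Since $X$ is a surjective shift space, all the hypotheses required by Theorems \ref{thm_AASP_VSP_intro}, \ref{thm_AASP_ASP_intro}, and \ref{thm_ASP_dbar_intro} are in place.

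First, I would apply Theorem \ref{thm_AASP_VSP_intro} to replace the vague specification property with the asymptotic average shadowing property: $X$ has the vague specification property if and only if $X$ has the asymptotic average shadowing property. Next, using surjectivity, Theorem \ref{thm_AASP_ASP_intro} rewrites the asymptotic average shadowing property as the conjunction of Besicovitch completeness and the average shadowing property. Then, Theorem \ref{thm_ASP_dbar_intro} replaces the average shadowing property with the $\dbar$-shadowing property, again using the surjectivity of $X$.

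The final step is to replace Besicovitch completeness with $\dbar$-completeness. Since $\dbar$ and the dynamical Besicovitch pseudometric are uniformly equivalent on shift spaces (as recalled just before the corollary and proved in \cite{KLO2}), Cauchy sequences and their limits coincide for the two pseudometrics, so $X$ is Besicovitch complete if and only if $X$ is $\dbar$-complete. Concatenating the four equivalences yields the claim.

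There is no real obstacle here: the corollary is purely a packaging statement, and the only point that requires a brief justification is that uniform equivalence of pseudometrics preserves completeness, which is standard. I would write the proof as a short sequence of ``if and only if'' steps, each citing the appropriate theorem or the uniform equivalence, rather than expanding any of the underlying definitions.
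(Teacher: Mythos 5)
Your proposal is correct and matches the paper's own derivation: the corollary is obtained exactly by chaining Theorem \ref{thm:AASP-VSP} (vague specification $\Leftrightarrow$ asymptotic average shadowing), Theorem \ref{equiv_AASP_ASP+Besicovitch}, Theorem \ref{thm_ASP_dbar_shadowing} and its converse for surjective shift spaces, and the uniform equivalence of the $\dbar$ and Besicovitch pseudometrics to identify the two notions of completeness. Nothing is missing.
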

    
    In \cite{CKKK}, the authors constructed a proximal shift space and a class of minimal shift spaces with the $\dbar$-shadowing property. The authors acknowledged Piotr Oprocha for the ideas that led to the construction of the minimal shift spaces. We prove that these examples are $\dbar$-complete, and therefore, as they are surjective, they have the vague specification property which proves Theorem \ref{thm_minimal_proximal_intro}. 
    
    Specification-like properties are the most common tools used to prove entropy-density of ergodic measures in the space of invariant measures (see \cite{KLO, Pavlov, Sigmund} for definitions and details). The proximal example and the class of minimal shift spaces constructed in \cite{CKKK} are shown to have ergodic measures entropy-dense in the space of invariant measures. The surprise about such systems was that they do not seem to manifest any specification-like property and, therefore, different strategies are used to obtain the result. However, our results confirm that all these examples possess the vague specification property, which suggests that the vague specification property may imply entropy-density of ergodic measures.
    
    Lastly, we also use the equivalence of the vague specification and the asymptotic average shadowing properties to answer other questions about the latter property raised in \cite{KKO}. Indeed, the minimal shift space from Theorem \ref{thm_minimal_proximal_intro} answers Question 10.2 in \cite{KKO}, confirming that systems with the asymptotic average shadowing property may be minimal. Moreover, we can use Kamae's result (Proposition 5 in \cite{Kamae}), which shows that the vague specification property is inherited by factors, to address Question 10.6 in \cite{KKO}, confirming that the asymptotic average shadowing property is inherited by factors.

    Here is the organization of the paper. In Section \ref{section_preliminaries}, we present some basic notions and notation used throughout this work. In Section \ref{section_pseudo_orbits}, we analyze different notions of pseudo-orbits and the relations between them, and we prove the equivalence between asymptotic average pseudo-orbits and vague pseudo-orbits, which is the key step to prove Theorem \ref{thm_AASP_VSP_intro}. In Section \ref{section_tracing_properties}, we prove Theorems \ref{thm_AASP_VSP_intro} and \ref{thm_AASP_ASP_intro}. In Section \ref{section_shift_spaces}, we present the basic definitions of shift spaces, we study the $\dbar$ pseudometric and $\dbar$-shadowing property, and we prove Theorem \ref{thm_ASP_dbar_intro}. Section \ref{section_examples} is devoted to the proof of Theorem \ref{thm_minimal_proximal_intro}. Finally, in Section \ref{section_applications}, we discuss some consequences of our results. 
    
    \section{Preliminaries}\label{section_preliminaries}
	We write $\N = \{1,2,\ldots\}$ and $\N_0 = \N \cup \{0\}$. For $A \subseteq \N_0$ and $i \in \N_0$, we write $A-i = \{n \in \N_0 : n+i \in A\}$. For every $A \subset \N_0$, we denote by $|A| \in \N_0 \cup \{\infty\}$ the cardinality of $A$. For $n,m \in \N_0$ we write $[n,m) = \{i\in \N_0 : n \leq i < m \}$, $(n,m] = \{i\in \N_0 : n <i \leq m \}$  and $[n,m] = \{i\in \N_0 : n \leq i \leq m \}$.

    The \textbf{upper asymptotic density} of $A \subset \N_0$ is defined as
    $$\dbar(A) = \limsup_{n \to \infty} \frac{|A \cap [0,n)|}{n}.$$
 
    The \textbf{lower asymptotic density} of $A \subset \N_0$ is defined as
    $$\underline{d}(A) = \liminf_{n \to \infty} \frac{|A \cap [0,n)|}{n}.$$

    In case $\dbar(A) = \underline{d}(A)$, we define the \textbf{asymptotic density} of $A \subset \N_0$ by
    $$d(A) = \dbar(A) =\underline{d}(A).$$
    
    Throughout this work we assume that $X$ is a compact metrizable topological space and $\dist$ is a compatible metric on $X$. For simplicity, we assume that the diameter of $X$ with respect to $\dist$ is at most $1$. Our results do not depend on the choice of $\dist$.
	
    We denote by $X^{\infty}$ the family of all $X$-valued sequences indexed by $\N_0$, that is, 
    \[
        X^\infty = \left\{ \left\{x_n\right\}_{n=0}^\infty : x_n\in X \text{ for all }n \in \N_0\right\}.
    \]
    Typically, we write $\seqx = \{x_n\}_{n=0}^\infty$ for the elements of $X^{\infty}$. We denote by $S$ the shift operator acting on $X^\infty$, that is, $S(\seqx) = \{x_{n+1}\}_{n=0}^\infty$.
    We always endow $X^{\infty}$ with the product topology, which is compatible with the metric
    $$\dist^\infty(\seqx,\seqz) = \sum_{n=0}^\infty \frac{\dist(x_n,z_n)}{2^{n+1}}.$$

    Note that the diameter of $X^\infty$ with respect to $\dist^\infty$ is again at most $1$. Immediately from the definition of $\dist^\infty$ we obtain the following result.
    \begin{prop}
        Let $\vep >0$ and $k \in \N$. If $0<\de<\frac{\vep}{2^k}$ and $\dist^\infty(\seqx,\seqz) < \de$ for $\seqx, \seqz \in X$, then
        $$\max_{0\leq n <k} \dist(x_n, z_n) < \vep.$$
    \end{prop}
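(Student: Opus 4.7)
The plan is to extract a term-by-term bound from the series defining $\dist^\infty$. Since every summand in
$$\dist^\infty(\seqx,\seqz) = \sum_{n=0}^\infty \frac{\dist(x_n,z_n)}{2^{n+1}}$$
is nonnegative, each individual term is at most the whole sum. So for every $n \in \N_0$ we have
$$\frac{\dist(x_n,z_n)}{2^{n+1}} \leq \dist^\infty(\seqx,\seqz) < \de,$$
and hence $\dist(x_n,z_n) < 2^{n+1}\de$.

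Now I would substitute the bound $\de < \vep/2^k$ to obtain $\dist(x_n,z_n) < 2^{n+1-k}\vep$. For $n$ in the range $0 \leq n < k$ we have $n+1-k \leq 0$, so $2^{n+1-k} \leq 1$, and consequently $\dist(x_n,z_n) < \vep$. Taking the maximum over $n \in [0,k)$ gives the desired inequality.

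There is no real obstacle here; this is a one-line consequence of the fact that every individual term of a convergent nonnegative series is dominated by the total sum, combined with the choice of $\de$ which absorbs the worst-case factor $2^k$ coming from the largest index $n = k-1$.
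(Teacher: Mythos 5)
Your argument is correct and is exactly the intended one: the paper states this proposition as an immediate consequence of the definition of $\dist^\infty$ and gives no separate proof, and your term-by-term bound $\dist(x_n,z_n) \le 2^{n+1}\dist^\infty(\seqx,\seqz) < 2^{n+1-k}\vep \le \vep$ for $0 \le n < k$ is precisely that immediate consequence.
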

    
	A pair $(X,T)$ is said to be a \textbf{topological dynamical system} if $T \colon X \to X$ is a continuous map. The \textbf{orbit} of a point $x \in X$ under $T$ is usually defined as the set $\{T^n(x) \in X : n \in \N_0\}$ but, for our purposes, it is more convenient to treat orbits as sequences and we write $\orbx=\{T^n(x)\}_{n=0}^{\infty} \in X^\infty$. We write $\Ga_T = \{\seqx_T \in X^\infty : x \in X\}$ for the space of all orbits under $T$. It is not hard to see that $\Ga_T$ is a closed subspace of $X^\infty$, and therefore, $\Ga_T$ is compact.

    The \textbf{Besicovitch pseudometric} on $X^\infty$ is defined for $\seqx,\seqz \in X^\infty$ as
	$$\dist_B(\seqx, \seqz) = \limsup_{n \to \infty} \frac{1}{n}\sum_{i=0}^{n-1}\dist(x_i, z_i).$$

	Another pseudometric on $X^\infty$ is given for $\seqx,\seqz \in X^\infty$ by
 	\begin{align*}
		\dist_B'(\seqx, \seqz) &= \inf \left\{\vep >0 : \dbar(\{ n \in \N_0 : \dist(x_n, z_n) \geq \vep\}) < \vep\right\} \\ &=\inf \left\{\vep >0 : \underline{d}(\{ n \in \N_0 : \dist(x_n, z_n) < \vep\}) \geq 1 -\vep\right\}.
	\end{align*}

    \begin{remark}\label{equivalence_pseudometrics}
        It is easy to see that the pseudometrics $\dist_B$ and $\dist_B'$ on $X^\infty$ are uniformly equivalent (see Lemma 2 in \cite{KLO2} for a proof).
    \end{remark}
    From the definition of $\dist_B'$ we obtain the following equivalence.

    \begin{prop}\label{lem:eq-of-besicovitch-and-density}
		Let $\seqx, \seqz \in X^\infty$. Then $\dist_B' (\seqx, \seqz)=0$ if and only if for every $\vep >0$ we have
		$$d(\{n \in \N_0 : \dist(x_n, z_n)< \vep\})= 1.$$
	\end{prop}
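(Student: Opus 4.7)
The plan is to prove both implications directly from the definition of $\dist_B'$, exploiting the fact that the thresholds on both sides can be made arbitrarily small. The key observation is that, although $\dist_B'(\seqx,\seqz)=0$ only requires control of the lower density of "small error" sets at arbitrarily small thresholds $\vep'$ with tolerance $1-\vep'$, the monotonicity of density with respect to set inclusion upgrades this to density exactly $1$ at every positive threshold $\vep$.

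For the forward implication, I would assume $\dist_B'(\seqx,\seqz)=0$ and fix an arbitrary $\vep>0$. Since the infimum defining $\dist_B'$ equals $0$, for any $\eta \in (0,\vep)$ there exists $\vep' \in (0,\eta)$ with $\underline{d}(\{n : \dist(x_n,z_n)<\vep'\}) \geq 1-\vep'$. Because $\vep' < \vep$, the set $\{n : \dist(x_n,z_n)<\vep'\}$ is contained in $\{n : \dist(x_n,z_n)<\vep\}$, so the lower density of the latter is at least $1-\vep' > 1-\eta$. Since $\eta \in (0,\vep)$ was arbitrary, letting $\eta \to 0$ yields $\underline{d}(\{n : \dist(x_n,z_n)<\vep\}) = 1$, and hence $d(\{n : \dist(x_n,z_n)<\vep\}) = 1$.

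For the backward implication, assuming the density condition, for every $\vep > 0$ we have $\underline{d}(\{n : \dist(x_n,z_n)<\vep\}) = 1 \geq 1-\vep$. This shows that $\vep$ belongs to the set whose infimum defines $\dist_B'(\seqx,\seqz)$, so $\dist_B'(\seqx,\seqz) \leq \vep$. Letting $\vep \to 0$ gives $\dist_B'(\seqx,\seqz) = 0$.

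I do not anticipate any serious obstacle; the argument is essentially bookkeeping with thresholds and monotonicity of upper and lower densities under inclusion. The only subtle point is ensuring that in the forward direction the chosen $\vep'$ is strictly smaller than the target $\vep$, which is handled by first shrinking the ambient tolerance $\eta$ below $\vep$.
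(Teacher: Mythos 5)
Your proof is correct and follows essentially the same route as the paper: both directions come directly from the definition of $\dist_B'$ together with monotonicity of lower density under set inclusion, with the forward direction using an auxiliary threshold strictly below the target $\vep$ exactly as the paper does with its $\delta<\vep$. Your version is just slightly more explicit about extracting $\vep'$ from the infimum.
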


    \begin{proof}
        Fix $\seqx, \seqz \in X^\infty$. Assume that for every $\vep>0$ we have $$d(\{n \in \N : \dist(x_n, z_n)< \vep\})= 1.$$ Then by definition $\dist_B'(\seqx, \seqz)=0$.
		
        To prove the converse, assume that $\dist_B'(\seqx, \seqz)=0$ and fix $\vep>0$. We observe that for each $0<\delta<\vep$ we have
		$$\{ n \in \N : \dist(x_n, z_n) < \delta\} \subseteq\{ n \in \N : \dist(x_n, z_n) < \vep\}.$$
        Fix $0<\delta<\vep$. Since $\dist_B' (\seqx, \seqz) < \delta$, one has
		\begin{equation}\label{eq_1}
		    1- \delta \leq \underline{d}(\{ n \in \N : \dist(x_n, z_n) < \delta\}) \leq \underline{d}(\{ n \in \N : \dist(x_n, z_n) < \vep\}) .
		\end{equation}	
        Since \eqref{eq_1} holds for every $0<\de<\vep$, we conclude that
		\[d(\{ n \in \N : \dist(x_n, z_n) < \vep\})=1. \qedhere\]
    \end{proof}

    Given a topological dynamical system $(X,T)$, the Besicovitch pseudometric on $X^\infty$ induces the \textbf{dynamical Besicovitch pseudometric} on $X$ which is defined for $x,z \in X$ as
    $$\distBT(x,z) = \dist_B(\orbx,\orbz).$$
    
    We define the notions of Cauchy sequence and completeness with respect to the dynamical Besicovitch pseudometric on $X$ in the usual way.
	\begin{defi} Let $(X,T)$ be a topological dynamical system.
		We say that a sequence $\{z^{(n)}\}_{n=1}^\infty \subset X$ is \textbf{Besicovitch-Cauchy} if for every $\vep>0$ there exists $N \in \N$ such that for every $n,m \geq N$ we have
		$$\distBT(z^{(n)},z^{(m)}) <\vep.$$
	\end{defi}
 
	\begin{defi}
		We say that a topological dynamical system $(X,T)$ is \textbf{Besicovitch complete} if for every Besicovitch-Cauchy sequence $\{z^{(n)}\}_{n=1}^{\infty} \subset X$ there exists $z \in X$ (not necessarily unique) such that
		$$\lim_{n \to \infty }\distBT(z^{(n)},z) = 0.$$
	\end{defi}

    \begin{remark}
        The notion of Cauchy sequences and completeness for the Besicovitch pseudometric on $X^\infty$ could also be defined accordingly, and replicating the proof of Proposition 2 in \cite{BFK} we see that $X^\infty$ is always Besicovitch complete.
    \end{remark}
	
    \section{Pseudo-orbits in topological dynamical systems}\label{section_pseudo_orbits}
    Usually, pseudo-orbits are defined together with their corresponding shadowing-like properties. In our case, we will analyze some pseudo-orbits on their own, apart from their corresponding tracing properties. To avoid unnecessary definitions, we decided to devote this section to generalizations of pseudo-orbits and their relations.

    Originally, pseudo-orbits were defined as sequences of points such that, starting from the second point in the sequence, every point is close to the image of the previous one. Roughly speaking, a pseudo-orbit is distinguished from an actual orbit only by small errors at each iteration. 

    \begin{defi}\label{def_pseudoorbit}
    A sequence $\seqx\in X^\infty$ is called a \textbf{$\de$-pseudo-orbit} in $(X,T)$ if $\dist(T(x_n),x_{n+1})<\de$ for every $n \in \N_0.$
    \end{defi}
    
    Allowing the errors to be small in a different sense than the uniform one, such as vanishing in the limit or on average, lead to definitions of different types of pseudo-orbits (we refer to \cite{Blank,limit_shadowing, Pilyugin, Gu} for more details). One of these generalized notions is the asymptotic pseudo-orbit.

    \begin{defi}\label{aysmptotic-p.o}
        A sequence $\seqx\in X^\infty$ is called an \textbf{asymptotic pseudo-orbit} in $(X,T)$ if 
        \[
            \lim\limits_{n\to\infty}\dist(T(x_n),x_{n+1})=0.
        \]
    \end{defi}
    
    Blank \cite{Blank} analyzed pseudo-orbits, nowadays called average pseudo-orbits, in which the average of the errors becomes small in the long term. 
    
    \begin{defi}\label{delta-APO}
		A sequence $\seqx \in X^\infty$ is said to be a \textbf{$\de$-average pseudo-orbit} in $(X,T)$ if there exists $N \in \N$ such that for every $n \geq N$ and $k \in \N_0$ it holds that $$ \frac{1}{n}\sum_{i=0}^{n-1}\dist(T(x_{i+k}), x_{i+k+1}) < \de.$$ 
	\end{defi}

    From Definitions \ref{aysmptotic-p.o} and \ref{delta-APO} it is not hard to see that asymptotic pseudo-orbits are $\de$-average pseudo-orbits for any $\de>0$.
    
    \begin{prop}\label{limiting-p.o-vs-aver}
        If $\seqx\in X^\infty$ is an asymptotic pseudo-orbit in $(X,T)$, then $\seqx$ is a $\delta$-average pseudo-orbit in $(X,T)$ for every $\delta>0$.
    \end{prop}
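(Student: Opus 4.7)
The plan is to exploit the fact that an asymptotic pseudo-orbit has error terms tending to zero, which lets us bound tail averages uniformly in the starting index $k$. The only potential obstacle is the shift parameter $k$, since the supremum is taken over all $k \in \N_0$, but because we only need the errors $a_m := \dist(T(x_m), x_{m+1})$ to be small from some point on, the initial block of ``bad'' indices is bounded, and shifting past it only helps.

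Concretely, set $a_m = \dist(T(x_m), x_{m+1})$, so $a_m \leq 1$ for all $m$ (since $\mathrm{diam}(X) \leq 1$) and $a_m \to 0$. I would fix $\delta > 0$ and first choose $M \in \N$ such that $a_m < \delta/2$ for all $m \geq M$. Then I would split into two cases according to whether $k \geq M$ or $k < M$.

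If $k \geq M$, every summand in $\sum_{i=0}^{n-1} a_{i+k}$ is below $\delta/2$, so the average is strictly less than $\delta$ for any $n \geq 1$. If $k < M$, I would decompose
\[
\sum_{i=0}^{n-1} a_{i+k} \;=\; \sum_{j=k}^{M-1} a_j \;+\; \sum_{j=M}^{k+n-1} a_j,
\]
valid whenever $k+n-1 \geq M$. The first sum contains at most $M$ terms, each bounded by $1$, so is at most $M$; the second sum has at most $n$ terms, each below $\delta/2$, so is at most $n\delta/2$. Hence the average is at most $M/n + \delta/2$.

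Choosing $N \in \N$ with $N > M$ and $M/N < \delta/2$ ensures that for every $n \geq N$ the condition $k+n-1 \geq M$ is automatic when $k < M$, and the average is strictly less than $\delta$ in both cases uniformly in $k$. This gives the required $N$ in Definition \ref{delta-APO}, completing the proof. The only subtlety worth stating carefully is ensuring that the same $N$ works for all $k$ simultaneously, which is handled by the dichotomy above.
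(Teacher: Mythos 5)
Your proof is correct. The paper does not actually supply an argument here (it states the proposition as an immediate consequence of Definitions \ref{aysmptotic-p.o} and \ref{delta-APO}), and your dichotomy on $k \geq M$ versus $k < M$, using the bound $\mathrm{diam}(X)\leq 1$ to absorb the finitely many large error terms into an $M/n$ contribution, is exactly the routine verification being omitted; the uniformity in $k$ is handled properly.
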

        
    Later, Gu \cite{Gu} studied the case where the average of errors vanishes in the limit.

    \begin{defi}\label{AAPO}
		A sequence $\seqx \in X^\infty$ is said to be an \textbf{asymptotic average pseudo-orbit} in $(X,T)$ if$$\limsup_{n \to \infty} \frac{1}{n}\sum_{i=0}^{n-1}\dist(T(x_i), x_{i+1}) = 0.$$ 
	\end{defi}

    Combining Remark \ref{equivalence_pseudometrics} and Proposition \ref{lem:eq-of-besicovitch-and-density} we obtain the following result.
    
    \begin{prop}\label{prop:equiv-of-aapo}
        A sequence $\seqx \in X^\infty$ is an asymptotic average pseudo-orbit in $(X,T)$ if and only if for every $\vep >0$ the following holds $$d(\{n \in \N_0 : \dist(T(x_n), x_{n+1})< \vep\})= 1.$$ 
    \end{prop}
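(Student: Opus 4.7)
The plan is to recast the asymptotic average pseudo-orbit condition as the vanishing of the Besicovitch pseudometric between two auxiliary sequences and then invoke the two results the author signals. Concretely, set $\seqw = \{T(x_n)\}_{n=0}^\infty$ and observe that $S(\seqx) = \{x_{n+1}\}_{n=0}^\infty$ is also an element of $X^\infty$. From the definition of $\dist_B$ one reads off
\[
    \dist_B(\seqw, S(\seqx)) = \limsup_{n\to\infty} \frac{1}{n}\sum_{i=0}^{n-1}\dist(T(x_i),x_{i+1}),
\]
so Definition \ref{AAPO} asserts exactly that $\seqx$ is an asymptotic average pseudo-orbit if and only if $\dist_B(\seqw, S(\seqx)) = 0$.

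Next, I would apply Remark \ref{equivalence_pseudometrics}, which tells us that $\dist_B$ and $\dist_B'$ are uniformly equivalent on $X^\infty$; in particular one vanishes exactly when the other does. Thus $\dist_B(\seqw, S(\seqx)) = 0$ is equivalent to $\dist_B'(\seqw, S(\seqx)) = 0$. Finally, Proposition \ref{lem:eq-of-besicovitch-and-density} applied to the pair $(\seqw, S(\seqx))$ yields that this last equality is in turn equivalent to
\[
    d\bigl(\{n \in \N_0 : \dist(T(x_n),x_{n+1}) < \vep\}\bigr) = 1 \qquad \text{for every } \vep>0,
\]
since the $n$-th coordinates of $\seqw$ and $S(\seqx)$ are $T(x_n)$ and $x_{n+1}$, respectively. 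Chaining the three equivalences delivers the proposition.

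There is no real obstacle here: the statement is essentially a dictionary entry translating the $\limsup$-of-Cesaro formulation of $\dist_B$ into the density-based formulation of $\dist_B'$ for the specific pair of sequences arising from a candidate pseudo-orbit. The only bookkeeping worth doing carefully is to keep straight which indices are being compared (so that the $i$-th error $\dist(T(x_i),x_{i+1})$ matches the $i$-th coordinate distance between $\seqw$ and $S(\seqx)$), which is immediate from the definitions.
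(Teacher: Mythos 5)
Your proposal is correct and is precisely the argument the paper intends: the authors state the proposition as an immediate consequence of "combining Remark \ref{equivalence_pseudometrics} and Proposition \ref{lem:eq-of-besicovitch-and-density}," applied exactly to the pair of sequences $\{T(x_n)\}_{n=0}^\infty$ and $S(\seqx)=\{x_{n+1}\}_{n=0}^\infty$ as you do. The index bookkeeping you flag is handled correctly, so nothing is missing.
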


     Asymptotic pseudo-orbits and asymptotic average pseudo-orbits are different.
    However, if a topological dynamical system is chain mixing, then given an asymptotic average pseudo-orbit one can obtain an asymptotic pseudo-orbit by making small modifications.

    \begin{defi}
        We say that a topological dynamical system $(X,T)$ is \textbf{chain mixing} if for any $\de>0$ and $x,y \in X$ there is $N \in \N$ such that for every $n \in \N$ satisfying $n \geq N$ there exists a sequence $\{x_i\}_{i=0}^n \subset X$ with $x_0= x$ and $x_n = y$ so that $\dist(T(x_i),x_{i+1})<\de$ for all $i \in [0,n)$.
    \end{defi}
    
    \begin{prop}[Lemma 3.3 in \cite{KKO}]\label{AAPO_asympPO}
        Let $(X,T)$ be a chain mixing topological dynamical system. 
        For every asymptotic average pseudo-orbit $\seqx \in X^\infty$ there exists an asymptotic pseudo-orbit $\seqz \in X^\infty$ such that
        $$d\left( \left\{n\in \N_0 : x_n \neq z_n\right\} \right) = 0.$$
        In particular, we have $\dist_B(\seqx,\seqz) = 0$.
    \end{prop}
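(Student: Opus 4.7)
The plan is to use Proposition \ref{prop:equiv-of-aapo} to translate ``asymptotic average pseudo-orbit'' into a density statement: for each $k \in \N$, the set $A_k := \{n \in \N_0 : \dist(T(x_n), x_{n+1}) \geq 1/k\}$ has asymptotic density zero. Chain mixing will then allow me to replace short segments of $\seqx$ around the bad indices by $(1/k)$-pseudo-orbits, block by block, in such a way that the modifications occupy a set of density zero while all new transition errors at scale $k$ are forced below $1/k$.

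First I would upgrade chain mixing to a uniform statement via compactness of $X$ and uniform continuity of $T$: for each $k$ there is $L_k \in \N$ such that, for every $a, b \in X$ and every $m \geq L_k$, a $(1/k)$-pseudo-orbit of length $m$ from $a$ to $b$ exists. (One covers $X$ by finitely many $\eta$-balls for a suitable $\eta < 1/(3k)$, applies pointwise chain mixing between the finitely many centres to obtain a common $L_k$, and then patches the endpoints of each finite pseudo-orbit by replacing the closest centres with $a$ and $b$.) Using $d(A_k) = 0$, I then pick an increasing sequence $M_k \to \infty$ satisfying $|A_k \cap [0, n)| \leq n/(k^2 L_k)$ for all $n \geq M_k$, and growing at least geometrically, say $M_{k+1} \geq 2 M_k$.

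The construction of $\seqz$ is block by block. On $[M_k, M_{k+1})$, form the auxiliary set $U_k := \bigcup_{n \in A_k \cap [M_k, M_{k+1})} [n+1, n+L_k)$ and decompose it into its maximal intervals of consecutive integers $[p+1, q)$. For each such maximal interval the endpoints $p$ and $q$ are untouched by $U_k$, and $q - p \geq L_k$, so by the uniform chain-mixing statement there is a $(1/k)$-pseudo-orbit of length $q - p$ joining $T(x_p)$ to $x_q$; I place its interior at positions $p+1, \ldots, q-1$. At every index not covered by any such bridge, I set $z_n = x_n$.

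Two verifications remain. For the tracing property: if $n \geq M_k$ lies inside some bridge, $\dist(T(z_n), z_{n+1}) < 1/k$ by construction; if $n$ lies outside every bridge in its block, then $z_n = x_n$, $z_{n+1} = x_{n+1}$, and $n \notin A_k$ (else $n$ would sit inside the bridge it generates), so again $\dist(T(z_n), z_{n+1}) < 1/k$, proving that $\seqz$ is an asymptotic pseudo-orbit. For the density, the modifications inside $[M_k, M_{k+1})$ occupy at most $L_k \cdot |A_k \cap [0, M_{k+1})| \leq M_{k+1}/k^2$ indices, and because $M_j$ grows geometrically the cumulative contribution of blocks with $j < k$ is negligible compared to $M_k$; dividing by $n \geq M_k$ yields $d(\{n : x_n \neq z_n\}) = 0$. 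The ``in particular'' clause is immediate because $\dist(x_n, z_n) \leq 1$ forces $\dist_B(\seqx, \seqz) \leq \dbar(\{n : x_n \neq z_n\}) = 0$. The main obstacle is the uniform chain-mixing upgrade together with the bookkeeping that forces one to bridge over entire clusters of bad indices at once, rather than handling bad indices one at a time, so that the correction procedure is well-defined and the final sequence $\seqz$ has no ``surviving'' bad transitions at the cluster boundaries.
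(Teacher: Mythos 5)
The paper itself gives no proof of this proposition: it is imported verbatim as Lemma 3.3 of \cite{KKO}, so there is no internal argument to compare against. Judged on its own, your proof is a correct reconstruction of the standard argument and follows the same route as the cited source: upgrade chain mixing to a uniform statement by compactness and uniform continuity, choose a multiscale sequence $M_k$ along which the bad sets $A_k$ are quantitatively sparse, and surgically replace $\seqx$ near the bad indices by genuine $(1/k)$-chains supported on a set of density zero. The density bookkeeping checks out: the block-$k$ modifications occupy at most $M_{k+1}/k^2$ positions, and since $M_{j+1}\geq 2M_j$ one gets $\sum_{j<k} M_{j+1}/j^2 = o(M_k)$, so $\dbar(\{n : x_n\neq z_n\})=0$; the ``in particular'' clause then follows from the diameter-one normalization.

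The one place that needs more care than you give it is the seam between consecutive blocks. Because $U_k$ is built only from $A_k\cap[M_k,M_{k+1})$, a bad index just below $M_{k+1}$ generates a bridge reaching up to position $M_{k+1}+L_k-2$, which can collide with a block-$(k+1)$ bridge generated by an index of $A_{k+1}\cap[M_{k+1},M_{k+1}+L_k)$; as written, $z$ is then doubly defined there and the transition at the junction is not controlled by either case of your verification. This is routine to repair --- for instance, merge overlapping bridge intervals and run a single chain at the coarser error scale $1/k$ over the merged interval, or declare that block $k+1$ effectively begins only after the last block-$k$ bridge ends; either fix adds $O(L_k+L_{k+1})$ modified positions per seam and leaves the density estimate intact. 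With that patch (and with the harmless off-by-one pinned down, i.e.\ taking the bridge on a maximal interval $[p+1,q)$ to be a $(1/k)$-chain of length $q-p$ from $x_p$ to $x_q$ whose interior points occupy positions $p+1,\dots,q-1$), the proof is complete.
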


    In order to define the vague specification property, Kamae \cite{Kamae} studied an unnamed family of sequences in $X^\infty$ which we will call vague pseudo-orbit.
    
    \begin{defi}\label{VPO}
		We call $\seqx \in X^\infty$ a \textbf{vague pseudo-orbit} in $(X,T)$ if for any open neighborhood $\SU$ of $\Ga_T$ in $X^\infty$ we have
		$$d(\{ n \in \N_0: S^n(\seqx)\in \SU\})=1.$$
	\end{defi}

    It turns out that asymptotic average pseudo-orbits and vague pseudo-orbits are the same sequences. In order to prove the equality between asymptotic average pseudo-orbits and vague pseudo-orbits we will need some lemmas. 

    The first lemma is a consequence of the thickness of subsets of $\N_0$ with asymptotic density $1$. 

    \begin{lemma}\label{intervals}
		Let $G \subset \N_0$. If $d(G)=1$, then for every $k \in \N$ the following holds
		$$d( \{n \in \N_0 : [n,n+k) \subset G \}) = 1.$$
	\end{lemma}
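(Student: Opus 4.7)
The plan is to reduce the statement to a claim about the complement. Let $G^c = \N_0 \setminus G$, so by assumption $d(G^c) = 0$. Set
\[
    A = \{n \in \N_0 : [n,n+k) \subset G\}, \qquad A^c = \{n \in \N_0 : [n,n+k) \cap G^c \neq \emptyset\}.
\]
It suffices to prove $d(A^c) = 0$, because then $d(A) = 1$.

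The key observation is that $n \in A^c$ if and only if there exists $j \in [0,k)$ with $n+j \in G^c$, i.e.\ $n \in G^c - j$. Therefore
\[
    A^c = \bigcup_{j=0}^{k-1}(G^c - j).
\]
Each translate $G^c - j$ has the same asymptotic density as $G^c$, which is zero: indeed, the counting functions of $G^c - j$ and $G^c$ on $[0,n)$ differ by at most $j$, so the $1/n$ normalization kills the difference in the limit.

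Finally, upper asymptotic density is finitely subadditive, so
\[
    \dbar(A^c) \leq \sum_{j=0}^{k-1} \dbar(G^c - j) = 0,
\]
which forces $d(A^c) = 0$ and hence $d(A) = 1$. There is no real obstacle here; the argument is purely combinatorial and the only care needed is in the bookkeeping for the translation invariance of density and the subadditivity of $\dbar$ over a finite union.
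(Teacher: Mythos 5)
Your proof is correct and essentially identical to the paper's: both express the complement of $\{n : [n,n+k)\subset G\}$ as the finite union of translates of $\N_0\setminus G$ (equivalently, complements of translates of $G$), then combine translation invariance of density with finite subadditivity of $\dbar$. No issues.
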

    \begin{proof}
        Let $G \subset \N_0$ be such that $d(G)=1$ and $k \in \N$. Set $G_k= \{n \in \N_0 : [n,n+k) \subset G\}$. It is enough to prove that $\dbar(\N_0 \setminus G_k) = 0$. 
        
        Note that $G_k = \bigcap_{i=0}^{k-1} (G-i).$ 
        Note also that $d(G) = d(G-i)$ for every $i \in [0,k)$. As a consequence, we have $\dbar(\N_0\setminus (G-i))=0$ for every $i \in [0,k)$.
        Lastly, since $\dbar(A\cup B)\leq \dbar(A) + \dbar(B)$ for every $A,B \subseteq \N_0$, we complete the proof by observing that 
        $$\N_0 \setminus G_ k = \bigcup_{i=0}^{k-1}\Big(\N_0 \setminus \left(G-i\right)\Big),$$
        which implies, $\dbar(\N_0 \setminus G_ k)=0$.
    \end{proof}
    
    Directly from the definition of $\dist^\infty$ we have the following lemma.

    \begin{lemma}\label{k_large}
        Let $\vep>0$. If $\frac{1}{2^{k}} < \vep$ and $\seqz \in X^\infty$ satisfies $\max_{1 \leq n \leq k} \dist(z_n, T^n(z_0))< \vep$, then $\dist^\infty(\seqz, \underline{z_0}_T) < \vep$.
    \end{lemma}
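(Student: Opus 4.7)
The plan is to prove the lemma by a direct estimate: split the sum defining $\dist^\infty(\seqz, \underline{z_0}_T)$ at index $k$, bound the head using the hypothesis, and bound the tail using the fact that $\dist$ has diameter at most $1$.

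First, I would write out
\[
\dist^\infty(\seqz, \underline{z_0}_T) = \sum_{n=0}^\infty \frac{\dist(z_n, T^n(z_0))}{2^{n+1}} = \sum_{n=1}^{k} \frac{\dist(z_n, T^n(z_0))}{2^{n+1}} + \sum_{n=k+1}^{\infty} \frac{\dist(z_n, T^n(z_0))}{2^{n+1}},
\]
using that the $n=0$ term vanishes since $z_0 = T^0(z_0)$.

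For the head, the hypothesis $\max_{1 \leq n \leq k} \dist(z_n, T^n(z_0)) < \vep$ gives
\[
\sum_{n=1}^{k} \frac{\dist(z_n, T^n(z_0))}{2^{n+1}} < \vep \sum_{n=1}^{k} \frac{1}{2^{n+1}} < \vep \sum_{n=1}^{\infty} \frac{1}{2^{n+1}} = \frac{\vep}{2}.
\]
For the tail, the standing assumption that $\dist$ has diameter at most $1$ together with $\frac{1}{2^k} < \vep$ yields
\[
\sum_{n=k+1}^{\infty} \frac{\dist(z_n, T^n(z_0))}{2^{n+1}} \leq \sum_{n=k+1}^{\infty} \frac{1}{2^{n+1}} = \frac{1}{2^{k+1}} < \frac{\vep}{2}.
\]
Adding these two bounds yields $\dist^\infty(\seqz, \underline{z_0}_T) < \vep$, as required.

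There is no real obstacle here; the whole statement is a routine consequence of the definition of the product metric $\dist^\infty$, exploiting the geometric decay of the weights $2^{-(n+1)}$ and the normalization of $\dist$. The only thing to get right is the bookkeeping: the split must be placed at exactly the index $k$ supplied by the hypothesis, and one must remember to drop the $n=0$ term so that the head sum is effectively controlled by a proper subsum of $\sum_{n\geq 1} 2^{-(n+1)} = 1/2$ rather than by $1$.
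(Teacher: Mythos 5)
Your proof is correct and is exactly the direct computation the paper has in mind: the paper states this lemma without proof as following ``directly from the definition of $\dist^\infty$,'' and your splitting of the sum at index $k$, with the head bounded by $\vep\sum_{n\ge 1}2^{-(n+1)}=\vep/2$ and the tail by $2^{-(k+1)}<\vep/2$, supplies precisely the routine estimate being alluded to.
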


    The last ingredient follows from the compactness of the space of orbits in the product topology. 
    Informally, it says that for any open set $\SU \subset X^\infty$ containing $\Ga_T$, there exists a sufficiently large $k \in \N$ such that if the first $k$ terms of $\seqz \in X^\infty$ are sufficiently close to an orbit, then $\seqz$ belongs to $\SU$.

	\begin{lemma}\label{open_nbhd}
		Let $\SU \subset X^\infty$ be an open set containing $\Ga_T$. Then there exist $\vep >0$ and $k \in \N$ such that 
		$$ \left\{ \seqz \in X^\infty : \max_{1 \leq n \leq k} \dist(z_n, T^n(z_0))< \vep \right\} \subset \SU.$$  
	\end{lemma}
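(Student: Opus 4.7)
The plan is to argue by contradiction, exploiting the compactness of $\Ga_T$ (and of $X^\infty$) together with the continuity of each iterate $T^n$. Once the contradiction is set up, the rest is a standard diagonal-type compactness extraction in the product topology.

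More concretely, suppose the conclusion fails. Then for every $m \in \N$ (playing the roles of both $\vep = 1/m$ and $k = m$) there exists $\seqz^{(m)} \in X^\infty \setminus \SU$ such that
\[
    \max_{1 \leq n \leq m} \dist\!\left(z^{(m)}_n, T^n(z^{(m)}_0)\right) < \frac{1}{m}.
\]
Since $X^\infty$ is compact and $X^\infty \setminus \SU$ is closed, I can pass to a subsequence $\{\seqz^{(m_j)}\}_{j=1}^\infty$ converging in the product topology to some $\seqz^{\ast} \in X^\infty \setminus \SU$.

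The next step is to show $\seqz^\ast \in \Ga_T$. Fix any $n \geq 1$. For every $m_j \geq n$ we have $\dist(z^{(m_j)}_n, T^n(z^{(m_j)}_0)) < 1/m_j$. Coordinatewise convergence yields $z^{(m_j)}_0 \to z^\ast_0$ and $z^{(m_j)}_n \to z^\ast_n$, and the continuity of $T^n$ yields $T^n(z^{(m_j)}_0) \to T^n(z^\ast_0)$. Passing to the limit in the inequality, $\dist(z^\ast_n, T^n(z^\ast_0)) = 0$, hence $z^\ast_n = T^n(z^\ast_0)$. Since $n$ was arbitrary, $\seqz^\ast = \underline{z^\ast_0}_T \in \Ga_T \subset \SU$, contradicting $\seqz^\ast \in X^\infty \setminus \SU$.

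There is no serious obstacle here: the only point worth double-checking is that the metric $\dist^\infty$ indeed induces the product topology, so that coordinatewise limits may be extracted from convergent sequences in $X^\infty$; this is ensured by the definition of $\dist^\infty$ given in Section~\ref{section_preliminaries}. Everything else is routine compactness and continuity.
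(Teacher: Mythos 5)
Your proof is correct, but it takes a genuinely different route from the paper's. The paper argues directly: it covers the compact set $\Ga_T$ by finitely many $\dist^\infty$-balls whose doubled versions still lie in $\SU$, sets $\vep$ to be the minimum radius and picks $k$ with $2^{-k}<\vep$, and then uses Lemma~\ref{k_large} plus the triangle inequality to place any $\seqz$ with $\max_{1\leq n\leq k}\dist(z_n,T^n(z_0))<\vep$ inside one of the doubled balls. You instead argue by contradiction: assuming the conclusion fails for every pair $(\vep,k)=(1/m,m)$, you extract a sequence in the closed (hence compact) set $X^\infty\setminus\SU$, pass to a coordinatewise-convergent subsequence, and observe that the limit must satisfy $z^\ast_n=T^n(z^\ast_0)$ for all $n$, i.e.\ it lies in $\Ga_T\subset\SU$ --- a contradiction. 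Both arguments are compactness arguments at heart, but they use compactness in different places: the paper uses the finite-subcover formulation on $\Ga_T$ itself, while you use sequential compactness of the complement of $\SU$. Your version is shorter and bypasses Lemma~\ref{k_large} and the explicit $\vep$-net entirely; the paper's direct version has the mild advantage of exhibiting how $\vep$ and $k$ arise from a Lebesgue-number-type constant of the cover, and of reusing Lemma~\ref{k_large}, which is already in place. The only point you needed to be careful about --- that $\dist^\infty$ metrizes the product topology so that sequential compactness and coordinatewise convergence are available --- you address explicitly, so there is no gap.
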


    \begin{proof}
    For $\vep>0$ and $x \in X$, we denote
    $$B^\infty(x,\vep) =\{\underline{y} \in X^\infty : \dist^\infty(\underline{y}, \orbx)< \vep\}.$$
    
    Fix $\SU \subset X^\infty$ an open set containing $\Ga_T$. For each $ x \in X$ there exists $\vep = \vep(x)>0$ such that $B^\infty(x,2\vep) \subset \SU$. By compactness of $\Ga_T$, there exist $x^{(1)},\ldots, x^{(m)}\in X$ and $\{\vep_1, \ldots, \vep_m\}$ such that 
	$$\Ga_T \subset \bigcup_{i=1}^m B^\infty(x^{(i)},  \vep_i) \subset \bigcup_{i=1}^m B^\infty(x^{(i)},  2\vep_i) \subset \SU.$$
	
	Let $\vep= \min\{\vep_1, \ldots, \vep_m\}$ and $k \in \N$ be such that $\frac{1}{2^k} < \vep$. Take $\seqz \in X^\infty$ such that 
	$$\max_{1 \leq n \leq k} \dist(z_n, T^n(z_0))< \vep.$$
	
	By Lemma \ref{k_large}, we have $\dist^\infty(\seqz, \underline{z_0}_T) < \vep.$
    Note that $\underline{z_0}_T \in B^\infty(x^{(i)},\vep_i)$ for some $i \in \{1,\ldots,n\}$. Therefore,  by the triangle inequality, $\seqz \in B^\infty(x^{(i)}, 2\vep_i) \subset \SU $.
    \end{proof}

    \begin{theorem}\label{thm:on-equi-of-v.p.o-a.a.p.o}
        Let $(X,T)$ be a topological dynamical system. A sequence $\seqx \in X^\infty$ is an asymptotic average pseudo-orbit in $(X,T)$ if and only if $\seqx$ is a vague pseudo-orbit in $(X,T)$.
    \end{theorem}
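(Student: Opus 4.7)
The proof naturally splits into two implications, and I plan to attack each by exhibiting a convenient family of open neighborhoods of $\Ga_T$ that interacts well with the errors $\dist(T(x_n), x_{n+1})$.

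For the direction vague $\Rightarrow$ asymptotic average, the idea is to test the vague condition against the simplest possible open neighborhood of $\Ga_T$. Namely, for each $\vep>0$, the set
\[
    \SU_\vep=\{\seqz\in X^\infty : \dist(T(z_0),z_1)<\vep\}
\]
is open (continuity of $T$ and of the coordinate projections) and trivially contains $\Ga_T$, because orbits satisfy $z_1=T(z_0)$. The point is that $S^n(\seqx)\in\SU_\vep$ is literally the assertion $\dist(T(x_n),x_{n+1})<\vep$. Therefore the vague pseudo-orbit condition at $\SU_\vep$ reads $d(\{n : \dist(T(x_n),x_{n+1})<\vep\})=1$, and Proposition \ref{prop:equiv-of-aapo} packages this as $\seqx$ being an asymptotic average pseudo-orbit.

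For the converse, the plan is to combine Lemma \ref{open_nbhd}, the uniform continuity of the first few iterates of $T$, and Lemma \ref{intervals}. Fix an open neighborhood $\SU$ of $\Ga_T$; Lemma \ref{open_nbhd} provides $\vep>0$ and $k\in\N$ such that any $\seqz\in X^\infty$ with $\max_{1\le n\le k}\dist(z_n,T^n(z_0))<\vep$ lies in $\SU$. By uniform continuity of $T, T^2, \ldots, T^{k-1}$ on $X$ (these are all uniformly continuous because $X$ is compact), I can choose $\de>0$ so that whenever $\dist(T(x_{n+i}), x_{n+i+1})<\de$ for every $i\in [0,k)$, an inductive application of the triangle inequality yields $\dist(T^j(x_n), x_{n+j})<\vep$ for every $j\in [1,k]$, placing $S^n(\seqx)$ inside $\SU$.

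With this error-propagation in hand, Proposition \ref{prop:equiv-of-aapo} applied to $\seqx$ gives $d(G)=1$ for $G=\{m : \dist(T(x_m), x_{m+1})<\de\}$, and Lemma \ref{intervals} upgrades this to $d(\{n : [n,n+k)\subset G\})=1$. Every such $n$ satisfies $S^n(\seqx)\in\SU$ by the previous paragraph, so $d(\{n : S^n(\seqx)\in\SU\})=1$, which is exactly the vague pseudo-orbit condition for $\SU$. The main technical obstacle is the inductive propagation-of-errors step, where the constant $\de$ must be chosen uniformly small enough in terms of $\vep$ and $k$ to absorb $k$ applications of the moduli of uniform continuity of $T, \ldots, T^{k-1}$; once this is in place, Lemmas \ref{intervals} and \ref{open_nbhd} glue everything together.
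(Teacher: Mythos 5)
Your proposal is correct, and the overall strategy coincides with the paper's. The converse direction (asymptotic average $\Rightarrow$ vague) is essentially the paper's proof verbatim: Lemma \ref{open_nbhd} reduces the problem to neighborhoods of the form $\{\seqz : \max_{1\le j\le k}\dist(z_j,T^j(z_0))<\vep\}$, uniform continuity of the iterates plus a telescoping triangle inequality propagates the one-step errors to $k$-step errors, and Lemma \ref{intervals} supplies the density-one set of good starting indices; the paper implements your ``absorb $k$ applications of the moduli of continuity'' by choosing $\de$ so that each of the $k$ telescoping terms is below $\vep/k$. The one genuine (if minor) difference is in the forward direction: you test the vague condition against $\SU_\vep=\{\seqz\in X^\infty : \dist(T(z_0),z_1)<\vep\}$, which is open, contains $\Ga_T$, and has the feature that $S^n(\seqx)\in\SU_\vep$ is literally the statement $\dist(T(x_n),x_{n+1})<\vep$, so Proposition \ref{prop:equiv-of-aapo} finishes immediately. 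The paper instead uses the neighborhood $\{\underline{y} : \exists z\in X,\ \dist(y_0,z)<\de,\ \dist(y_1,T(z))<\de\}$ and then needs uniform continuity of $T$ and a triangle inequality to convert membership into a bound on $\dist(T(y_0),y_1)$. Your choice of test set eliminates that extra step; both are valid.
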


	\begin{proof}
        Let $\vep >0$ and $\seqx \in X^\infty$ be a vague pseudo-orbit. By Proposition \ref{prop:equiv-of-aapo}, it suffices to show that 
        \begin{equation}\label{eqn:densitiy-defn-aapo}
            d(\{n \in \N_0 : \dist(T(x_n), x_{n+1}) <\vep\}) =1.
        \end{equation}

        Using the uniform continuity of $T$, take $0<\de<\frac{\vep}{2}$ such that $\dist(T(w),T(v)) <~\frac{\vep}{2}$ for every $w,v \in X$ with $\dist(w,v)<\de$.
        
        Set $\SU=\{\underline{y} \in X^\infty : \exists z \in X \text{ s.t. } \dist(y_0, z)<\de \text{ and } \dist(y_1, T(z))<\de\}$. Note that $\SU$ is an open set containing $\Ga_T$. Since $\seqx$ is a vague pseudo-orbit, by definition, we have
        $$d(\{n \in \N_0 : S^n(\seqx) \in \SU\}) =1.$$

        Observe that for each $n \in \N_0$ such that $S^n(\seqx) \in \SU$ there exists $z_n \in X$ such that $\dist(x_n, z_n)<\de$ and $\dist(x_{n+1}, T(z_n)) < \de $. Thus, by the triangle inequality, we obtain
        $$\dist(T(x_n), x_{n+1}) \leq \dist(T(x_n), T(z_n))+ \dist(x_{n+1}, T(z_n)) < \vep.$$

        Therefore,
        $$\{n \in \N_0 : S^n(\seqx) \in \SU\} \subset \{n \in \N_0 : \dist(T(x_n), x_{n+1}) <\vep\},$$
        and consequently, equation \eqref{eqn:densitiy-defn-aapo} holds.
        
		For the converse, let $\seqx \in X^\infty$ be an asymptotic average pseudo-orbit in $(X,T)$. Fix $\vep >0$ and $k \in \N$. Set
		$$\SV = \left\{ \seqz \in X^\infty : \max_{1 \leq j \leq k} \dist(z_j, T^j(z_0))< \vep \right\}. $$

        By Lemma \ref{open_nbhd}, to prove that $\seqx$ is a vague pseudo-orbit, it suffices to show that
		\begin{equation}\label{eqn:cond-for-vpo}
		  d(\{n \in \N_0 : S^n(\seqx) \in \SV\}) = 1.  
		\end{equation}

        Using uniform continuity of $T$, take $0< \de < \frac{\vep}{k}$ so that for any $w,v \in X$ with $\rho(w,v)<\delta$, we have $\dist(T^j(w), T^j(v)) <\frac{\vep}{k}$ for $j \in [1,k]$.

        Set $$\tilde{G} = \{n \in \N_0 : \dist(T(x_n), x_{n+1}) < \de\}.$$ 
        
        Since $\seqx$ is an asymptotic average pseudo-orbit, it follows from Proposition \ref{prop:equiv-of-aapo} that $d(\tilde{G}) =1$. 
        Note that, by the choice of $\de$, for every $m \in \tilde{G}$ and $j \in [1,k]$ we have
        \begin{equation}\label{eq_unif_cont}
            \dist\left(T^j(x_m), T^{j-1}(x_{m+1})\right) < \frac{\vep}{k}.
        \end{equation}
        
        Let $G = \{n \in \N_0 : [n,n+k) \subset \tilde{G} \}$.
        By Lemma \ref{intervals}, it follows that $d(G)=1$.     
        Fix $n \in G$. By construction,  $(\ref{eq_unif_cont})$ holds for every $m \in [n, n + k)$ and $j \in [1,k]$. 
        
        By the triangle inequality, for every $j \in [1,k]$ we obtain the following estimate (see Figure \ref{triangle}):
        $$\dist\left(T^j(x_n), x_{n+j}\right) \leq \sum_{i=0}^{j-1} \dist \left(T^{j-i}(x_{n+i}), T^{j-i-1} (x_{n+i+1})\right)< \vep,$$
        that is,
		$$\max_{1 \leq j \leq k} \dist\left(T^j(x_n), x_{n+j}\right) < \vep.$$ 
		
        Therefore, $G \subseteq \{n \in \N_0 : S^n(\seqx) \in \SV\},$
		showing that equation \eqref{eqn:cond-for-vpo} holds.
	\end{proof}
    \begin{figure}[ht]
        \centering
        \includegraphics{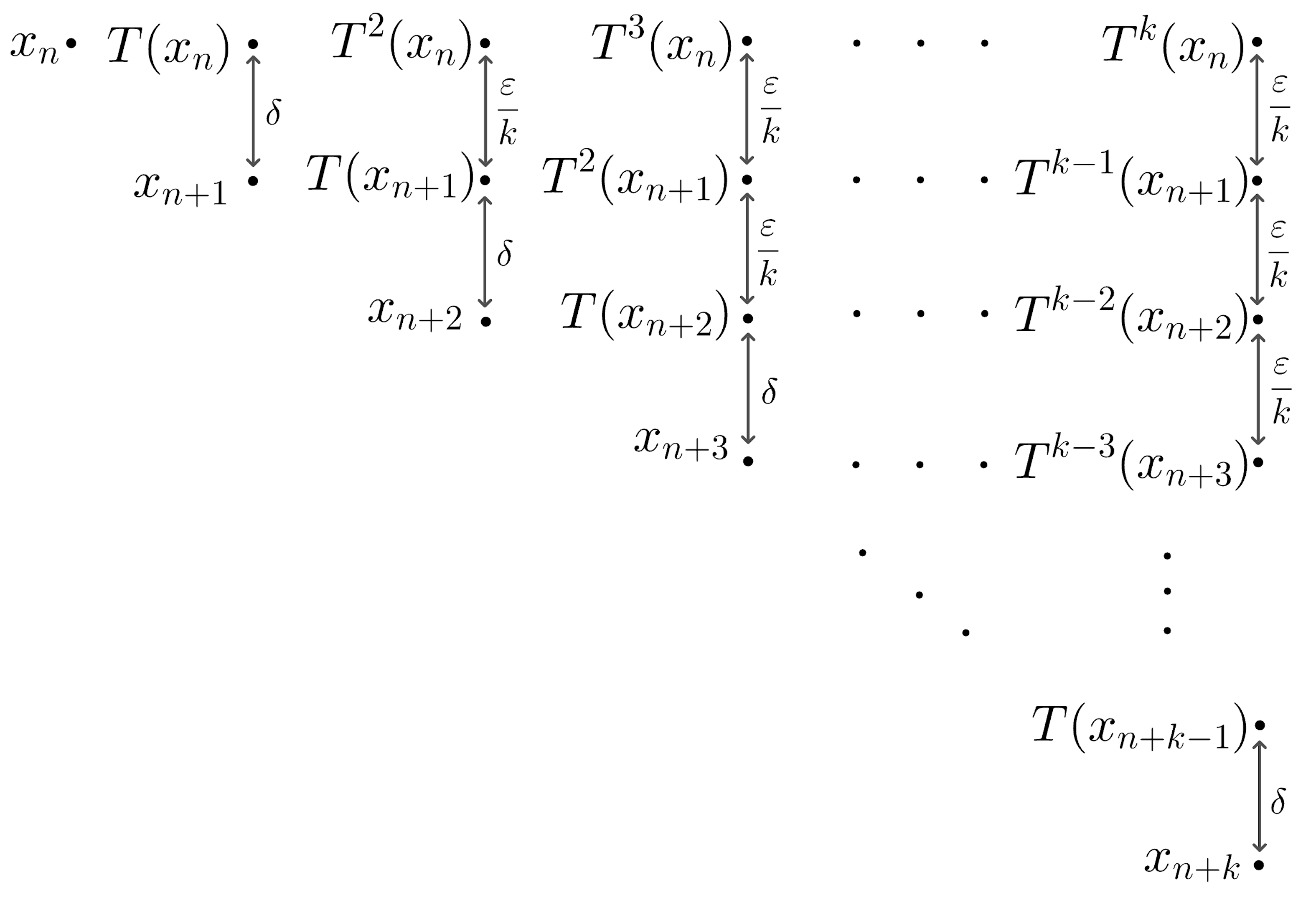}
        \caption{illustration of the triangle inequality.}
        \label{triangle}
    \end{figure}
  
    \section{Tracing properties of topological dynamical systems}\label{section_tracing_properties}

    In the same way as the original definition of pseudo-orbit motivated generalizations, the classical shadowing property inspired alternative types of tracing properties. 
    
    The average shadowing property introduced by Blank \cite{Blank} is one of the alternative notions. 
    \begin{defi}\label{def_asp}
		We say that a topological dynamical system $(X, T)$ has the \textbf{average shadowing property} if for every $\vep >0$ there exists $\de>0$ such that for every $\de$-average pseudo-orbit $\seqx \in X^\infty$ in $(X,T)$ there exists $z \in X$ such that
		$$\dist_B(\seqz_T, \seqx) < \vep.$$
        We say that $\seqx$ is $\vep$-traced on average by $z$. 
	\end{defi} 

    In \cite{KKO}, Kulczycki, Kwietniak and Oprocha provided an equivalent formulation for the average shadowing property for the case where the topological dynamical system is chain mixing.
    It is not hard to see that $\de$-pseudo-orbits are also $\de$-average pseudo-orbits, and roughly speaking, Kulczycki, Kwietniak and Oprocha showed that it is enough to assure that pseudo-orbits can be traced on average.
    
    \begin{theorem}[Theorem 3.6 in \cite{KKO}]\label{thm_ASP_enough_pseudoorbit}
     Let $(X, T)$ be a topological dynamical system. If $(X,T)$ is chain mixing, then the following conditions are equivalent:
    \begin{enumerate}
        \item $(X,T)$ has the average shadowing property;

        \item for every $\vep>0$ there is $\de>0$ such that for every $\de$-pseudo-orbit $\seqx \in X^\infty$ there exists $z \in X$ so that $\dist_B(\seqz_T, \seqx) < \vep.$
    \end{enumerate}
   \end{theorem}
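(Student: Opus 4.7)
The direction $(1) \Rightarrow (2)$ is immediate: any $\de$-pseudo-orbit $\seqx$ satisfies $\dist(T(x_i), x_{i+1}) < \de$ for every $i \in \N_0$, so every windowed average of these errors is bounded by $\de$, and $\seqx$ is a $\de$-average pseudo-orbit in the sense of Definition \ref{delta-APO} with $N = 1$. The average shadowing property then produces $z \in X$ with $\dist_B(\seqz_T, \seqx) < \vep$, giving $(2)$.

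For $(2) \Rightarrow (1)$, the plan is a two-step approximation. Given $\vep > 0$, apply $(2)$ with precision $\vep/2$ to obtain $\de_0 > 0$ such that every $\de_0$-pseudo-orbit is $(\vep/2)$-traced on average. The goal is then to find $\de > 0$ small enough that any $\de$-average pseudo-orbit $\seqx$ admits a genuine $\de_0$-pseudo-orbit $\underline{y} \in X^\infty$ with $\dist_B(\seqx, \underline{y}) < \vep/2$; applying $(2)$ to $\underline{y}$ and combining with the triangle inequality for $\dist_B$ then yields $z \in X$ with $\dist_B(\seqz_T, \seqx) < \vep$. All the real work lies in producing such a $\underline{y}$.

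The construction is the technical heart. A standard compactness argument, combined with uniform continuity of $T$, upgrades chain mixing to a uniform version: there exists $M \in \N$ such that for every $x, y \in X$ and every $n \geq M$, a $\de_0$-pseudo-orbit of length $n$ from $x$ to $y$ exists. Set $B = \{i \in \N_0 : \dist(T(x_i), x_{i+1}) \geq \de_0\}$; applying Markov's inequality to the $\de$-average pseudo-orbit condition yields $\dbar(B) \leq \de/\de_0$. Enlarging to $R = \bigcup_{i \in B}\left([i - M, i + M + 1] \cap \N_0\right)$ gives $\dbar(R) \leq (2M+2)\de/\de_0$, and by maximality each maximal interval $[p, q]$ of $R$ satisfies $q - p \geq 2M + 1$ together with $p - 1, q \notin B$. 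Define $\underline{y} = \seqx$ on $\N_0 \setminus R$, and on each such $[p, q]$ fill in a $\de_0$-pseudo-orbit $y_p, y_{p+1}, \ldots, y_q$ with $y_p = x_p$ and $y_q = x_q$ via the uniform chain mixing statement above; the boundary agreement ensures that the transitions of $\underline{y}$ at indices $p - 1$ and $q$ coincide with the already $\de_0$-good transitions of $\seqx$, so $\underline{y}$ is a $\de_0$-pseudo-orbit globally. Since $X$ has diameter at most $1$ and $\seqx, \underline{y}$ differ only on $R$, the estimate $\dist_B(\seqx, \underline{y}) \leq \dbar(R) \leq (2M+2)\de/\de_0$ is smaller than $\vep/2$ provided $\de < \vep\de_0/(4M+4)$, which fixes the choice of $\de$ and closes the argument. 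The main obstacles are the uniformization of chain mixing to the constant $M$ (a standard compactness argument) and the correct ordering of parameter choices: $\de_0$ is chosen from $\vep$ via $(2)$, then $M$ from $\de_0$ via uniform chain mixing, and finally $\de$ small in terms of both.
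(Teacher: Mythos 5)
This theorem is not proved in the paper at all; it is imported verbatim as Theorem 3.6 of \cite{KKO}, so there is no in-paper proof to compare against. Your argument is correct and is essentially the standard one from \cite{KKO}: the $(1)\Rightarrow(2)$ direction is the trivial observation that $\de$-pseudo-orbits are $\de$-average pseudo-orbits, and the $(2)\Rightarrow(1)$ direction performs exactly the surgery underlying Lemma 3.3 of \cite{KKO} (Proposition \ref{AAPO_asympPO} here), namely: locate the bad transitions $B$ via a Markov-type bound, fatten them to $R$, and use a uniformized chain-mixing constant $M$ to splice in genuine $\de_0$-chains on the maximal intervals of $R$ while keeping endpoints matched to $\seqx$. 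The only points you leave implicit are harmless: the maximal interval containing $0$ (if any) has no left boundary condition to check, and an infinite maximal interval of $R$ is excluded because it would force $\dbar(B)\geq 1/(2M+2)$, contradicting $\dbar(B)\leq\de/\de_0$ for your choice of $\de$.
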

    
    Inspired by the works of Blank \cite{Blank}, and Eirola, Nevanlinna and Pilyugin \cite{limit_shadowing}, Gu introduced the asymptotic average shadowing property in \cite{Gu}.
    
	\begin{defi}\label{def_aasp}
		We say that a topological dynamical system $(X, T)$ has the \textbf{asymptotic average shadowing property} if for every asymptotic average pseudo-orbit $\seqx \in X^\infty$ in $(X,T)$ there exists $z \in X$ such that
		$$\dist_B(\seqz_T, \seqx)= 0.$$
	\end{defi} 

    The average shadowing property is known to be a consequence of the asymptotic average shadowing property (see \cite{KKO, WOC}). One can see that the converse is true provided that $(X,T)$ is surjective and Besicovitch complete.
    \begin{theorem}\label{equiv_AASP_ASP+Besicovitch}
        Let $(X,T)$ be a surjective topological dynamical system. Then $(X,T)$ has the asymptotic average shadowing property if and only if $(X,T)$ is Besicovitch complete and has the average shadowing property.
    \end{theorem}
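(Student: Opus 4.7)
The plan is to prove the two implications with distinct constructions. For $(\Rightarrow)$, AASP implies ASP is established in \cite{KKO, WOC}, so the task reduces to Besicovitch completeness. Given a Besicovitch-Cauchy sequence $\{z^{(n)}\}$, I first extract a subsequence $\{z^{(n_k)}\}$ with $\distBT(z^{(n_k)}, z^{(n_j)}) \leq \eta_k$ for all $j \geq k$, where $\eta_k \to 0$. The key construction is to concatenate longer and longer orbit pieces of these points: set $x_i = T^i(z^{(n_k)})$ for $i \in [M_k, M_{k+1})$ along a partition $0 = M_0 < M_1 < \ldots$ chosen so that $M_{k+1} \geq 2 M_k$ (geometric growth) and so that for every pair $(j, k')$ with $j \geq k' \leq k$ the partial Cesaro average $\frac{1}{M_{k+1}}\sum_{i=0}^{M_{k+1}-1}\dist(T^i(z^{(n_{k'})}), T^i(z^{(n_j)}))$ does not exceed $2\eta_{k'}$; these are finitely many conditions at each step, so they can be met. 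Within each block $\seqx$ follows a genuine orbit, and pseudo-orbit errors occur only at block endpoints, a set of density zero, so $\seqx$ is an AAPO.

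Applying AASP furnishes $z \in X$ with $\dist_B(\orbz, \seqx) = 0$. The main estimate is then $\dist_B(\seqx, \underline{z^{(n_k)}}_T) = O(\eta_k)$: splitting the Cesaro sum, blocks with index $< k$ contribute at most $M_k/n$ via the diameter bound, while each block with $j \geq k$ contributes at most $2\eta_k M_{j+1} \leq 4\eta_k N_j$ by the Cesaro condition and the geometric growth, summing to $O(\eta_k)$ after normalization. The triangle inequality then gives $\distBT(z, z^{(n_k)}) \to 0$, so $z^{(n)} \to z$ in $\distBT$ since $\{z^{(n)}\}$ is Cauchy.

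For $(\Leftarrow)$, take an AAPO $\seqx$. I invoke Proposition \ref{AAPO_asympPO} to obtain an asymptotic pseudo-orbit $\seqw$ with $\dist_B(\seqw, \seqx) = 0$; this step requires chain mixing, which I plan to derive as a short lemma from ASP under surjectivity. For each $m \in \N$, let $\de_m$ be the modulus of ASP for $\vep_m = 1/m$; by the asymptotic condition there is $N_m$ such that $S^{N_m}(\seqw)$ is a $\de_m$-pseudo-orbit, hence a $\de_m$-average pseudo-orbit. ASP then yields a point $y^{(m)}$ with $\dist_B(\underline{y^{(m)}}_T, S^{N_m}(\seqw)) < 1/m$. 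Using surjectivity, pick $z^{(m)}$ with $T^{N_m}(z^{(m)}) = y^{(m)}$, so that $S^{N_m}(\underline{z^{(m)}}_T) = \underline{y^{(m)}}_T$; by shift-invariance of $\dist_B$ this gives $\dist_B(\underline{z^{(m)}}_T, \seqw) < 1/m$ and hence $\dist_B(\underline{z^{(m)}}_T, \seqx) < 1/m$. The sequence $\{z^{(m)}\}$ is Besicovitch-Cauchy via the triangle inequality through $\seqx$, converges by completeness to some $z$, and a final triangle inequality gives $\dist_B(\orbz, \seqx) = 0$, proving AASP.

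The main obstacle is the balancing act in the forward construction: the block lengths must simultaneously ensure that pseudo-orbit errors decay on average, bring finite Cesaro averages within $\eta_{k'}$ of their $\limsup$ values, and preserve geometric growth so that each fixed-index block contributes only $O(\eta_k)$ to the final estimate. In the reverse direction, extracting chain mixing from ASP under surjectivity, which is needed to apply Proposition \ref{AAPO_asympPO}, is the other delicate point.
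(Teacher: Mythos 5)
Your overall architecture is correct and both directions go through, but you take a partly different route from the paper. In the forward direction the paper simply cites two external results (Corollary 4.2 in \cite{WOC} for AASP $\Rightarrow$ ASP, and Corollary 30 in \cite{LK17} for AASP $\Rightarrow$ Besicovitch completeness), whereas you reprove the completeness half from scratch via the block-concatenation construction; that is the standard argument behind the cited result and is essentially sound, so your version is more self-contained at the cost of the delicate bookkeeping you yourself flag. One technical point there: imposing the Ces\`aro condition only at the single times $M_{k+1}$ is slightly too weak, because when you estimate $\frac{1}{n}\sum_{i=0}^{n-1}\dist(x_i,T^i(z^{(n_k)}))$ for $n$ lying just inside block $j$, the bound $2\eta_k M_{j+1}$ for the partial last block is not $O(\eta_k n)$ (you control $M_{j+1}$ from below but not from above relative to $M_j$). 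The fix is to require the partial averages $\frac{1}{n}\sum_{i=0}^{n-1}\dist(T^i(z^{(n_{k'})}),T^i(z^{(n_j)}))\leq 2\eta_{k'}$ for \emph{all} $n\geq M_j$, which is equally achievable since only finitely many pairs are in play at each stage.

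In the reverse direction your proof is in substance the paper's: chain mixing (Lemma 3.1 in \cite{KKO}, which you propose to rederive) plus Proposition \ref{AAPO_asympPO} upgrades the asymptotic average pseudo-orbit to an asymptotic pseudo-orbit $\seqw$ at Besicovitch distance zero, ASP produces a Besicovitch-Cauchy sequence of tracing points, and completeness delivers the limit. Your detour — truncating $\seqw$ to a genuine $\de_m$-pseudo-orbit from time $N_m$ on and then pulling back by surjectivity — is valid (shift-invariance of $\dist_B$ makes the preimage step harmless) but unnecessary: Proposition \ref{limiting-p.o-vs-aver} already says an asymptotic pseudo-orbit is a $\de$-average pseudo-orbit for every $\de>0$, so ASP applies to $\seqw$ directly without any shifting or use of surjectivity at that point. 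The paper's version is therefore shorter, and surjectivity enters only through chain mixing.
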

    \begin{proof}
        By Corollary 4.2 in \cite{WOC}, the asymptotic average shadowing property implies the average shadowing property. Moreover, by Corollary 30 in \cite{LK17}, if $(X,T)$ has the asymptotic average shadowing property, then $(X,T)$ is Besicovitch complete.

        Conversely, assume that $(X,T)$ is Besicovitch complete and has the average shadowing property. Let $\seqx\in X^\infty$ be an asymptotic average pseudo-orbit. By Lemma 3.1 in \cite{KKO} $(X,T)$ is chain mixing, hence, by Proposition \ref{AAPO_asympPO}, there exists an asymptotic pseudo-orbit $\underline{y}\in X^\infty$ such that 
        \begin{equation}\label{eqn:improved-aapo}
            \distB(\seqx,\underline{y})=0.
        \end{equation}
        
        It follows from Proposition \ref{limiting-p.o-vs-aver} that $\underline{y}$ is a $\delta$-average pseudo-orbit in any $\delta>0$. Therefore, by the average shadowing property, for every $\vep>0$ there exists a point $z^{(\vep)} \in X$ such that $\distB(\orbz^{(\vep)},\underline{y})<\vep.$
        
        Using the above observation we pick a sequence $\{z^{(n)}\}_{n=1}^\infty\subset X$ such that $\distB(\underline{z}_T^{(n)},\underline{y})<2^{-n}$ for every $n\in \N$, and for $m>n$ we have $\distBT(z^{(n)},z^{(m)})<2^{-n}.$
        
        The Besicovitch completeness of $(X,T)$ implies that there exists $z \in X$ such that 
        \[
        \lim_{n\to \infty}\distBT(z^{(n)},z)=0.
        \]
        Moreover, by construction, we have
        \[  \lim_{n\to \infty}\distB(\underline{z}_T^{(n)},\underline{y})=0.
        \]
        Therefore, $\distB(\orbz,\underline{y})=0$, and by \eqref{eqn:improved-aapo}, we conclude that $\distB(\orbz,\seqx)=0$.
    \end{proof}
    
    Finally, we focus on the vague specification property, introduced by Kamae in \cite{Kamae}.
    
	\begin{defi}\label{def_vsp}
		We say that a topological dynamical system $(X, T)$ has the \textbf{vague specification property} if for every vague pseudo-orbit $\seqx \in X^\infty$ in $(X,T)$ there is $z \in X$ such that for every $\vep>0$ the following holds:
		$$d\left(\{ n \in \N_0 : \rho(T^n(z), x_n)<\vep \}\right)=1.$$
	\end{defi}

    If we combine Propositions \ref{lem:eq-of-besicovitch-and-density} and \ref{equivalence_pseudometrics}, we see that the notions of tracing used in Definitions \ref{def_aasp} and \ref{def_vsp} are actually equivalent. As a consequence of Theorem \ref{thm:on-equi-of-v.p.o-a.a.p.o}, we obtain the equivalence between Definitions \ref{def_aasp} and \ref{def_vsp}.
	
	\begin{theorem}\label{thm:AASP-VSP}
		A topological dynamical system $(X,T)$ has the vague specification property if and only if $(X,T)$ has the asymptotic average shadowing property.
	\end{theorem}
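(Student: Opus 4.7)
The plan is to assemble the equivalence from two pieces already in place: the identity of the two classes of pseudo-orbits and the identity of the two notions of tracing. By Theorem \ref{thm:on-equi-of-v.p.o-a.a.p.o}, a sequence $\seqx \in X^\infty$ is an asymptotic average pseudo-orbit in $(X,T)$ if and only if it is a vague pseudo-orbit in $(X,T)$, so the universal quantifiers in Definitions \ref{def_aasp} and \ref{def_vsp} range over the same set of sequences. It therefore suffices to show that, for a fixed $\seqx \in X^\infty$, the existential tracing condition in Definition \ref{def_aasp} is equivalent to the one in Definition \ref{def_vsp}.

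First I would fix $z \in X$ and $\seqx \in X^\infty$ and compare the two tracing conditions on the sequences $\orbz = \{T^n(z)\}_{n=0}^\infty$ and $\seqx$. The AASP condition reads $\distB(\orbz, \seqx) = 0$, while the VSP condition reads $d(\{n \in \N_0 : \dist(T^n(z), x_n) < \vep\}) = 1$ for every $\vep > 0$. By Remark \ref{equivalence_pseudometrics}, the pseudometrics $\distB$ and $\dist_B'$ on $X^\infty$ are uniformly equivalent, so $\distB(\orbz, \seqx) = 0$ holds if and only if $\dist_B'(\orbz, \seqx) = 0$. Then Proposition \ref{lem:eq-of-besicovitch-and-density}, applied to the pair $(\orbz, \seqx)$, gives precisely that $\dist_B'(\orbz, \seqx) = 0$ is equivalent to the density condition defining VSP-tracing.

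Putting these two observations together yields both implications: if $(X,T)$ has the asymptotic average shadowing property and $\seqx$ is a vague pseudo-orbit, then by Theorem \ref{thm:on-equi-of-v.p.o-a.a.p.o} it is an asymptotic average pseudo-orbit, AASP produces $z \in X$ with $\distB(\orbz, \seqx) = 0$, and the equivalence of the tracing conditions shows that $z$ traces $\seqx$ in the vague sense. The converse direction is symmetric.

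I do not expect any genuine obstacle here, since all the real work has been absorbed into Theorem \ref{thm:on-equi-of-v.p.o-a.a.p.o} (which required the compactness argument of Lemma \ref{open_nbhd} and the density-of-thick-intervals Lemma \ref{intervals}) and into the elementary Proposition \ref{lem:eq-of-besicovitch-and-density} together with the uniform equivalence of $\distB$ and $\dist_B'$. The proof is essentially a bookkeeping step that matches quantifiers and invokes the three earlier results.
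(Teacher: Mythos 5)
Your proposal is correct and follows essentially the same route as the paper: identify the two classes of pseudo-orbits via Theorem \ref{thm:on-equi-of-v.p.o-a.a.p.o}, and identify the two tracing conditions via Remark \ref{equivalence_pseudometrics} together with Proposition \ref{lem:eq-of-besicovitch-and-density}. The paper's proof is exactly this bookkeeping argument, carried out separately in each direction.
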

	
	\begin{proof}
        We first assume that $(X,T)$ has the vague specification property. Fix an asymptotic average pseudo-orbit $\seqx \in X^\infty$. By Theorem \ref{thm:on-equi-of-v.p.o-a.a.p.o}, $\seqx$ is also a vague pseudo-orbit. Hence, by the vague specification property, there exists $z \in X$ such that for every $\vep>0$ it holds that
		$$d\left(\{ n\in \N_0 : \rho(T^n(z), x_n)<\vep \}\right)=1.$$
        
        Applying Lemma \ref{lem:eq-of-besicovitch-and-density} followed by Lemma \ref{equivalence_pseudometrics}, we see that $\dist_B(\orbz ,\seqx)= 0$. Therefore, $(X,T)$ has the asymptotic average shadowing property.

        Conversely, assume that $(X,T)$ has the asymptotic average shadowing property and take $\seqx \in X^\infty$ which is a vague pseudo-orbit. 
        By Theorem \ref{thm:on-equi-of-v.p.o-a.a.p.o}, $\seqx \in X^\infty$ is also an asymptotic average pseudo-orbit in $(X,T)$. 
        Since $(X,T)$ has the asymptotic average shadowing property, there exists $z \in X$ such that $\rho_B(\seqz_T,\seqx)=0$. Applying Lemmas \ref{equivalence_pseudometrics} and \ref{lem:eq-of-besicovitch-and-density}, we obtain 
        $$d\left(\{ n\in \N_0 : \rho(T^n(z), x_n)<\vep \}\right)=1.$$ 
        
        Hence, $(X,T)$ has the vague specification property. \qedhere
	\end{proof}

    \section{Tracing properties of shift spaces}\label{section_shift_spaces}

Let $\alf = \{0,1\}$ be endowed with the discrete topology. We call the set $\alf$ an \textbf{alphabet} and each element of $\alf$ we call a \textbf{letter}.
By the \textbf{full shift} over $\alf$ we mean the infinite Cartesian product $\FS$
of $\alf$ indexed by $\N_0$, that is, 
\[
\FS=\left\{ \left\{x_n\right\}_{n=0}^\infty : x_n \in \alf \text{ for all } n\in \N_0  \right\}.
\]

We endow the full shift $\FS$ with the product topology. By the Tychonoff Theorem, the space $\FS$ is compact. A metric compatible with the product topology on $\FS$ is given for $x=\{x_n\}_{n=0}^\infty, y=\{y_n\}_{n=0}^\infty \in \FS$ by
\[
\rho(x,y)=\begin{cases}
            0, & \mbox{if } x=y, \\
            2^{-\min\{n \in \N_0:x_n\neq y_n\}}, & \mbox{otherwise}.
          \end{cases}
\] 

The \textbf{shift map} $\sigma\colon\FS\to\FS$ is defined by $\sigma(x)=\{x_{n+1}\}_{n=0}^\infty$ where $x=\{x_n\}_{n=0}^\infty\in\FS$.
A nonempty, closed, and $\sigma$-invariant set $X \subseteq \FS$ is called a \textbf{shift space} over $\alf$. When referring to a shift space and its properties, we will be implicitly assuming that we are dealing with the topological dynamical system $(X,\si)$. For aesthetic reasons, along this section we will adopt an upper index for sequences of points in $X$, that is, we write
$$X^\infty = \{ \{x^{(n)}\}_{n=0}^\infty : x^{(n)} \in X \text{ for all } n\in \N_0\}.$$

We call a finite sequence of letters from $\alf$ a \textbf{word} over $\alf$. 
The number of letters in a word $w$ is called the \textbf{length} of $w$. 
We write $|w|$ for the length of a word $w$.
The only sequence with length $0$ is called the \textbf{empty word} and is denoted by $\emptyword$. 

Let $x=\{x_n\}_{n=0}^\infty \in \FS$ and $i, j \in \N_0$. For $ i \leq j$, we write $x_{[i,j)}$ for the word $x_ix_{i+1}\ldots x_{j-1}$ over $\alf$ of length $j-i$ and we agree that $ x_{[i,i)} = \la$. We will also write $x_{[k,\infty)}$ to denote $\si^k(x)$ for $k \in \N_0$. 

We write $\alf^*$ for the set of all words over $\alf$. Each word $w \in \alf^*$ determines the \textbf{cylinder set} $[w] = \{x \in \alf^\infty : x_{[0, |w|)} = w\}$. 
The \textbf{language} of a shift space $X$ is $\lang(X)=\left\{ w\in \alf^* : [w] \cap X \neq \emptyset\right\}$.

For every $n\in\N_0$, we denote by $\lang_n(X) \subset \alf^*$ the set of all words $w\in\lang(X)$ such that $|w|=n$. Hence, the following holds
\[
    \lang(X)=\bigcup_{n\in\N_0}\lang_n(X).
\]

By the \textbf{concatenation} of the words $u=u_1\ldots u_k\in \alf^*$ and $v=v_1\ldots v_m\in \alf^*$ we mean the word $u_1\ldots u_k v_1\ldots v_m$, and we denote it by $uv$. We say that a word $u \in \alf^*$ is a \textbf{prefix} (resp. \textbf{suffix}) of a word $w \in \alf^*$ if there exists $v \in \alf^*$ such that $uv = w$ (resp. $vu=w$). For $n\in \N$, the word $0^n = 0\ldots0 \in \alf^*$ (resp. $1^n = 1 \ldots 1$) stands for the word of length $n$ composed exclusively of $0$'s (resp. $1$'s). If $w \in \alf^*$, then $w^\infty = www\ldots \in \alf^\infty$ represents the infinite self-concatenation of $w$.

We say that a shift space $X$ is \textbf{transitive} if for every pair of words $u,v \in \lang(X)$ there exists $w \in \lang(X)$ such that $uwv \in \lang(X)$. We say that a shift space $X$ is \textbf{mixing} if for every pair of words $u,v \in \lang(X)$ there exists $N \in \N$ such that for every $n \geq N$ there is a word $w \in \lang_n(X)$ such that $uwv \in \lang(X)$.

Let $\mathcal{B}\subset \alf^*$ be a collection of words. We say that $\mathcal{B}$ is a \textbf{code} for a shift space $X$, if $X$ is the collection of all possible infinite concatenations of words from $\SB$ and their shifts. Moreover, $X$ is called \textbf{coded shift} in this case.

We denote an oriented $\alf$-labelled (multi) graph by a triple $G=\left(V,E,\tau\right)$, where $V$ is a set of vertices, $E$ is a set of edges, and $\tau\colon E\to \alf$ is a so-called label map. Every $e \in E$ has two end points, an initial vertex $i(e)\in V$ and terminal vertex $t(e)\in V$. Let $\ell \in \N$. A sequence of edges $\{e_1, \ldots, e_\ell\}\subset E$ is called a \textbf{path} of length $\ell$ if there exists a sequence of vertices $\{v_1, \ldots, v_\ell\} \subset V$ with $i(e_i)=v_i$, and $t(e_i)=v_{i+1}$ for every $1 \leq i <\ell$. 

We define the shift $X_G$ to be the set of all sequences $x\in \FS$ such that $x$ is the sequence of labels of an infinite path on a graph $G$. A shift space $X$ is called \textbf{sofic} if there exists a labelled graph $G=\left(V,E,\tau \right)$ such that the set of vertices $V$ is finite and $X=X_G$. 

The $\textbf{hereditary closure}$ of a shift space $X$ is
\begin{equation*}\label{eqn:hereditarty}
    \tilde{X}=\{ y \in \FS : \exists x\in X \text{ s.t. } y\leq x \text{ coordinate-wise}\}.
\end{equation*} A shift space $X$ is called \textbf{hereditary} if $\tilde{X}=X$.

We say that $w \in \lang(X)$ is a \textbf{synchronizing word} for a shift space $X$ if $uw, wv \in \lang(X)$ implies $uwv\in \lang(X)$. A shift space $X$ is called \textbf{synchronized} if it is transitive and has a synchronizing word.
In the particular case of hereditary shifts, it is not hard to see that we can assume the synchronizing word to be composed only of $0$s. More precisely, we have the following lemma whose proof is left to the reader.
\begin{lemma}\label{hereditary_shift_sync_word}
    Let $X$ be a hereditary shift space. If $w \in \lang_n(X)$ is a synchronizing word for $X$, then $0^n \in \lang(X)$ is also a synchronizing word for $X$.
\end{lemma}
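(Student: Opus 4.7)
The plan is to verify the two requirements for $0^n$ to be a synchronizing word for $X$: membership in $\lang(X)$ and the synchronizing identity.

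For membership, pick $x \in X$ with $x_{[0,n)} = w$, and define $y \in \FS$ by $y_i = 0$ for $i < n$ and $y_i = x_i$ for $i \geq n$. Then $y \leq x$ coordinate-wise, so $y \in X$ by the hereditary hypothesis, whence $0^n = y_{[0,n)} \in \lang(X)$.

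For the synchronizing identity, fix $u, v$ with $u \cdot 0^n, 0^n \cdot v \in \lang(X)$ and aim to show $u \cdot 0^n \cdot v \in \lang(X)$. The natural strategy is to route the gluing through the original synchronizing word $w$. Using transitivity of the synchronized shift $X$, choose $s, t \in \alf^*$ such that $u \cdot 0^n \cdot s \cdot w \in \lang(X)$ and $w \cdot t \cdot 0^n \cdot v \in \lang(X)$. Setting $u' := u \cdot 0^n \cdot s$ and $v' := t \cdot 0^n \cdot v$, the synchronizing property of $w$ yields $u' \cdot w \cdot v' = u \cdot 0^n \cdot s \cdot w \cdot t \cdot 0^n \cdot v \in \lang(X)$. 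A single pass of hereditary, replacing every $1$ appearing inside $s \cdot w \cdot t$ by $0$, then produces $u \cdot 0^{3n + |s| + |t|} \cdot v \in \lang(X)$.

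The crux is trimming the middle block from length $3n + |s| + |t|$ back to exactly $n$: the hereditary hypothesis substitutes $1$'s by $0$'s, but cannot delete letters, so this is where the argument must be sharpest. The expected way to close this step is to exploit $0^n \leq w$ coordinate-wise directly, by combining the points $x, x' \in X$ that witness $u \cdot 0^n$ and $0^n \cdot v$ into a single point of $X$ whose prefix is $u \cdot 0^n \cdot v$, with the synchronizing property of $w$ supplying the coherence needed along the overlapping $0^n$ block. Equivalently, it suffices to establish the predecessor-set identity $P(w) = P(0^n)$ and its dual $S(w) = S(0^n)$ for the given hereditary shift; granted these, $u \in P(0^n)$ forces $u w \in \lang(X)$ and $v \in S(0^n)$ forces $w v \in \lang(X)$, so $u w v \in \lang(X)$ by the synchronizing property of $w$, and a final hereditary step descends to $u \cdot 0^n \cdot v \in \lang(X)$.
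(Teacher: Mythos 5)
The paper leaves this proof to the reader, so there is no argument of theirs to compare against; judged on its own, your proposal correctly establishes $0^n\in\lang(X)$ (zero out the first $n$ coordinates of a point of $[w]\cap X$ and invoke hereditarity), but the synchronizing identity is not proved, and the gap you yourself flag cannot be closed along the lines you suggest. Your final reduction requires $P(0^n)=P(w)$ and $S(0^n)=S(w)$, but hereditarity only yields $P(w)\subseteq P(0^n)$ and $S(w)\subseteq S(0^n)$: one may lower the block $w$ to $0^n$ inside a point of $X$, never raise $0^n$ back to $w$. The reverse inclusions are exactly the content of the lemma and do not follow. Likewise, ``combining the witnesses of $u0^n$ and $0^nv$ with the synchronizing property of $w$ supplying the coherence'' is not an argument: $w$ need not occur anywhere near the junction, so its synchronizing property never gets to act. (A smaller issue: the lemma does not assume transitivity, which your routing step uses; this does not matter in the end, as the example below is even mixing.)

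In fact no completion of the plan exists, because the statement as written is false. Let $X\subseteq\{0,1\}^{\N_0}$ be the shift of finite type with forbidden words $11$ and $101$ (any two $1$s must be separated by at least two $0$s). Lowering coordinates cannot create either forbidden word, so $X$ is hereditary; it is also surjective, mixing, and has the specification property with gap word $00$. The word $w=1$ is synchronizing: $u1\in\lang(X)$ forces the last two letters of $u$ (when present) to be $0$, and symmetrically $1v\in\lang(X)$ forces the first two letters of $v$ to be $0$, whence $u1v\in\lang(X)$. Yet $0=0^{|w|}$ is not synchronizing, since $10,01\in\lang(X)$ while $101\notin\lang(X)$. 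What is true, and what Theorem \ref{thm_proximal_complete} actually needs, is that a hereditary surjective shift with the specification property (with gap length $N$) admits $0^N$ as a synchronizing word: $u0^N,0^Nv\in\lang(X)$ give $u,v\in\lang(X)$, specification gives $uav\in\lang(X)$ for some $a\in\lang_N(X)$, and hereditarity lowers $a$ to $0^N$. So the correct statement replaces $0^n$ by $0^m$ for a suitable, possibly larger, $m$; proving that version (rather than chasing the length-$n$ claim) is the way to salvage both your argument and the lemma's role in the paper.
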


As a consequence of Lemma \ref{hereditary_shift_sync_word}, we see that if a surjective shift space $X$ is hereditary and synchronized, then there exists a fixed integer $n \in \N$ such that we can connect any word $u \in \lang(X)$ with any point $x \in X$ using a word $0^n$.

\begin{lemma}\label{hereditary_shift_spec_sync_word}
    Let $X$ be a hereditary surjective shift space and $w \in~\lang_n(X)$ be a synchronizing word for $X$. If $u\in\lang(X)$ and $x \in X$, then $u0^nx \in X$. 
\end{lemma}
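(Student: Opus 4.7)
The plan is to reduce the statement to two separate tasks: showing that $u$ can be followed by $n$ zeros as a finite word (so $u0^n\in\lang(X)$), and showing that $x$ can be preceded by $n$ zeros as an infinite sequence (so $0^nx\in X$). Once both hold, Lemma \ref{hereditary_shift_sync_word} provides a synchronizing word $0^n$ that glues the two halves together. So the first thing I would do is invoke Lemma \ref{hereditary_shift_sync_word} to replace the given $w$ with the all-zero word $0^n$, which will be the real synchronizing word used in the rest of the argument.

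For the first ingredient, $u0^n\in\lang(X)$, I would take any point $y\in X$ with $y_{[0,|u|)}=u$, and consider the point $z\in \alf^\infty$ obtained by keeping the first $|u|$ coordinates of $y$ and replacing all subsequent ones by $0$. Since $z\leq y$ coordinate-wise and $X$ is hereditary, $z\in X$. Its prefix of length $|u|+n$ is $u0^n$, so $u0^n\in\lang(X)$.

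For the second ingredient, $0^nx\in X$, I would argue by induction on $k\in[0,n]$ that $0^kx\in X$. The base case $k=0$ is the hypothesis. For the inductive step, given $0^kx\in X$, surjectivity provides $y\in X$ with $\sigma(y)=0^kx$; then $z=0\cdot \sigma(y)=0^{k+1}x$ differs from $y$ only by possibly lowering $y_0$ to $0$, so $z\leq y$ coordinate-wise and heredity gives $z\in X$. Taking $k=n$ yields $0^nx\in X$, and in particular $0^n x_{[0,k)}\in\lang(X)$ for every $k\in\N_0$.

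Finally, I would combine both ingredients using that $0^n$ is a synchronizing word: from $u0^n\in\lang(X)$ and $0^nx_{[0,k)}\in\lang(X)$, the synchronization property gives $u0^nx_{[0,k)}\in\lang(X)$ for every $k\in\N_0$. Since every finite prefix of $u0^nx$ is either a prefix of $u0^n$ (hence in $\lang(X)$) or of the form $u0^nx_{[0,k)}$, all finite prefixes of $u0^nx$ lie in $\lang(X)$, and as $X$ is closed, we conclude $u0^nx\in X$. The only mildly delicate point is the surjectivity-plus-heredity induction in the second ingredient; everything else reduces to unwinding the definitions.
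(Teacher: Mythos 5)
Your proof is correct and follows essentially the same route as the paper's: obtain $u0^n\in\lang(X)$ by heredity, obtain $0^nx\in X$ by combining surjectivity with heredity, and glue the two halves with the synchronizing word $0^n$ supplied by Lemma~\ref{hereditary_shift_sync_word}. The only difference is that you spell out the surjectivity-plus-heredity step as a coordinate-by-coordinate induction and make the final prefix/closedness argument explicit, both of which the paper leaves implicit.
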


 \begin{proof}
     Note that, by hereditarity, $u0^m \in \lang(X)$ for every $u \in \lang(X)$ and $m \in \N$. 
    
     Since $X$ is surjective, for every $x \in X$ and $n \in \N$ there exists a word $v \in \lang_n(X)$ such that $vx \in X$.
     By hereditarity, we can assume that $v=0^n$, that is, $0^nx \in X$. 
    
     It follows from Lemma \ref{hereditary_shift_sync_word} that $0^n$ is a synchronizing word for $X$. Hence, for every $v \in \lang(X)$ that is prefix of $x$ we have $u0^nv \in \lang(X)$, and consequently, $u0^nx \in X$. 
 \end{proof}

The original specification property introduced by Bowen \cite{Bowen} for general topological dynamical systems has an equivalent formulation for shift spaces.

\begin{defi}
    Let $X$ be a shift space. We say that $X$ has the \textbf{specification property} if there exists $n\in \N$ such that for every $u,v\in \lang(X)$ there is $w\in\lang_n(X)$ such that $uwv\in \lang(X)$. 
\end{defi}

The class of shift spaces with the specification property is quite broad and contains, for example, the class of mixing sofic shifts.

\begin{prop}[Section 6 of \cite{Weiss}]
    Every mixing sofic shift space has the specification property.
\end{prop}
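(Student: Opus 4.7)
The plan is to exploit the finite labelled-graph presentation of a sofic shift together with the Perron--Frobenius theorem. First, I would pass to a presentation $G = (V, E, \tau)$ of $X$ (so $X = X_G$ with $|V|<\infty$) whose underlying directed graph is both strongly connected and aperiodic, meaning that the greatest common divisor of the lengths of its closed paths equals $1$. Since $X$ is sofic and mixing, such a presentation exists; for instance, one may take the Fischer cover of $X$ and argue that its irreducibility is forced by the transitivity of $X$ and its aperiodicity by the mixing assumption. This reduction from ``mixing as a system'' to ``primitive as a graph'' is the step I expect to require the most care, since the choice of presentation genuinely matters: an arbitrary presentation of a mixing sofic shift need not have a primitive underlying graph, and one must verify that trimming $G$ to the appropriate strongly connected component does not alter $X_G$.

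Once such a $G$ is fixed, I would apply the Perron--Frobenius theorem to the non-negative integer adjacency matrix $A$ of the underlying directed graph of $G$. Primitivity of this graph yields an integer $N \in \N$ such that every entry of $A^N$ is strictly positive. Interpreted in graph-theoretic terms, this says that for every ordered pair of vertices $(p, q) \in V \times V$, there is at least one directed path in $G$ of length exactly $N$ starting at $p$ and ending at $q$.

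With $N$ in hand, the specification property follows immediately. Given any $u, v \in \lang(X)$, pick finite paths $\pi_u$ and $\pi_v$ in $G$ whose label sequences are $u$ and $v$ respectively; let $p$ be the terminal vertex of $\pi_u$ and $q$ the initial vertex of $\pi_v$. The previous step produces a path $\pi$ of length exactly $N$ from $p$ to $q$; setting $w = \tau(\pi) \in \lang_N(X)$, the concatenation $\pi_u \pi \pi_v$ is a path in $G$ whose label is $uwv$, so $uwv \in \lang(X)$. Since $N$ depends only on the presentation $G$ (and hence only on $X$), not on $u$ or $v$, this is precisely the specification property with gap length $N$ in the sense defined just above the proposition.
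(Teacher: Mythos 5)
The paper does not prove this proposition; it is quoted from Section 6 of \cite{Weiss}, so there is no in-paper argument to compare against. Your proof is correct and is the standard one: pass to a finite, strongly connected, aperiodic (primitive) labelled presentation, get a uniform transition length $N$ with $A^N>0$ from Perron--Frobenius (or just Wielandt's bound), and read off the connecting word $w\in\lang_N(X)$ along a length-$N$ path; since an irreducible presentation is essential, every word of $\lang(X)$ really is the label of a finite path, so the concatenation step is legitimate. You correctly isolate the only delicate point, namely that mixing of $X$ forces aperiodicity of the Fischer cover (irreducibility of the cover for a transitive sofic shift being standard). To close that step one can argue via a synchronizing word $u$ for the minimal right-resolving presentation: every path labelled $u$ ends at the same vertex $I$, so any $w$ with $uwu\in\lang(X)$ yields a cycle at $I$ of length $|w|+|u|$; if the cover had period $p\ge 2$ this would constrain $|w|$ to a single residue class modulo $p$, contradicting the mixing definition used in the paper, which demands connecting words of every sufficiently large length. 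With that fact supplied (it is Proposition 4.5.10 territory in Lind--Marcus), your argument gives specification with the uniform gap $N$ exactly as the paper defines it.
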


\begin{remark}
    We remark that there are mixing shift spaces without the specification property (see \cite{Jung, KLO}). 
\end{remark}

In 1988, Bertrand \cite{Bertrand} proved that shift spaces with specification always have synchronizing words.

\begin{theorem}[Theorem 1 in \cite{Bertrand}]
    Let $X$ be a shift space. If $X$ has the specification property, then $X$ is synchronized.
\end{theorem}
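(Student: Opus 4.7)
The plan is to establish the two defining conditions of being synchronized---transitivity and the existence of a synchronizing word---separately. Transitivity is immediate: for any $u,v\in\lang(X)$, the specification property provides $w\in\lang_n(X)$ (where $n$ is the gap) with $uwv\in\lang(X)$, so the real work is to exhibit a synchronizing word.

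For the synchronizing word, I would exploit the finiteness of $\lang_n(X)\subseteq\{0,1\}^n$. To each $w\in\lang_n(X)$ associate the follower set $F(w)=\{v\in\lang(X) : wv\in\lang(X)\}$, obtaining a finite family of sets. I would pick $w^*\in\lang_n(X)$ whose follower set $F(w^*)$ is minimal with respect to inclusion within this family, and claim that $w^*$ is a synchronizing word. The inclusion $F(uw^*)\subseteq F(w^*)$ always holds (any extension of $uw^*$ gives an extension of $w^*$), so the task reduces to upgrading this to equality: given $u,v\in\lang(X)$ with $uw^*,w^*v\in\lang(X)$, if $F(uw^*)=F(w^*)$, then $v\in F(w^*)=F(uw^*)$ yields $uw^*v\in\lang(X)$.

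To force the equality $F(uw^*)=F(w^*)$, the idea is to iterate specification: for each $k\in\N$, successive applications yield bridge words $w'_1,\ldots,w'_k\in\lang_n(X)$ with $uw^*w'_1w^*w'_2\cdots w'_kw^*v\in\lang(X)$. Since $\lang_n(X)$ is finite, a pigeonhole argument picks some $w'\in\lang_n(X)$ that arises as a bridge for arbitrarily large $k$, and then a compactness argument on the closed right-infinite follower sets $F_\infty(w)=\{x\in X : wx\in X\}\subseteq X$ should let me pass to a limit and collapse the intermediate bridges, concluding $uw^*v\in\lang(X)$.

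The main obstacle is precisely this elimination of the bridge words. The minimality of $F(w^*)$ is guaranteed only among length-$n$ words, whereas $F(uw^*)$ is the follower set of a longer word, so minimality alone is not enough. I would likely circumvent this by working on the level of right-infinite follower sets, where the compactness of $X$ provides the needed control, or alternatively by replacing $w^*$ by a concatenation of the form $w^*w''w^*$ for a suitably chosen $w''\in\lang_n(X)$ coming from specification, so that the resulting word has a follower set invariant under every left extension consistent with the language. Either route, combined with the finiteness of $\lang_n(X)$, should close the argument and produce the desired synchronizing word.
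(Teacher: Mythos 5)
The paper does not prove this statement; it is imported as Theorem 1 of Bertrand's work, so there is no internal proof to compare against. Judged on its own, your proposal has a genuine gap, and it is precisely the one you flag yourself. Choosing $w^*\in\lang_n(X)$ with follower set $F(w^*)$ inclusion-minimal among the finitely many follower sets of length-$n$ words says nothing about $F(uw^*)$, because $uw^*$ does not belong to the family over which you minimized. Even the stronger choice of a $w^*$ minimizing the cardinality of $F(w^*)\cap\alf^n$ over \emph{all} of $\lang(X)$ (which exists, again by finiteness) only forces the length-$\le n$ followers of $uw^*$ and of $w^*$ to coincide, which is far weaker than $F(uw^*)=F(w^*)$: the word $v$ you need to transport may be arbitrarily long. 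So the reduction to ``upgrade $F(uw^*)\subseteq F(w^*)$ to equality'' is correct, but nothing in the sketch achieves it.

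The proposed repairs do not close this gap. Iterating specification yields words $uw^*w'_1w^*\cdots w'_kw^*v\in\lang(X)$, but every word in this family strictly contains the inserted bridges; none of them has $uw^*v$ as a subword in the required position. Since the topology of $X$ is that of agreement on long prefixes, any limit point obtained by pigeonholing the bridges and passing to a subsequence is an infinite sequence that still carries the inserted blocks --- there is no limiting or compactness mechanism on the sets $F_\infty(w)$ that deletes finitely many interior symbols from a word already certified to be in $\lang(X)$. Specification is purely an insertion device, while synchronization demands a concatenation with \emph{no} gap, and the sketch never converts one into the other. The fallback of replacing $w^*$ by $w^*w''w^*$ runs into the identical obstruction: its follower set is again controlled only against words of one fixed length. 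The transitivity half of the statement is fine, but the existence of a synchronizing word --- the entire content of Bertrand's theorem --- is not established.
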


We will abuse the notation and denote by $\dbar$ also the pseudometric on $\FS$ induced by the upper asymptotic density given for $x,y \in \FS$ by
$$\dbar(x,y) = \dbar(\left\{ n \in \N_0: x_n \neq y_n\right\}) = \limsup_{n \to \infty} \frac{1}{n} \left|\left\{i \in [0,n) : x_i \neq y_i\right\}\right|. $$

\begin{prop}[Corollary 5 in \cite{KLO2}]\label{equivalence_dbar_besicovitch}
    The dynamical Besicovitch and the $\dbar$ pseudometrics on $\alf^\infty$ are uniformly equivalent.
\end{prop}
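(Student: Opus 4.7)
The plan is to establish the two-sided inequality $\dbar(x,y)\leq\distBT(x,y)\leq 2\,\dbar(x,y)$ for every $x,y\in\FS$, from which uniform equivalence of the two pseudometrics is immediate: given $\vep>0$, one chooses $\de=\vep$ in one direction and $\de=\vep/2$ in the other.

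To set this up, I would fix $x,y\in\FS$ and put $A=\{i\in\N_0:x_i\neq y_i\}$. By the definition of $\rho$ on $\FS$, for every $i\in\N_0$ one has $\rho(\si^i(x),\si^i(y))=2^{-d(i)}$, where $d(i)=\min\{j\geq 0:i+j\in A\}$ (with the convention $d(i)=\infty$ and $2^{-\infty}=0$ if $A\cap[i,\infty)=\emptyset$). Since $d(i)=0$ whenever $i\in A$, this gives the pointwise bound $\rho(\si^i(x),\si^i(y))\geq \mathbb{1}_A(i)$. Summing over $i\in[0,n)$, dividing by $n$, and taking $\limsup$ yields the lower estimate $\dbar(x,y)\leq\distBT(x,y)$.

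For the reverse inequality, my plan is to group the contributions to $\sum_{i=0}^{n-1}2^{-d(i)}$ according to the witness $a=i+d(i)\in A$. Enumerating $A=\{a_1<a_2<\cdots\}$ and setting $a_0=-1$, the indices whose witness is $a_k$ are exactly those in $(a_{k-1},a_k]$, and their joint contribution equals
$$\sum_{j=0}^{a_k-a_{k-1}-1}2^{-j}<2.$$
Hence each $a_k\in A\cap[0,n)$ contributes less than $2$, while at most one further witness $a_k\geq n$ (the smallest such element, if it exists) contributes through the tail of its interval, and this tail is again bounded by a constant. Combining these observations gives
$$\sum_{i=0}^{n-1}\rho(\si^i(x),\si^i(y))<2\,|A\cap[0,n)|+2,$$
so dividing by $n$ and taking $\limsup$ produces $\distBT(x,y)\leq 2\,\dbar(x,y)$.

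The only delicate step is the bookkeeping that assigns each position $i\in[0,n)$ to its closest subsequent element of $A$; equivalently, one rewrites the Besicovitch sum as a double sum over pairs $(i,a)$ and interchanges the order of summation. The boundary correction coming from witnesses $a\geq n$ is a single geometric tail that vanishes after normalization, so it presents no real difficulty.
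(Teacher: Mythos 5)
Your argument is correct, but it is worth pointing out that the paper itself does not prove this proposition at all: it is quoted verbatim as Corollary~5 of \cite{KLO2}, so you have supplied a self-contained proof where the authors rely on a citation. Your route is also sharper than mere uniform equivalence: the pointwise bound $\rho(\si^i(x),\si^i(y))\geq 1$ for $i\in A$ gives $\dbar(x,y)\leq\distBT(x,y)$, and the regrouping of $\sum_{i=0}^{n-1}2^{-d(i)}$ by witnesses $a_k=i+d(i)$, with each block $(a_{k-1},a_k]$ contributing a partial geometric sum below $2$ and the single boundary witness $a_k\geq n$ contributing a tail below $2$, gives the Lipschitz estimate $\distBT(x,y)\leq 2\,\dbar(x,y)+o(1)$ after normalization, hence $\distBT\leq 2\,\dbar$. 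All the bookkeeping checks out: the blocks $(a_{k-1},a_k]$ (with $a_0=-1$) do partition $\N_0$ up to the tail where $A\cap[i,\infty)=\emptyset$, and the final passage to the $\limsup$ is legitimate because you bound the Ces\`aro averages termwise before taking the $\limsup$. The cited proof in \cite{KLO2} proceeds instead through the intermediate pseudometric $\dist_B'$ defined via upper densities of level sets (the one appearing in Remark~\ref{equivalence_pseudometrics}), which yields uniform equivalence without an explicit bi-Lipschitz constant; your direct computation buys the explicit constant $2$, exploits the specific form of the metric on $\alf^\infty$, and is arguably more elementary. One cosmetic caution: the two-sided inequality shows the identity map is uniformly continuous in both directions with the moduli you state ($\de=\vep$ and $\de=\vep/2$), which is exactly what uniform equivalence requires, so no further argument is needed there.
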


We will discuss convergence of sequences of points in $\FS$ with respect to the $\dbar$ pseudometric. We remark that since $\dbar$ is a pseudometric, a limit point, if it exists, might not be unique. We define the notion of Cauchy sequence and completeness with respect to the $\dbar$ pseudometric in the natural way.
    
	\begin{defi}
		We say that a sequence $\{z^{(n)}\}_{n=1}^\infty \subset X$ is \textbf{$\dbar$-Cauchy} if for every $\vep>0$ there exists $N \in \N$ such that for every $n,m \geq N$ we have
		$$\dbar(z^{(n)},z^{(m)}) <\vep.$$
	\end{defi}
	
	\begin{defi}
		We say that a shift space $X$ is \textbf{$\dbar$-complete} if for every $\dbar$-Cauchy sequence $\{z^{(n)}\}_{n=1}^{\infty} \subset X$ there exists $z \in X$ such that
		$$\lim_{n \to \infty }\dbar(z^{(n)},z) = 0.$$
	\end{defi}

    By the uniform equivalence between the Besicovitch and $\dbar$ pseudometrics (Proposition \ref{equivalence_dbar_besicovitch}), it follows from Corollary 30 in \cite{LK17} that the asymptotic average shadowing property is a sufficient condition for $\dbar$-completeness.
    
    \begin{prop}
        Let $X$ be a shift space. If $X$ has the asymptotic average shadowing property, then $X$ is $\dbar$-complete.
    \end{prop}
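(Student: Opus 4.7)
The plan is to deduce $\dbar$-completeness from Besicovitch completeness by exploiting the uniform equivalence between the two pseudometrics together with an existing result for general topological dynamical systems. The main tool is Corollary 30 in \cite{LK17}, which states that in any topological dynamical system the asymptotic average shadowing property implies Besicovitch completeness (the same fact was already invoked in the proof of Theorem \ref{equiv_AASP_ASP+Besicovitch}). Combining this with Proposition \ref{equivalence_dbar_besicovitch}, which asserts that the dynamical Besicovitch and $\dbar$ pseudometrics on $\FS$ are uniformly equivalent, should immediately give the desired conclusion.

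Concretely, I would start by fixing a $\dbar$-Cauchy sequence $\{z^{(n)}\}_{n=1}^\infty \subset X$. By uniform equivalence, for any $\vep>0$ there exists $\de>0$ such that $\dbar(u,v)<\de$ implies $\distBT(u,v)<\vep$; applied to the tails of our sequence, this shows that $\{z^{(n)}\}_{n=1}^\infty$ is also Besicovitch-Cauchy as a sequence in the topological dynamical system $(X,\si)$. Since $X$ has the asymptotic average shadowing property, Corollary 30 in \cite{LK17} yields that $X$ is Besicovitch complete, so there exists $z \in X$ with $\distBT(z^{(n)},z) \to 0$. Applying the uniform equivalence in the opposite direction, we conclude that $\dbar(z^{(n)},z) \to 0$, hence $X$ is $\dbar$-complete.

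There is no serious obstacle here. The only bookkeeping step is to confirm that uniform equivalence of pseudometrics transfers both the Cauchy condition and the convergence of the limit, but this is an immediate consequence of the quantitative two-sided estimate packaged into uniform equivalence. The statement is essentially a restatement, within the language of shift spaces, of the general theorem from \cite{LK17}.
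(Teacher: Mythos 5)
Your argument is exactly the one the paper intends: the proposition is stated as an immediate consequence of Corollary 30 in \cite{LK17} (AASP implies Besicovitch completeness) combined with the uniform equivalence of the dynamical Besicovitch and $\dbar$ pseudometrics from Proposition \ref{equivalence_dbar_besicovitch}. Your spelled-out transfer of the Cauchy condition and of the limit through the uniform equivalence is correct and matches the paper's (implicit) reasoning.
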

    
	In the particular case of shift spaces, a notion of tracing called $\dbar$-shadowing property, which is closely related to the average shadowing property, was introduced by Konieczny, Kupsa and Kwietniak in \cite{KKK}.
    \begin{defi}
        We say that a shift space $X$ has the \textbf{$\dbar$-shadowing property} if for every $\vep >0$ there exists $N \in \N$ such that for every sequence $\{w^{(i)}\}_{i=1}^\infty \subset \SL(X)$ with $|w^{(i)}| \geq N$ for every $i \in \N$ there exists $x \in X$ such that 
        $$\dbar(x,w)< \vep,$$
        where $w=w^{(1)}w^{(2)}\ldots \in \alf^\infty$. 
    \end{defi}

    \begin{remark}
        We remark that given a sequence of words $\{w^{(i)}\}_{i=1}^\infty \subset \SL(X)$ their concatenation $w = w^{(1)} w^{(2)}\ldots$ might not belong to $X$.
    \end{remark}

     As mentioned in \cite{KKK}, a shift space with the average shadowing property also possesses the $\dbar$-shadowing property. Moreover, if we assume surjectivity of the shift space, then the $\dbar$-shadowing property implies the average shadowing property. Due to the lack of proofs for these statements, for the sake of completeness, we will prove them.

    \begin{lemma}\label{lemma_N_dbar_apo}
	For every $\de>0$ there exists $N \in \N$ such that if a sequence of words $\{w^{(i)}\}_{i=1}^\infty \subset \lang(X)$ satisfies $|w^{(i)}| \geq N$ for every $i \in \N$, then there exists a $\de$-average pseudo-orbit $\seqx \in X^\infty$ such that $$\dist_B(\seqw_\si, \seqx) < \de,$$
	where $w=w^{(1)}w^{(2)}\ldots \in \alf^\infty$.
\end{lemma}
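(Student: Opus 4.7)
The plan is to construct $\seqx$ by lifting each word $w^{(i)}$ to an initial segment of an orbit in $X$ and concatenating these orbits block by block. For each $i \in \N$, since $w^{(i)} \in \lang(X)$, one can pick $y^{(i)} \in X$ with $y^{(i)}_{[0,|w^{(i)}|)} = w^{(i)}$. Let $s_1 = 0$ and $s_{i+1} = s_i + |w^{(i)}|$, so that $w^{(i)}$ occupies positions $[s_i, s_{i+1})$ inside $w = w^{(1)}w^{(2)}\ldots$, and define $\seqx = \{x_n\}_{n=0}^\infty \in X^\infty$ by
$$x_n = \si^{n-s_i}(y^{(i)}) \quad\text{whenever } n \in [s_i, s_{i+1}).$$

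The key virtue of this construction is that $\si(x_n) = x_{n+1}$ whenever $n$ and $n+1$ lie in the same block, so the errors $\dist(\si(x_n), x_{n+1})$ can be nonzero only at block boundaries, i.e.\ at indices of the form $n = s_{i+1} - 1$. The hypothesis $|w^{(i)}| \geq N$ forces consecutive bad indices to be at least $N$ apart, so any window of length $n$ contains at most $\lceil n/N \rceil$ of them. A direct counting argument, uniform in $k \in \N_0$, then yields
$$\frac{1}{n} \sum_{j=0}^{n-1} \dist(\si(x_{j+k}), x_{j+k+1}) \;\leq\; \frac{\lceil n/N \rceil}{n} \;\leq\; \frac{1}{N} + \frac{1}{n},$$
which falls below $\de$ once $N$ and $n$ both exceed an appropriate multiple of $1/\de$, giving the $\de$-average pseudo-orbit condition of Definition \ref{delta-APO}.

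For the Besicovitch distance, observe that if $n \in [s_i, s_{i+1})$, then $x_n$ and $\si^n(w)$ agree on their first $s_{i+1} - n$ coordinates, because both begin with the tail of $w^{(i)}$ read from position $n - s_i$; hence $\dist(x_n, \si^n(w)) \leq 2^{-(s_{i+1}-n)}$. Summing a geometric series over each full block gives $\sum_{n \in [s_i, s_{i+1})} \dist(x_n, \si^n(w)) \leq 1$, so the partial sum over $[0,n)$ is bounded by the number of blocks meeting this interval, i.e.\ at most $n/N + 1$. Therefore $\dist_B(\seqw_\si, \seqx) \leq 1/N$.

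Choosing $N$ large enough that both $1/N$ and the pseudo-orbit bound fall strictly below $\de$ (for instance, $N > 2/\de$) completes the argument. The only point that requires mild care is the uniformity in $k$ in the $\de$-average pseudo-orbit condition, but this is automatic because the boundary count inside an interval depends only on its length, not on where it starts. No deeper obstacle arises; the lemma reduces to an averaging/counting argument built on the choice of the lifts $y^{(i)}$.
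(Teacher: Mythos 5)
Your proof is correct and follows essentially the same strategy as the paper's: lift each word $w^{(i)}$ into $X$, define $\seqx$ blockwise, and bound both the average pseudo-orbit error and the Besicovitch distance by counting block boundaries, which occur with frequency at most $1/N$. The only difference is cosmetic: you take genuine shifts of a single lift $y^{(i)}$ per block (so the in-block errors vanish exactly and the tracing error decays geometrically toward each block's end), whereas the paper picks an independent point in the appropriate cylinder at every index and controls errors via an exceptional set consisting of the last $m$ positions of each block; your version is, if anything, slightly cleaner.
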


\begin{proof}
	Let $\de >0$. Take $m \in \N$ so that $2^{-m} < \frac{\de}{2}$ and $N \in \N$ satisfying $\frac{m}{N} < \frac{\de}{2}$.
	Fix a sequence of words $\{w^{(i)}\}_{i=1}^\infty \subset \lang(X)$ with $|w^{(i)}| \geq N$ for every $i \in \N$. Let $w = w^{(1)} w^{(2)} \ldots \in \alf^\infty$ denote the concatenation of these words. 
	
	Set $n_1=0$ and $n_i = |w^{(1)} \ldots w^{(i-1)}|$ for $i \geq 2$. We now construct the sequence $\seqx=\{ x^{(n)} \}_{n=0}^\infty \in X^\infty$ as follows. For each $n \in \N$ there exists $i= i(n) \in \N$ such that $n \in [n_i, n_{i+1})$. Hence, we pick
	$$x^{(n)} \in \left[w^{(i)}_{[n-n_i, |w^{(i)}|)}\right] \cap X.$$
	
	Set $E = \bigcup_{i \geq 2} (n_i - m, n_i]$. Note that if $n \in \N_0 \backslash E$, then for its correspondent $i$ as above, it holds that
	$$\left|w^{(i)}_{[n-n_i, |w^{(i)}|)}\right| > m.$$ Thus, by construction, for every $n \in \N_0 \backslash E$ we have $\dist(\si^n(w), x^{(n)}) \leq 2^{-m}$. We observe that $\dbar(E) \leq \frac{m}{N}$, which implies $\dist_B(\underline{w}_\si, \seqx) < \frac{\de}{2}$.
	
	It remains to prove that $\seqx \in X^\infty$ is a $\de$-average pseudo-orbit. Note that, also by construction, for every $n \in \N_0\backslash E$ we have $x^{(n)}_{[1,m]} = x^{(n+1)}_{[0,m)}$, that is, 
	\begin{equation}\label{eq_dist_xn_xn+1}
		\dist(\si(x^{(n)}), x^{(n+1)}) \leq 2^{-m}. 
	\end{equation}
 
    Fix $n \geq N$ and $k \in \N_0$. We first find an upper bound for $|[k, k+n) \cap E|$. Observe that in the worst case scenario we have $n_{i+1}-n_i = N$ for all $i \in \N$. Thus,
	\begin{equation}\label{eq_size_En}
		|[k, k + n) \cap E| \leq \frac{nm}{N}< \frac{n\de}{2}. 
	\end{equation}
	
	We can split the sum below into terms in $E$ and outside $E$. Since the diameter of $X$ is assumed to be at most $1$, using \eqref{eq_dist_xn_xn+1} and \eqref{eq_size_En}, we obtain
	\begin{align*}
		\frac{1}{n} \sum_{i=0}^{n-1} \dist(\si(x^{(k+i)}), x^{(k+i+1)}) &\leq \frac{\left(n - |[k, k + n\right) \cap E|)\cdot 2^{-m}}{n}  + \frac{|[k, k + n) \cap E|}{n}\\
		&< 2^{-m} + \frac{\de}{2} < \de. \qedhere
	\end{align*}
\end{proof}

\begin{theorem}\label{thm_ASP_dbar_shadowing}
	Let $X$ be a shift space. If $X$ has the average shadowing property, then $X$ has the $\dbar$-shadowing property.
\end{theorem}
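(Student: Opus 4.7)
The plan is a short chaining of three ingredients: Lemma~\ref{lemma_N_dbar_apo}, the average shadowing property itself, and the uniform equivalence between $\dbar$ and the dynamical Besicovitch pseudometric on $\FS$ (Proposition~\ref{equivalence_dbar_besicovitch}). The idea is to approximate the concatenation $w=w^{(1)}w^{(2)}\ldots$ in the Besicovitch pseudometric by a genuine $\delta$-average pseudo-orbit in $X^\infty$, shadow that pseudo-orbit on average by the orbit of some point $z\in X$, and then translate the resulting Besicovitch estimate into a $\dbar$ estimate.

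Given $\vep>0$, I first use Proposition~\ref{equivalence_dbar_besicovitch} to choose $\eta>0$ such that any two points $a,b\in\FS$ with $\dist_B(\underline{a}_\si,\underline{b}_\si)<\eta$ satisfy $\dbar(a,b)<\vep$. Then the average shadowing property supplies $\delta_0>0$ such that every $\delta_0$-average pseudo-orbit in $X^\infty$ is $\eta/2$-traced on average by some orbit of a point in $X$. Setting $\delta=\min(\delta_0,\eta/2)$, I let $N=N(\delta)\in\N$ be the integer produced by Lemma~\ref{lemma_N_dbar_apo}.

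Now take an arbitrary sequence of words $\{w^{(i)}\}_{i=1}^\infty\subset\lang(X)$ with $|w^{(i)}|\geq N$ for every $i\in\N$, and set $w=w^{(1)}w^{(2)}\ldots\in\FS$. Lemma~\ref{lemma_N_dbar_apo} provides a $\delta$-average pseudo-orbit $\seqx\in X^\infty$ with $\dist_B(\underline{w}_\si,\seqx)<\delta$. Because $\delta\leq\delta_0$, the average shadowing property yields $z\in X$ with $\dist_B(\underline{z}_\si,\seqx)<\eta/2$, and the triangle inequality for $\dist_B$ then gives
\[
\dist_B(\underline{z}_\si,\underline{w}_\si)\;\leq\;\dist_B(\underline{z}_\si,\seqx)+\dist_B(\seqx,\underline{w}_\si)\;<\;\tfrac{\eta}{2}+\delta\;\leq\;\eta.
\]
By the choice of $\eta$ this forces $\dbar(z,w)<\vep$, which is exactly the $\dbar$-shadowing condition for $X$ with the constant $N$ chosen above.

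I do not expect any real conceptual obstacle: the proof is just the composition of an approximation step (Lemma~\ref{lemma_N_dbar_apo}), a shadowing step (average shadowing), and a change-of-pseudometric step (Proposition~\ref{equivalence_dbar_besicovitch}). The only thing to watch is the bookkeeping of the three constants $\eta,\delta_0,\delta$, so that the final triangle-inequality bound sits below the threshold supplied by the uniform equivalence of $\dbar$ and the dynamical Besicovitch pseudometric. All substantive shift-space content is packaged in Lemma~\ref{lemma_N_dbar_apo}, which is the bridge from block concatenations to point-by-point average pseudo-orbits.
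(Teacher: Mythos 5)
Your proposal is correct and follows essentially the same route as the paper's proof: choose the threshold from the uniform equivalence of $\dbar$ and the Besicovitch pseudometric, pull back through the average shadowing constant and Lemma~\ref{lemma_N_dbar_apo}, and conclude by the triangle inequality for $\dist_B$. The only difference is cosmetic bookkeeping of the constants ($\eta$, $\delta_0$, $\delta=\min(\delta_0,\eta/2)$ versus the paper's $\vep'$ and $\de\leq\vep'/2$), which changes nothing of substance.
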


\begin{proof}
	Let $\vep >0$. By the uniform equivalence between the Besicovitch and the $\dbar$ pseudometrics, there exists $0<\vep'<\vep$ such that for every $w,z \in \alf^\infty$ satisfying $\dist_B(\seqw_\si, \seqz_\si) < \vep'$, we have $\dbar(w,z) < \vep$.
	
	Let $0 < \de \leq \frac{\vep'}{2}$ be such that every $\de$-average pseudo-orbit is $\frac{\vep'}{2}$-traced on average by a point in $X$, and let $N \in \N$ be provided by Lemma \ref{lemma_N_dbar_apo}.
	Fix a sequence of words $\{w^{(i)}\}_{i=1}^\infty \subset \lang(X)$ satisfying $|w^{(i)}| \geq N$ for every $i \in \N$. 
	
	By Lemma \ref{lemma_N_dbar_apo}, there exists a $\de$-average pseudo-orbit $\seqx \in X^\infty$ such that $\dist_B(\seqw_\si, \seqx) < \de$. The average shadowing property yields the existence $z \in X$ such that $\dist_B(\seqx, \seqz_\si) < \frac{\vep'}{2}$. Therefore, by the triangle inequality, we obtain $\dist_B(\underline{w}_\si, \seqz_\si) < \vep'$. Finally, by the choice of $\vep'$, we conclude that $\dbar(w,z) < \vep$.
\end{proof}

Our proof of a converse of Theorem \ref{thm_ASP_dbar_shadowing} requires the shift space to be chain mixing. Lemma 9 in \cite{KKK} guarantees that, in our context, surjectivity is sufficient.

\begin{lemma}[Lemma 9 in \cite{KKK}]\label{lemma_chain_mixing}
	Let $X$ be a surjective shift space. If $X$ has the $\dbar$-shadowing property, then $X$ is chain mixing.
\end{lemma}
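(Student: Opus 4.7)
To prove chain mixing, the plan is to construct, for any $x, y \in X$ and any $\delta > 0$, a $\delta$-pseudo-orbit from $x$ to $y$ of every sufficiently large length $n$. I would first recast this combinatorially: fix $k$ with $2^{-k} < \delta$, so that $\dist(\sigma(x_i), x_{i+1}) < 2^{-k}$ amounts to $(x_i)_{[1, k+1)} = (x_{i+1})_{[0, k)}$. A $\delta$-pseudo-orbit from $x$ to $y$ of length $n$ is then encoded by a $k$-admissible word $w \in \alf^{n+k}$ with $w_{[0, k)} = x_{[0, k)}$, $w_{[n, n+k)} = y_{[0, k)}$, and every length-$k$ subword of $w$ lying in $\lang_k(X)$. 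Conversely, given such a $w$, surjectivity of $X$ lets us realize each $k$-block $w_{[i, i+k)}$ as the prefix of a point in $X$, with $x_0 = x$ and $x_n = y$ imposed at the endpoints. Hence the task reduces to producing such a $k$-admissible connecting word for every sufficiently large $n$.

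To build these connecting words, I would apply the $\dbar$-shadowing property with a small $\vep > 0$, obtaining a constant $N = N(\vep) \in \N$. For a target length $n \geq N$, form the infinite concatenation $u_n = x_{[0, n)}\, y_{[0, \infty)} \in \FS$, viewed as a sequence of $\lang(X)$-words each of length at least $N$ (for instance, $x_{[0, n)}$ followed by the tail $y_{[0, \infty)}$ chopped into further blocks of length $\geq N$). The $\dbar$-shadowing property yields $z_n \in X$ with $\dbar(z_n, u_n) < \vep$, and since $z_n \in X$ every length-$k$ subword of $z_n$ automatically belongs to $\lang_k(X)$. For $\vep$ small relative to $1/k$, the density-$(1-\vep)$ agreement between $z_n$ and $u_n$ forces $z_n$ to contain, with positive density, copies of $x_{[0, k)}$ in the initial segment $[0, n)$ and copies of $y_{[0, k)}$ near and beyond position $n$. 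A pigeonhole argument then selects indices $i^\star$ close to $0$ and $j^\star$ close to $n$ at which these copies occur, and the restriction of $z_n$ to $[i^\star, j^\star + k)$ is a $k$-admissible connector.

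The main obstacle I expect is controlling the exact length $j^\star - i^\star$ of the extracted connector, since $\dbar$-shadowing gives only density-type agreement rather than pointwise control. To cover every sufficiently large integer length, I would run the construction for several values of $n$ simultaneously and invoke a standard numerical-semigroup argument, showing that the set of achievable lengths contains all sufficiently large positive integers once it contains two coprime values. Alternatively, the flexibility of pseudo-orbits --- each interior $x_i$ is constrained only on its first $k$ letters, and each word in $\lang_k(X)$ has many extensions in $X$ by surjectivity --- should permit local adjustments that absorb a bounded length discrepancy and land precisely on the target $n$.
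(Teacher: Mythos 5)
The paper does not prove this lemma; it is imported verbatim as Lemma 9 of \cite{KKK}, so there is no in-paper argument to compare yours against, and your proposal must stand on its own. Your first reduction is fine: modulo an off-by-one in the overlap length, a $\delta$-chain in $X$ from $x$ to $y$ of length $n$ is equivalent to a word $W$ of length $n+k$ whose every length-$k$ window lies in $\lang_k(X)$ and whose prefix and suffix are $x_{[0,k)}$ and $y_{[0,k)}$, each interior point being realized by a point of $X$ in the corresponding cylinder. The fatal step is the attempt to produce $W$ by applying $\dbar$-shadowing to $u_n=x_{[0,n)}y_{[0,\infty)}$. The quantity $\dbar(z_n,u_n)$ is the \emph{upper asymptotic density} of the disagreement set, so it is blind to any density-zero set of coordinates --- in particular to the entire finite initial segment $[0,n)$. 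The block $x_{[0,n)}$ occupies a density-zero portion of $u_n$, so a shadowing point $z_n$ with $\dbar(z_n,u_n)<\vep$ carries no information whatsoever about $x$: it may disagree with $u_n$ at every coordinate of $[0,n)$. Your claim that $z_n$ must contain copies of $x_{[0,k)}$ ``with positive density in the initial segment $[0,n)$'' is therefore unsupported, and the extraction of the connector collapses.

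The natural repair --- concatenate extensions of $x_{[0,k)}$ and $y_{[0,k)}$ alternately and periodically, so that both occur with positive density in the model sequence --- still faces a quantitative circularity that should not be glossed over: the aligned occurrences of $x_{[0,k)}$ in the concatenation have density roughly $1/(2N)$ with $N=N(\vep)$ supplied by the $\dbar$-shadowing property, whereas the disagreement set is only known to have upper density $<\vep$, and nothing prevents $N(\vep)$ from growing much faster than $1/\vep$; hence one cannot directly conclude that the shadowing point contains an exact aligned copy of $x_{[0,k)}$. A workable route is instead to prove (total) chain transitivity and then upgrade to chain mixing via the Richeson--Wiseman criterion (a chain transitive system is chain mixing if and only if it admits no factor onto a nontrivial periodic orbit): the relevant comparisons there are between pairs of positions $i$ and $i+L$, both of which lie in the density-$(1-k\vep)$ agreement set for a positive-density set of $i$, which sidesteps the need to hit a sparse set of prescribed positions. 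Your closing numerical-semigroup paragraph only addresses the bookkeeping of achievable lengths and does not touch this core difficulty.
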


\begin{theorem}
	Let $X$ be a surjective shift space. If $X$ has the $\dbar$-shadowing property, then $X$ has the average shadowing property.
\end{theorem}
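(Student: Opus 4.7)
The plan is to combine Lemma \ref{lemma_chain_mixing} with Theorem \ref{thm_ASP_enough_pseudoorbit}: the $\dbar$-shadowing property forces $X$ to be chain mixing, so it suffices to show that for every $\vep>0$ there is $\de>0$ such that every $\de$-pseudo-orbit in $X^\infty$ is $\vep$-traced on average by some point of $X$.

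Given $\vep>0$, I would first use the uniform equivalence of $\dbar$ with the Besicovitch pseudometric on orbits in $\alf^\infty$ (Proposition \ref{equivalence_dbar_besicovitch}) to pick $\vep'>0$ such that $\dbar(u,v)<\vep'$ for any $u,v\in\alf^\infty$ implies $\dist_B(\{\si^n(u)\}_{n=0}^\infty,\{\si^n(v)\}_{n=0}^\infty)<\vep/2$. Let $N$ be the constant supplied by the $\dbar$-shadowing property for $\vep'$, choose $m\geq N$ with $2^{-m}<\vep/2$, and set $\de=2^{-2m-1}$. Given a $\de$-pseudo-orbit $\seqx=\{x^{(n)}\}_{n=0}^\infty\in X^\infty$, I would cut it into blocks of length $m$ by setting $w^{(k)}=x^{(km)}_{[0,m)}\in\lang_m(X)$ for every $k\in\N_0$, and then form the concatenation $w=w^{(0)}w^{(1)}w^{(2)}\ldots\in\alf^\infty$. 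The $\dbar$-shadowing property then yields $z\in X$ with $\dbar(w,z)<\vep'$, and consequently $\dist_B(\seqz_\si,\seqw_\si)<\vep/2$.

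The crux is to show that $\dist_B(\seqw_\si,\seqx)\leq 2^{-m}$. Since $\si$ is $2$-Lipschitz with respect to $\rho$, iterating the $\de$-pseudo-orbit condition $j$ times gives
$$\rho(\si^j(x^{(km)}),x^{(km+j)})\leq(2^j-1)\de\leq 2^{-m-1}$$
for every $k\in\N_0$ and $j\in[0,m]$; equivalently, the first $m+1$ coordinates of these two points coincide. For $n=km+j$ with $j\in[0,m)$ and any $i\in[0,m)$, a short case analysis (if $j+i<m$, both $x^{(n)}_i$ and $w_{n+i}$ equal $x^{(km)}_{j+i}$; if $j+i\in[m,2m)$, both equal $x^{((k+1)m)}_{j+i-m}$) yields $x^{(n)}_i=w_{n+i}$. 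Hence $\rho(\si^n(w),x^{(n)})\leq 2^{-m}$ for every $n\in\N_0$, so $\dist_B(\seqw_\si,\seqx)\leq 2^{-m}<\vep/2$, and the triangle inequality closes the proof. The main obstacle I anticipate is precisely this case analysis, together with the careful book-keeping of how the iterated $\de$-errors propagate across block boundaries while ensuring that the concatenated codeword $w$ encodes the block prefixes of $\seqx$ faithfully.
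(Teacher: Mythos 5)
Your proposal is correct and follows essentially the same route as the paper: chain mixing via Lemma \ref{lemma_chain_mixing}, reduction to $\de$-pseudo-orbits via Theorem \ref{thm_ASP_enough_pseudoorbit}, cutting the pseudo-orbit into length-$m$ blocks whose concatenation is then $\dbar$-shadowed, and a closing triangle inequality. The only (harmless) difference is in the bookkeeping near block boundaries: you take $\de=2^{-2m-1}$ so that $\si^n(w)$ and $x^{(n)}$ agree on their first $m$ coordinates for \emph{every} $n$, whereas the paper takes $\de\le 2^{-m}$ and instead discards an exceptional set of indices of upper density at most $N/m$.
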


\begin{proof}
	Let $\vep >0$. By the uniform equivalence between the Besicovitch and the $\dbar$ pseudometrics, there exists $\vep'>0$ such that $\dist_B(\seqw_\si, \seqz_\si) < \frac{\vep}{2}$ for any $w,z \in \alf^\infty$ so that $\dbar(w,z) < \vep'$.
	
	Let $N \in \N$ be provided by the $\dbar$-shadowing property for $\vep'$. Without loss of generality, we may assume that $2^{-N} < \frac{\vep}{2}$. By Lemma \ref{lemma_chain_mixing}, $X$ is chain-mixing, so by Theorem \ref{thm_ASP_enough_pseudoorbit}, it remains to prove that there exists $\de >0$ such that every $\de$-pseudo-orbit is $\vep$-traced on average by some point $z \in X$.
	
	Let $m \in \N$ be such that $\frac{N}{m} < \frac{\vep}{2}$ and $0 < \de < \frac{\vep}{2}$ be such that $\de \leq 2^{-m}$. Fix a $\de$-pseudo-orbit $\seqx \in X^\infty$. Note that $\de \leq 2^{-m}$ implies $x^{(n)}_{[1,m]} = x^{(n+1)}_{[0,m)}$ for every $n \in \N_0$. As a consequence, for every $k \in [0,m)$ the following holds
	\begin{equation}\label{eq_PO_consec_terms}
		x^{(n)}_{[k,m)} = x^{(n+k)}_{[0,m-k)}.
	\end{equation}

    For each $i \in \N$ we set $w^{(i)} = x^{(i-1)m}_{[0,m)} \in \lang_m(X)$. Let $w = w^{(1)}w^{(2)}\ldots \in \alf^\infty$ denote the concatenation of $w^{(i)}$'s.

    For every $n \in \N_0$ there exists $i=i(n) \in \N$ such that $n \in [(i-1)m, im)$. It follows from \eqref{eq_PO_consec_terms} that
	\begin{equation}\label{eq_wnim_xn}
		w_{[n, im)} = x^{(i-1)m}_{[n-(i-1)m, m)}= x^{(n)}_{[0, im - n)}.
	\end{equation}
	
	We will use an argument similar to the one in Lemma \ref{lemma_N_dbar_apo} to prove that $\dist_B(\seqw_\si, \seqx) < \de$. Set $E = \cup_{i \in \N} (im - N, im]$. It follows from \eqref{eq_wnim_xn} that for every $n \in \N_0 \backslash E$ it holds that $\dist(\si^n(w), x^{(n)}) \leq 2^{-N}$. Note that $\dbar(E) \leq \frac{N}{m}$, thus $\dist_B(\seqw_\si, \seqx) < \frac{\vep}{2}.$
	
	Now we use the $\dbar$-shadowing property to find $z \in X$ such that 
	$\dbar(w,z) <\vep'$. Lastly, by the triangle inequality, the choice of $\vep'$ implies $\dist_B(\seqx, \seqz_\si) < \vep.$
\end{proof}
            
Since the $\dbar$-shadowing and the average shadowing properties are equivalent for surjective shift spaces, the equivalence of $\dbar$ and Besicovitch pseudometrics, together with Theorem \ref{equiv_AASP_ASP+Besicovitch}, yields the following corollary. 
        
    \begin{corollary}\label{cor:AASP_equiv_dbar-shadow}
        Let $X$ be a surjective shift space. Then $X$ has the asymptotic average shadowing property if and only if $X$ is $\dbar$-complete and has the $\dbar$-shadowing property.  
    \end{corollary}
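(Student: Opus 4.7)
The plan is to combine three ingredients that have already been assembled in this section: Theorem \ref{equiv_AASP_ASP+Besicovitch}, which characterizes the asymptotic average shadowing property for surjective systems as Besicovitch completeness together with the average shadowing property; the equivalence between the average shadowing property and the $\dbar$-shadowing property for surjective shift spaces (Theorem \ref{thm_ASP_dbar_shadowing} together with its converse established immediately before the corollary); and Proposition \ref{equivalence_dbar_besicovitch}, which asserts that the dynamical Besicovitch and $\dbar$ pseudometrics are uniformly equivalent on $\FS$.

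First, I would translate the completeness hypothesis. From the uniform equivalence of $\distB$ and $\dbar$, for every $\vep>0$ there is $\de>0$ such that $\distBT(x,y)<\de$ implies $\dbar(x,y)<\vep$, and symmetrically. Feeding this into the definitions of Cauchy sequences and convergence, a sequence $\{z^{(n)}\}_{n=1}^{\infty}\subset X$ is Besicovitch-Cauchy if and only if it is $\dbar$-Cauchy, and $\distBT(z^{(n)},z)\to 0$ if and only if $\dbar(z^{(n)},z)\to 0$. Therefore $X$ is Besicovitch complete if and only if $X$ is $\dbar$-complete. This is the only translation step required at the level of the phase space.

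With this translation in hand, the corollary is a direct chain of equivalences. For a surjective shift space $X$: by Theorem \ref{equiv_AASP_ASP+Besicovitch}, $X$ has the asymptotic average shadowing property if and only if $X$ is Besicovitch complete and has the average shadowing property; by the previous paragraph and the equivalence of the average shadowing and $\dbar$-shadowing properties for surjective shift spaces, this holds if and only if $X$ is $\dbar$-complete and has the $\dbar$-shadowing property. I do not anticipate any real obstacle here: all the substantive work (Theorem \ref{equiv_AASP_ASP+Besicovitch} via Besicovitch completeness, and the mutual implication between average shadowing and $\dbar$-shadowing via Lemma \ref{lemma_N_dbar_apo} and Lemma \ref{lemma_chain_mixing}) has already been done, and the corollary is essentially a dictionary entry translating Theorem \ref{equiv_AASP_ASP+Besicovitch} into the language intrinsic to shift spaces.
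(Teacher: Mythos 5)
Your proposal is correct and follows exactly the paper's route: it chains Theorem \ref{equiv_AASP_ASP+Besicovitch} with the equivalence of the average and $\dbar$-shadowing properties for surjective shift spaces, using the uniform equivalence of the Besicovitch and $\dbar$ pseudometrics (Proposition \ref{equivalence_dbar_besicovitch}) to identify Besicovitch completeness with $\dbar$-completeness. The explicit translation of the Cauchy/convergence conditions is a correct and slightly more detailed spelling-out of what the paper leaves implicit.
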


    \section{Examples of shift spaces with the vague specification property}\label{section_examples}
    \subsection{Proximal shift space with the vague specification property}

    We will show that the shift space $Z$, defined in Example 21 in \cite{CKKK}, has the vague specification property. Since it is known that $Z$ has the $\dbar$-shadowing property, it is enough to show that $Z$ is 
    \hbox{$\dbar$-complete} and use Corollary \ref{cor:AASP_equiv_dbar-shadow}. 
    Before showing that $Z$ is \nolinebreak $\dbar$-complete we recall its construction following \cite{CKKK}.
\begin{example}\label{ex:proximal}
Let $n\in\N$. We construct a sofic shift $Z_n$ represented by an oriented labeled graph $G_n = (V_n,E_n,\tau_n)$ with vertex set $V_n=\{v_0, v_1, \ldots,v_{10^n-1}\}$ whose edge set $E_n$ and labels $\tau_n : E_n \to \{0,1\}$ are given by
\begin{itemize}
\item For every $0 \leq k < 10^n$ there is an edge from $v_k$ to $v_{k+1 \mod 10^n}$ with label $0$.
\item For every $1\leq k\leq 10^n-2^n$ there is an edge from $v_k$ to $v_{k+1}$ with label $1$.
\item There is an edge from $v_{10^n-2^n}$ to $v_{10^n-2^n+2}$ with label $0$.
\end{itemize}
Let $Z = \bigcap_{n=1}^\infty Z_n$. We remark that for every $n \in \N$ we have $Z_{n+1} \not \subset Z_{n}$ and $Z_n \not \subset Z_{n+1}$.
For each $n \in \N$ we set $Y_n=\bigcap_{j=1}^n Z_n$. This defines a decreasing sequence of mixing sofic shifts $Y_1\supseteq Y_2\supseteq \dots$ with $Z=\bigcap_{n=1}^\infty Y_n$. 

We denote
\begin{equation}\label{eq_sets_Ej}
    \tilde{E}_j = \{ n \in \N : n \geq 10^j - 2^j  \mod 10^j \} \quad \text{and} \quad
        E_i = \bigcup_{j \geq i} \tilde{E}_j.
\end{equation}
Note that 
        \begin{equation}\label{ineq_density_En}
            \dbar(E_i) \leq \frac{1}{4 \cdot 5^{i-1}}<5^{-i+1}.
        \end{equation}
        
The crucial observation about $Z$, that we will repeatedly use below, is that if $n \in \N$ and $y \in Y_n$, then it is enough to set $y_j$ to $0$ for every $j \in E_n$ to obtain a point in $Z$.
\end{example}
    
    \begin{theorem}\label{thm_proximal_complete}
        The shift space $Z$ is $\dbar$-complete.
    \end{theorem}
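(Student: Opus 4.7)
My plan combines the $\dbar$-completeness of each $Y_n$ with the crucial zero-out-$E_n$ observation: by progressively modifying approximants in $Y_n$ so that the resulting points in $Z$ agree on longer and longer initial segments, I take a product-topology limit that lies in $Z$ and $\dbar$-approximates the given $\dbar$-Cauchy sequence.

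Let $\{x^{(k)}\}_{k\in\N}\subseteq Z$ be $\dbar$-Cauchy. For each $n$, $Y_n$ is a mixing sofic shift, hence has the specification property, hence the weak specification property, hence the asymptotic average shadowing property by \cite{KLO2}; combining Corollary 30 of \cite{LK17} with Proposition \ref{equivalence_dbar_besicovitch}, $Y_n$ is therefore $\dbar$-complete. Since $\{x^{(k)}\}\subseteq Z\subseteq Y_n$, I fix $y^{(n)}\in Y_n$ with $\lim_k \dbar(x^{(k)},y^{(n)})=0$.

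I then inductively construct $z^{(n)}\in Z$ and integers $M_n\nearrow\infty$ so that $z^{(n+1)}|_{[0,M_{n+1})}=z^{(n)}|_{[0,M_{n+1})}$ and $\limsup_k \dbar(x^{(k)},z^{(n)})<5^{-n+1}$. Given $z^{(n)}$, the word $u=z^{(n)}|_{[0,M_{n+1})}$ lies in $\lang(Y_{n+1})$ (as $z^{(n)}\in Z\subseteq Y_{n+1}$), so specification of $Y_{n+1}$ produces $\tilde{y}^{(n+1)}\in Y_{n+1}$ that starts with $u$ and differs from $y^{(n+1)}$ only on a finite initial segment; hence $\dbar(x^{(k)},\tilde{y}^{(n+1)})\to 0$. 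Setting $z^{(n+1)}$ equal to $\tilde{y}^{(n+1)}$ with every coordinate in $E_{n+1}$ replaced by $0$ puts $z^{(n+1)}$ in $Z$ by the crucial observation; since $E_{n+1}\subseteq E_n$ and $z^{(n)}$ already vanishes on $E_n$, the zeroing leaves $u$ untouched, giving the required initial-segment agreement. The density bound \eqref{ineq_density_En} then yields $\limsup_k \dbar(x^{(k)},z^{(n+1)})<5^{-n}$.

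The $z^{(n)}$'s converge in the product topology to some $z\in Z$ (closedness of $Z$), and it remains to show $\dbar(z,z^{(n)})\to 0$. With $A_m:=\{j:z^{(m)}(j)\neq z^{(m-1)}(j)\}$, the construction forces $A_m\subseteq[M_m,\infty)$ and $\dbar(A_m)=O(5^{-m})$, while $\{j:z(j)\neq z^{(n)}(j)\}\subseteq\bigcup_{m>n}A_m$. The main obstacle is the density estimate $\dbar(\bigcup_{m>n}A_m)=O(5^{-n})$: since $\dbar$ is only finitely subadditive, summability of $\dbar(A_m)$ alone is not enough. The fix is to choose $M_{n+1}$, after $z^{(n)}$ has been constructed, large enough that $|A_n\cap[0,N)|/N<2\dbar(A_n)$ for every $N\geq M_{n+1}$; combined with the separation $A_m\subseteq[M_m,\infty)$ and the fact that only finitely many $A_m$ meet any initial segment $[0,N)$, this promotes the geometric tail $\sum_{m>n}\dbar(A_m)=O(5^{-n})$ into the desired bound, yielding $\dbar(z,z^{(n)})\to 0$ and therefore $\dbar(x^{(k)},z)\to 0$.
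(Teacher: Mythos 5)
Your overall architecture is essentially the paper's: build points $z^{(n)}\in Z$ that agree on growing prefixes by gluing the old prefix to a good tail through a synchronizing block of $0$'s and then zeroing out $E_{n+1}$, take the (product-topology) limit, and do density bookkeeping. Two of your shortcuts are fine but worth flagging: obtaining $y^{(n)}$ from $\dbar$-completeness of the mixing sofic shifts $Y_n$ (via specification $\Rightarrow$ asymptotic average shadowing $\Rightarrow$ Besicovitch completeness) is legitimate, though the paper does something more elementary (it takes $y^{(n)}$ to be $x^{(n)}$ with the $E_n$-coordinates zeroed, using hereditarity); and ``specification of $Y_{n+1}$ produces $\tilde y^{(n+1)}$ starting with $u$'' needs synchronization rather than bare specification, since specification only glues finite words --- this is exactly Lemma \ref{hereditary_shift_spec_sync_word}, which applies because $Y_{n+1}$ is hereditary, surjective and synchronized.

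The genuine gap is in your last paragraph. You correctly identify finite subadditivity of $\dbar$ as the obstacle, but your fix does not close it. You control $|A_m\cap[0,N)|/N$ only for $N\ge M_{m+1}$. Fix $N$ and let $m_0$ be the largest index with $M_{m_0}<N$: every term with $m<m_0$ in $\sum_{m>n}|A_m\cap[0,N)|/N$ is then controlled, but the single term $m=m_0$ is not whenever $M_{m_0}\le N<M_{m_0+1}$, and there you only have the trivial bound $|A_{m_0}\cap[0,N)|\le N-M_{m_0}$, which can be a fixed positive fraction of $N$ (take $N=2M_{m_0}$). Nothing in your construction rules this out: the estimate $\dbar\bigl(y^{(m_0)},z^{(m_0-1)}\bigr)\lesssim 5^{-m_0}$ is a $\limsup$ statement and says nothing about the window $[M_{m_0},2M_{m_0})$ unless $M_{m_0}$ was chosen large relative to that particular difference set. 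Since this can recur for infinitely many $m_0$, the $\limsup$ defining $\dbar(z,z^{(n)})$ need not be small. The repair is to impose the uniform prefix-density conditions not on $A_{m_0}$ (which does not yet exist when $M_{m_0}$ is chosen) but on its generators, which do exist at that moment: when choosing $M_{m_0}$, require that $\bigl|\{j<N: y^{(m_0)}_j\ne z^{(m_0-1)}_j\}\bigr|$ and $|E_{m_0}\cap[0,N)|$ are both at most $N\cdot 5^{-m_0+3}$ for \emph{all} $N\ge M_{m_0}$; together with the bounded contribution of the connecting block this controls $|A_{m_0}\cap[0,N)|/N$ on the whole range $N\ge M_{m_0}$. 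This is precisely the role of conditions \ref{item:cond-4}--\ref{item:cond-6} in the paper's choice of $\ell_n$.
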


    \begin{proof}
        We let $\tilde{E}_j$ and $E_i$ to be defined as in \eqref{eq_sets_Ej}. Let $\{x^{(n)}\}_{n=1}^\infty$ be a $\dbar$-Cauchy sequence in $Z$. Passing to a subsequence, we can assume that for every $n \in \N$ the following holds
        \begin{equation}\label{ineq_seq_Cauchy_proximal}
            \dbar(x^{(n)},x^{(n+1)}) < 5^{-n-1}.
        \end{equation} 

        By the triangle inequality and \eqref{ineq_seq_Cauchy_proximal}, for every $m \in \N$ with $m \geq n$, we have
        \begin{equation}\label{ineq_Cauchy_xn_xm}
            \dbar(x^{(n)},x^{(m)}) < 5^{-n}.
        \end{equation}
      
        We will construct a strictly increasing sequence of integers $\{\ell_n\}_{n=2}^\infty \subset \N$ and a sequence of points $\{z^{(n)}\}_{n=1}^\infty \subset Z$ so that for every $n \in \N$ the word $z^{(n)}_{[0,\ell_{n+1})}$ is a prefix of $z^{(n+1)}$, that is,
        \begin{equation}\label{aux_seq}
            z^{(n)}_{[0,\ell_{n+1})} = z^{(n+1)}_{[0,\ell_{n+1})}.
        \end{equation}
        
        Using \eqref{aux_seq} we define $x \in Z$ to be the unique point satisfying $x_{[0,\ell_n)} = z^{(n)}_{[0,\ell_n)}$ for every $n\geq 1$. We are going to show that $x$ is a $\dbar$-limit of $\{x^{(n)}\}_{n=1}^\infty$.
        Equivalently, $x$ is given by $$x=z^{(1)}_{[0,\ell_2)}z^{(2)}_{[\ell_2,\ell_3)}\cdots.
        $$

        Note that $Y_n$ is a hereditary shift with specification, in particular, synchronized, for every $n\in \N$. Therefore, by Lemmas \ref{hereditary_shift_sync_word} and \ref{hereditary_shift_spec_sync_word}, for every $n \in \N$ there exists $m_n \in \N$ such that $u_n = 0^{m_n}$ is a synchronizing word for $Y_n$.
        
        We construct an auxiliary sequence of points $\{y^{(n)}\}_{n=1}^\infty$ as
        \begin{equation*}\label{eqn:natural-seq-in-Z}
            y^{(n)}_i=\begin{cases}
                0, \quad &\text{ if } i \in E_{n}, \\
                x^{(n)}_i, &\text{ otherwise.}
            \end{cases}
        \end{equation*}

        Since $Z$ is hereditary, it follows that $y^{(n)} \in Z$ for every $n \in \N$. Observe that, using \eqref{ineq_density_En}, for every $n \in \N$ we have
        \begin{equation}\label{ineq_yn_x_n}
            \dbar(y^{(n)}, x^{(n)}) \leq \dbar(E_n) < 5^{-n+1}.
        \end{equation}
        
        Let $\ell_2 \in \N$ be such that 
        \begin{enumerate}
            \item $\frac{|u_2|}{\ell_2} < 5^{-2}$;

            \item $\left| \left\{ i \in [0,\ell) : y^{(2)}_i \neq x^{(2)}_i \right\}\right| < \ell \cdot 5^{-1}$  for every $\ell \geq \ell_2$ (possible by \eqref{ineq_yn_x_n}).
        \end{enumerate}

        Set $z^{(1)} = y^{(1)}$ and 
        $$z^{(2)} =  y^{(1)}_{[0,\ell_2)}u_{2}y^{(2)}_{[\ell_2 + |u_{2}|, \infty)} \in Y_{2} = Z_1 \cap Z_2.$$
        
        We have $\dbar(z^{(2)}, y^{(2)}) = 0$. To see that  $z^{(2)} \in Z$, it is enough to note that $z^{(2)}_i=0$ for every $i \in E_2$ and $z^{(2)} 
        \in Y_2$. 
        
        We proceed with the construction of $z^{(n)} \in Z$ inductively as follows. For $n > 2$ we will take $\ell_n \in \N$ satisfying the following conditions:
        \begin{enumerate}[label=(\arabic*)]
            \item\label{item:cond-1} $\ell_n \geq 2 \ell_{n-1}$;
            
            \item\label{item:cond-2} $\frac{|u_n|}{\ell_n} < 5^{-n}$;

            \item\label{item:cond-4} $\left| \left\{ i \in [0,\ell) : y^{(n)}_i \neq x^{(n)}_i \right\}\right| < \ell \cdot 5^{-n+1}$  for every $\ell \geq \ell_n$ (possible by \eqref{ineq_yn_x_n});

            \item\label{item:cond-5} $\left| \left\{ i \in [0,\ell) : x^{(n)}_i \neq x^{(m)}_i \right\}\right| < \ell \cdot 5^{-n}$  for every $\ell \geq \ell_n$ (possible by \eqref{ineq_Cauchy_xn_xm});

            \item\label{item:cond-6} $|E_n \cap [0,\ell)| < \ell \cdot 5^{-n+1}$ for every $\ell \geq \ell_n$ (possible by \eqref{ineq_density_En}).
        \end{enumerate}

        We define $z^{(n)}$ by
        \begin{equation*}
           z^{(n)} = z^{(n-1)}_{[0, \ell_n)} u_n y^{(n)}_{[\ell_n + |u_n|, \infty)} \in Y_n = \bigcap_{j=1}^n Z_j.
        \end{equation*}

       For every $n \in \N$ we have $\dbar(z^{(n)},y^{(n)}) = 0$. To see that $z^{(n)} \in Z$, it is enough to note that $z^{(n)}\in Y_n$ and $z^{(n)}_i=0$ for every $i \in E_n$. Also, for every $n \in \N$ we have
        \begin{equation}\label{zn_yn}
            \left| \left\{ i \in [\ell_n,\ell_{n+1}) : z^{(n)}_i \neq y^{(n)}_i \right\}\right| \leq |u_n|.
        \end{equation}
        
        By the triangle inequality, it follows from condition \ref{item:cond-4} that our choice of $\ell_n$, together with \eqref{ineq_yn_x_n} and \eqref{zn_yn}, guarantees
        \begin{align}\label{dist_x_xn_ln}
        \begin{split}
            \left| \left\{ i \in [\ell_n,\ell_{n+1}) : x_i \neq x^{(n)}_i \right\}\right| &= \left| \left\{ i \in [\ell_n,\ell_{n+1}) : z^{(n)}_i \neq x^{(n)}_i \right\}\right| \\
            &\leq \left| \left\{ i \in [\ell_n,\ell_{n+1}) : y^{(n)}_i \neq x^{(n)}_i \right\}\right| + |u_n| \\
            &\leq \ell_{n+1} \cdot 5^{-n+1}+ |u_n|.
        \end{split}
        \end{align}

        Conditions \ref{item:cond-1} and \ref{item:cond-2} mean that our choice of $\ell_n$ guarantees that for every $n, k \in \N$ such that $k>n$ we have\begin{equation}\label{upperbound_sync_words}
            \sum_{j=n}^{k-1} |u_j| \leq \sum_{j=n}^{k-1} \ell_j \cdot 5^{-j} < 2 \ell_{k-1} \cdot 5^{-n} \leq \ell_{k} \cdot 5^{-n}.
        \end{equation}
        
        It remains to show that $\lim_{n \to \infty} \dbar(x^{(n)},x) = 0$.
        Fix $n \in \N$. For every $\ell \geq \ell_{n+1}$ there exists $k \geq n +1 $ such that $\ell \in [\ell_k,\ell_{k+1})$. Note that
        $$\left|\left\{i \in [0,\ell) : x_i \neq x^{(n)}_i \right\}\right| \leq  \left|\left\{i \in [0,\ell_{n+1}) : x_i \neq x^{(n)}_i\right\}\right| + \left|\left\{i \in [\ell_{n+1},\ell) : x_i\neq x^{(n)}_i\right\}\right|.$$
        By the triangle inequality, we have 
        \begin{align*}
            \left|\left\{i \in [\ell_{n+1},\ell) : x_i\neq x^{(n)}_i\right\}\right| \leq &
             \sum_{j=n+1}^{k-1} \left|\left\{i \in [\ell_j,\ell_{j+1}) : x^{(n)}_i\neq x^{(j)}_i\right\}\right| 
             \\& +\sum_{j=n+1}^{k-1} \left|\left\{i \in [\ell_j,\ell_{j+1}) : x_i\neq x^{(j)}_i\right\}\right|  \\ &+
              \left|\left\{i \in [\ell_k,\ell) : x^{(n)}_i \neq x^{(k)}_i\right\}\right|  +
             \left|\left\{i \in [\ell_k,\ell) : x_i \neq x^{(k)}_i\right\}\right|     .
        \end{align*}
        
        By condition \ref{item:cond-5} for the choice of $\ell_n$'s and inequality \eqref{dist_x_xn_ln}, we obtain
        \begin{align}\label{final_sum}
        \begin{split}
            \left|\left\{i \in [0,\ell) : x_i \neq x^{(n)}_i \right\}\right| \leq &\ell_{n+1} 
            +
             \sum_{j=n+1}^{k-1} \ell_{j+1} \cdot 5^{-n}  
            + \sum_{j=n+1}^{k-1} \left(\ell_{j+1} \cdot 5^{-j+1} + |u_j| \right) \\
            & + \ell \cdot 5^{-n}
            + |u_k| + |E_k \cap [\ell_k, \ell)|.
        \end{split}
        \end{align}

        Note that condition \ref{item:cond-1} implies 
        $$\sum_{j=n+1}^{k-1} \ell_{j+1} \cdot 5^{-j+1} \leq \sum_{j=n+1}^{k-1}  \ell_{j+1} \cdot  5^{-n} \leq  2\ell_k \cdot 5^{-n}.$$
        As a consequence, from \eqref{upperbound_sync_words} we obtain 
        \begin{align}\label{eq_sum_x_xj}
         \begin{split}   
            \sum_{j=n+1}^{k-1} \left(\ell_{j+1} \cdot 5^{-j+1} + |u_j| \right) &\leq \sum_{j=n+1}^{k-1} |u_j| + \sum_{j=n+1}^{k-1} \ell_{j+1} \cdot 5^{-j+1} \\ 
            &\leq \ell_{k} \cdot 5^{-n-1} + 2 \ell_{k} \cdot 5^{-n} \\
            &\leq 3\ell_k \cdot 5^{-n}.
         \end{split}
        \end{align}
        
        Therefore, using conditions \ref{item:cond-1} and \ref{item:cond-5} together with \eqref{eq_sum_x_xj}, it follows from \eqref{final_sum} that
        \begin{align}\label{eq_final_sum}
        \begin{split}
           \left|\left\{i \in [0,\ell) : x_i \neq x^{(n)}_i \right\}\right| &\leq \ell_{n+1} + 2\ell_k \cdot 5^{-n} + 3\ell_k \cdot 5^{-n} + |u_k| + |E_k \cap [\ell_k, \ell)|  \\
           & \leq \ell_{n+1} + \ell_k 5^{-n+1}+ |u_k| + |E_k \cap [\ell_k, \ell)|.
        \end{split}
        \end{align}

        Lastly, by the conditions \ref{item:cond-2} and \ref{item:cond-6} for the choice of $\ell_k$, it follows from \eqref{eq_final_sum} that
        \begin{align}
        \begin{split}
            \frac{\left|\left\{i \in [0,\ell) : x_i \neq x^{(n)}_i \right\}\right|}{\ell} &\leq \frac{\ell_{n+1}}{\ell} 
            + \frac{\ell_k \cdot 5^{-n+1}}{\ell}
            + \frac{|u_k|}{\ell} + \frac{|E_k \cap [\ell_k, \ell)|}{\ell} \\ 
            & \leq \frac{\ell_{n+1}}{\ell} + 5^{-n+1} + 5^{-k} + 5^{-k+1}.
        \end{split}
        \end{align}

        Hence,
        $\dbar(x, x^{(n)}) \leq 5^{-n+1}$ for all $n \in \N$, and
        $\lim_{n \to \infty} \dbar(x, x^{(n)}) = 0$.
    \end{proof}

    \begin{corollary}
        The proximal shift space $Z$ has the vague specification property.
    \end{corollary}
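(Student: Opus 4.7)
The plan is to assemble three ingredients that are already available at this point in the paper. First, Theorem \ref{thm_proximal_complete} (which has just been proved) tells us that the shift space $Z$ is $\dbar$-complete. Second, the authors of \cite{CKKK} established that $Z$ has the $\dbar$-shadowing property; I will simply cite that result. Third, $Z$ is surjective: this follows directly from the fact that $Z=\bigcap_{n=1}^{\infty} Y_n$ is an intersection of mixing sofic shifts given by strongly connected labeled graphs, each of which is surjective, and an intersection of $\si$-invariant surjective shift spaces is surjective (equivalently, one can observe that every word in $\lang(Z)$ admits both a left and right extension in $\lang(Z)$ because the graphs $G_n$ contain cycles of all required lengths once enough zeros are inserted at the positions in $E_n$).

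With surjectivity, $\dbar$-completeness, and the $\dbar$-shadowing property in hand, Corollary \ref{cor:AASP_equiv_dbar-shadow} immediately yields that $Z$ has the asymptotic average shadowing property. Then Theorem \ref{thm:AASP-VSP} (the main equivalence between the asymptotic average shadowing property and the vague specification property, valid for any topological dynamical system) converts this into the vague specification property for $Z$.

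In effect, the proof is just a one-line chain of implications:
\[
\text{$\dbar$-complete} + \text{$\dbar$-shadowing} \;\Longrightarrow\; \text{AASP} \;\Longrightarrow\; \text{VSP},
\]
where the first implication uses surjectivity via Corollary \ref{cor:AASP_equiv_dbar-shadow}, and the second uses Theorem \ref{thm:AASP-VSP}. The only substantive work has already been done in Theorem \ref{thm_proximal_complete}; the remaining obstacle, if any, is merely to record that $Z$ is surjective so that Corollary \ref{cor:AASP_equiv_dbar-shadow} can be applied, which is transparent from the graph description of each $Y_n$ in Example \ref{ex:proximal}.
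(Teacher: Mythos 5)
Your proof is correct and follows exactly the route the paper intends: combine the $\dbar$-completeness of $Z$ (Theorem \ref{thm_proximal_complete}) with the $\dbar$-shadowing property from \cite{CKKK} and surjectivity, apply Corollary \ref{cor:AASP_equiv_dbar-shadow} to get the asymptotic average shadowing property, and then Theorem \ref{thm:AASP-VSP} to get the vague specification property. Your extra remark on surjectivity (which the paper leaves implicit) is welcome, though note that the correct justification is that $Z=\bigcap_n Y_n$ is a \emph{decreasing} intersection of surjective compact systems (or that every word in $\lang(Z)$ is left-extendable), since an arbitrary intersection of surjective subshifts need not be surjective.
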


    \subsection{A class of minimal shift spaces with the vague specification property}\label{minimal_example}
    In \cite{CKKK}, the authors provided a way of constructing minimal shift spaces that are mixing and have the $\dbar$-shadowing property, acknowledging Piotr Oprocha for the ideas that inspired this construction.
    It is worth mentioning that the obtained shift spaces have positive topological entropy (for definition and details, we refer to \cite{CKKK}).
    We will show that those shift spaces are $\dbar$-complete. 
    
    The parameters of the construction are an initial finite non-empty set of words $\SB_1 \subset \alf^*$ and a sequence $\{t(n)\}_{n=1}^\infty \subset \N$ satisfying $t(n) \geq 2$ for all $n \in \N$.
    
    Assume that we have defined the family of words $\SB_n$ for some $n \in \N$. Write $k(n)$ for the cardinality of $\SB_n$. Enumerate the elements of $\SB_n$ as $\be^{(n)}_1,\ldots,\be^{(n)}_{k(n)}$, and let $\tau(n)=\be^{(n)}_1 \ldots \be^{(n)}_{k(n)}$ denote their concatenation. Let $s(n)$ (resp. $l(n)$) be the length of the shortest (resp. longest) word in $\SB_n$.
    Words belonging to $\SB_{n+1}$ are constructed as follows: first we concatenate $t(n)$ words arbitrarily chosen from $\SB_n$, then we add the suffix $\tau(n)$, that is,
    $$
    \SB_{n+1}=\left\{b_1 b_2 \dots b_{t(n)} \tau(n) : b_i \in\SB_n \text{ for all } i \in [1,t(n)]\right\}.
    $$

   By construction, we have
    \begin{align}\label{smallest_biggest_lenght}
    \begin{split}
        s(n+1) &= t(n)s(n) + |\tau(n)|; \\
        l(n+1) &= t(n)l(n) + |\tau(n)| .
    \end{split}
    \end{align}
    
    For $n \in \N$, let $X_n$ be the coded shift space generated by the code $\SB_n$. That is, $X_n$ consists of all infinite concatenations of words from $\SB_n$ together with their shifts. The proofs of the facts below can be found in Section 4.2 of \cite{CKKK}.

    \begin{fact}\label{fact1}
        Every word in $\SB_n$ is a subword of every word in $\SB_m$, for every $n < m$.
    \end{fact}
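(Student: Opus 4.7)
My plan is to prove the statement by induction on $m - n \geq 1$, exploiting the fact that by construction every word in $\SB_{n+1}$ contains the suffix $\tau(n)$, which is itself the concatenation of \emph{all} words in $\SB_n$.

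For the base case $m = n+1$, let $w \in \SB_{n+1}$ be arbitrary. By definition of $\SB_{n+1}$, we have $w = b_1 b_2 \ldots b_{t(n)} \tau(n)$ for some choice of $b_i \in \SB_n$, and $\tau(n) = \be^{(n)}_1 \ldots \be^{(n)}_{k(n)}$ is the concatenation of all elements of $\SB_n$. Hence every word in $\SB_n$ appears as a subword of the suffix $\tau(n)$ of $w$, and therefore as a subword of $w$.

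For the inductive step, assume the claim holds for some $m - n = k \geq 1$, that is, every word in $\SB_n$ is a subword of every word in $\SB_{n+k}$. Fix $u \in \SB_n$ and $w \in \SB_{n+k+1}$. By construction, $w = b_1 b_2 \ldots b_{t(n+k)} \tau(n+k)$ with each $b_i \in \SB_{n+k}$. In particular, $b_1 \in \SB_{n+k}$, so the inductive hypothesis implies that $u$ is a subword of $b_1$, and consequently $u$ is a subword of $w$, completing the induction.

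I do not anticipate any real obstacle here: the fact is essentially immediate from the recursive definition of the families $\SB_n$, since the suffix $\tau(n)$ appended at each stage is engineered precisely to contain every word of the previous stage. The only thing to keep track of is that one must use the first block $b_1$ (or any $b_i$) of a word $w \in \SB_{n+k+1}$ to invoke the inductive hypothesis, which is available because each $b_i$ is an element of $\SB_{n+k}$.
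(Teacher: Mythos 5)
Your proof is correct. The paper itself does not prove this fact but defers to Section 4.2 of \cite{CKKK}; your induction on $m-n$ is exactly the intended argument there: the suffix $\tau(n)$, being the concatenation of all elements of $\SB_n$, handles the base case $m=n+1$, and the inductive step passes through any single block $b_i \in \SB_{m-1}$ of a word in $\SB_m$. No gaps.
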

    \begin{fact}
        The shift space $X_n$ is transitive and sofic for every $n \in \N$;
    \end{fact}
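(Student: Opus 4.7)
The plan is to verify the two claims separately, both of which are standard consequences of $X_n$ being a coded shift generated by a finite code. First I would address the sofic property by constructing an explicit finite labeled graph presenting $X_n$. Specifically, I would build a \emph{flower automaton}: pick a base vertex $v_0$, and for each word $\be = \be_1 \ldots \be_k \in \SB_n$ introduce $k-1$ auxiliary vertices and a directed cycle through them starting and ending at $v_0$, whose $k$ edges are labeled consecutively by $\be_1, \ldots, \be_k$. By construction, the sequences of labels along bi-infinite paths through this graph are exactly the infinite concatenations of words from $\SB_n$ together with their shifts, hence form $X_n$. Since the total vertex set is finite, $X_n$ is sofic.

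For transitivity, I would use the decomposition property that every $u \in \lang(X_n)$ appears as a factor of some finite concatenation $\be^{(1)} \ldots \be^{(k)}$ with $\be^{(i)} \in \SB_n$, which is immediate from the definition of $X_n$. Given $u, v \in \lang(X_n)$, take such enclosing concatenations $\be^{(1)} \ldots \be^{(k)}$ and $\gamma^{(1)} \ldots \gamma^{(\ell)}$. The concatenation $\be^{(1)} \ldots \be^{(k)} \gamma^{(1)} \ldots \gamma^{(\ell)}$ is itself in $\lang(X_n)$, and contains $u$ on the left and $v$ on the right. Reading off the letters strictly between the occurrence of $u$ and that of $v$ yields a connecting word $w$ with $uwv \in \lang(X_n)$, giving transitivity.

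I expect neither step to present substantial difficulty; both rely only on unwinding the definition of the coded shift $X_n$. The most delicate technical point is handling the case when $u$ or $v$ straddles boundaries between consecutive words of $\SB_n$ inside the enclosing concatenation, but this is readily dealt with by choosing the enclosing concatenations long enough to contain $u$ (respectively $v$) strictly in their interior. Alternatively, one can simply invoke the standard result that every coded shift generated by a finite code is sofic, reducing the task to the short verification of transitivity above.
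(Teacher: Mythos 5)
Your argument is correct. The paper does not actually prove this fact itself --- it defers to Section~4.2 of the cited work of Konieczny, Kupsa and Kwietniak --- so there is no in-paper proof to compare against; your flower-automaton construction for soficity and the two-block concatenation argument for transitivity are the standard proofs, and both go through because each $\SB_n$ is a finite set of nonempty words. One cosmetic point: the paper works with one-sided shifts indexed by $\N_0$, so you should read off labels of one-sided infinite paths rather than bi-infinite ones; every infinite path in the flower automaton returns to the base vertex within $l(n)$ steps, so its label sequence is a shift of an infinite concatenation of code words, which is exactly what identifies $X_{G}$ with $X_n$.
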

    \begin{fact}
        For every $n\in \N$ one has $X_{n+1} \subseteq X_n$, which implies $\bigcap_{n=1}^\infty X_n \neq \emptyset$.
    \end{fact}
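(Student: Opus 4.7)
The plan is to handle the two claims separately: first, the inclusion $X_{n+1}\subseteq X_n$, and second, the nonemptiness of $\bigcap_{n=1}^\infty X_n$.

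For the inclusion, the key observation is that every word in $\SB_{n+1}$ is itself a concatenation of words from $\SB_n$. Indeed, by construction any $\beta\in\SB_{n+1}$ has the form $\beta = b_1 b_2 \cdots b_{t(n)} \tau(n)$ with each $b_i\in\SB_n$, and the appended suffix $\tau(n) = \be^{(n)}_1 \cdots \be^{(n)}_{k(n)}$ is, by its very definition, also a concatenation of elements of $\SB_n$. Hence $\beta$ decomposes into $t(n)+k(n)$ consecutive words from $\SB_n$. Now any $x\in X_{n+1}$ is of the form $\sigma^{j}(y)$ for some $j\in\N_0$ and some infinite concatenation $y = c_1 c_2 c_3\cdots$ with each $c_i\in\SB_{n+1}$. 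Replacing each block $c_i$ by its decomposition into $\SB_n$-words realises $y$ as an infinite concatenation of words from $\SB_n$, so $y\in X_n$ and therefore $x=\sigma^{j}(y)\in X_n$ as well.

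For the second claim, I would argue by compactness. Each $\SB_n$ is a nonempty finite family of words (nonemptiness propagates from $\SB_1$ through the inductive definition), so $X_n\neq\emptyset$; moreover, $X_n$ is a shift space and in particular a closed subset of the compact space $\alf^\infty$. Combined with the monotonicity $X_{n+1}\subseteq X_n$ established in the first paragraph, we obtain a decreasing sequence of nonempty compact sets, whose intersection is nonempty by the finite intersection property.

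The only mild obstacle is the shift bookkeeping in the first part, where the offset $j$ need not align with the $\SB_n$-block boundaries coming from the re-decomposition of the $c_i$. This is harmless, however, because the definition of the coded shift $X_n$ explicitly allows arbitrary shifts of infinite $\SB_n$-concatenations; once the re-decomposition has been performed, any subsequent shift lands in $X_n$ by definition.
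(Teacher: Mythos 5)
Your proof is correct: the decomposition of each $\SB_{n+1}$-word into $t(n)+k(n)$ consecutive $\SB_n$-words gives the inclusion, and Cantor's intersection theorem for the resulting decreasing chain of nonempty closed subsets of the compact space $\alf^\infty$ gives nonemptiness; you also rightly note that the shift offset is absorbed by the definition of a coded shift. The paper itself defers this fact to Section 4.2 of the cited reference rather than proving it, and your argument is the standard one expected there.
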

    \begin{fact}
         The shift space $X = \bigcap_{n=1}^\infty X_n $ is minimal.
    \end{fact}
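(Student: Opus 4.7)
The plan is to prove minimality via the standard uniform-recurrence criterion: a shift space is minimal if and only if every word in its language appears with bounded gaps in every one of its points. Fix therefore an arbitrary $w \in \lang(X)$; the goal is to produce a bound $L = L(w)$ such that consecutive occurrences of $w$ in any $y \in X$ are separated by at most $L$.

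The first step is to locate $w$ inside a single element of some $\SB_{n+1}$. The recursion \eqref{smallest_biggest_lenght} yields $s(n+1) \geq t(n) s(n) \geq 2 s(n)$, so $s(n)\to\infty$ and I may pick $n$ so large that $s(n) > |w|$. Since $w \in \lang(X)\subseteq\lang(X_n)$, there is some point of $X_n$ in which $w$ appears, and such a point is (a shift of) an infinite concatenation of elements of $\SB_n$; as $|w| < s(n)$, the window realizing $w$ meets at most two consecutive blocks, so $w$ is a subword of some $b_1 b_2$ with $b_1,b_2 \in \SB_n$. Using $t(n)\geq 2$ and the inductive definition of $\SB_{n+1}$, the word $b_1 b_2 \tilde b_3\cdots \tilde b_{t(n)} \tau(n)$ belongs to $\SB_{n+1}$ for any choice of $\tilde b_3,\ldots,\tilde b_{t(n)}\in\SB_n$, and thus some element of $\SB_{n+1}$ contains $w$ as a subword.

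Next, Fact \ref{fact1} promotes this: every word in $\SB_m$ with $m\geq n+2$ contains the chosen element of $\SB_{n+1}$ as a subword, and hence contains $w$ as a subword. Fixing any such $m$ and any $y \in X$, the inclusion $X\subseteq X_m$ means that $y$ is a shift of an infinite concatenation of words in $\SB_m$, each of length at most $l(m)$ and each containing $w$; therefore consecutive occurrences of $w$ in $y$ are separated by at most $2l(m)$, a bound depending only on $w$. This is the desired uniform-gap property, and minimality follows. The main subtlety to watch is the first step: one needs both the growth $s(n)\to\infty$ so that a window of length $|w|$ is covered by just two consecutive $\SB_n$-blocks, and the observation that the pair $(b_1,b_2)$ can be legitimately realized as the first two components of an element of $\SB_{n+1}$, which is immediate from the freedom in choosing the $b_i$'s in the inductive construction.
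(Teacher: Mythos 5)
Your argument is correct: locating $w$ inside a single block of some $\SB_{n+1}$ (using $s(n)\to\infty$ and the freedom of the first $t(n)$ factors), then propagating it to every block of every $\SB_m$ via Fact \ref{fact1}, and concluding syndetic occurrence of every word of $\lang(X)$ in every point of $X$ is exactly the standard route to minimality for this construction. The paper itself does not prove this fact but defers to Section 4.2 of \cite{CKKK}, where essentially the same argument (every word of $\SB_n$ is a subword of every word of $\SB_m$ for $m>n$, hence every word of the language occurs with bounded gaps) is used, so your proof matches the intended one.
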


    From now on, we set $\SB_1=\{0,11\}$. As noted in \cite{CKKK}, from this choice of $\SB_1$, a simple inductive argument shows that, for every $n \in \N$, the set of lengths of all words in $\SB_n$ is an interval.
    \begin{prop}\label{interval_length_words}
        For every $m \in [s(n),l(n)]$, there is $u \in \SB_n$ such that $|u|=m.$
    \end{prop}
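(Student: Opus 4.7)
The plan is to prove the proposition by induction on $n$, exploiting the recursive definition of $\SB_{n+1}$ in terms of $\SB_n$. The base case $n=1$ is immediate: with $\SB_1=\{0,11\}$ the lengths realized are $\{1,2\}=[s(1),l(1)]\cap\N$.

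For the inductive step, I would note that every word in $\SB_{n+1}$ has the shape $b_1 b_2 \cdots b_{t(n)}\tau(n)$ with the $b_i$ ranging independently over $\SB_n$, so its length is $|\tau(n)|+\sum_{i=1}^{t(n)}|b_i|$. Writing $L_n\subseteq\N$ for the set of lengths of words in $\SB_n$, the set of lengths of words in $\SB_{n+1}$ is therefore
\[
L_{n+1}=|\tau(n)|+\underbrace{L_n+L_n+\cdots+L_n}_{t(n)\text{ times}}.
\]
By the inductive hypothesis, $L_n=[s(n),l(n)]\cap\N$. In view of \eqref{smallest_biggest_lenght}, it is enough to show that the $t(n)$-fold sumset of an integer interval is again an integer interval, namely $t(n)\cdot L_n=[t(n)s(n),t(n)l(n)]\cap\N$.

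This reduces to the elementary observation that for any integer intervals $[a,b]$ and $[c,d]$ one has $[a,b]+[c,d]=[a+c,b+d]$: given $m\in[a+c,b+d]$, the choice $x=\max(a,m-d)$ lies in $[a,b]$ and $m-x$ lies in $[c,d]$. Iterating this identity $t(n)-1$ times yields the desired equality for the $t(n)$-fold sumset, which completes the induction. I do not anticipate any real obstacle; the whole argument boils down to the sumset-of-intervals computation and the explicit formulas \eqref{smallest_biggest_lenght} for $s(n+1)$ and $l(n+1)$.
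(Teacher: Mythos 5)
Your argument is correct, and it is precisely the ``simple inductive argument'' that the paper invokes without writing out (the proposition is stated there as a known fact from the cited construction, with no proof given). The base case, the sumset identity $[a,b]+[c,d]=[a+c,b+d]$ for nonempty integer intervals, and the appeal to \eqref{smallest_biggest_lenght} together give a complete and correct proof.
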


    Furthermore, we observe that the ratio between $s(n)$ and $l(n)$ is increasing. In particular, for all $n \in \N$ we have
    \begin{align}\label{sn-ln-ratio}
    \begin{split}
        \dfrac{s(n)}{l(n)}\geq \dfrac{1}{2}, \\
        \dfrac{s(n)}{l(n)}<\dfrac{2}{3}.
    \end{split}
    \end{align}

    The next lemma is a direct consequence of Proposition \ref{interval_length_words} and was extracted from the proof of Proposition 25 in \cite{CKKK}. For the sake of completeness we will replicate the argument.

    \begin{lemma}\label{connecting_words}
        Let $\al \in \N$ be such that $0<\al \leq l(n)$ and $b_1,b_2 \in \SB_n$. Then one of the following holds:
        \begin{enumerate}[label=(\arabic*)]
            \item There exist $c_1,c_2 \in \SB_n$ such that $|c_1c_2|=|b_1b_2| + \al$;
            \item There exist $c_1,c_2,c_3 \in \SB_n$ such that $|c_1c_2c_3|=|b_1b_2| + \al$.
        \end{enumerate}
    \end{lemma}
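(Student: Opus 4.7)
The plan is to reduce the lemma to Proposition \ref{interval_length_words}, which asserts that the set of lengths $\{|u| : u \in \SB_n\}$ equals the entire integer interval $[s(n), l(n)]$. Writing $N := |b_1 b_2| + \al$, I first want to locate $N$ inside $[2s(n), 2l(n)] \cup [3s(n), 3l(n)]$; once that is done, a decomposition $N = m_1 + m_2$ or $N = m_1 + m_2 + m_3$ with each summand lying in $[s(n), l(n)]$, followed by the proposition applied coordinate-wise, immediately produces the required words.

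To locate $N$, observe that $|b_i| \in [s(n), l(n)]$ forces $|b_1 b_2| \in [2s(n), 2l(n)]$, and combining with $1 \leq \al \leq l(n)$ yields $N \in [2s(n)+1,\, 3l(n)]$. The right bound in \eqref{sn-ln-ratio}, namely $s(n)/l(n) < 2/3$, rearranges to $3s(n) < 2l(n)$, so the two candidate intervals $[2s(n), 2l(n)]$ and $[3s(n), 3l(n)]$ overlap and their union equals $[2s(n), 3l(n)]$. Consequently $N$ lies in at least one of them, which dictates whether we aim for case $(1)$ or case $(2)$.

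Finally, I would handle the two cases in parallel. If $N \in [2s(n), 2l(n)]$, set $m_1 = \lfloor N/2 \rfloor$ and $m_2 = \lceil N/2 \rceil$; the inequalities $N \geq 2s(n)$ and $N \leq 2l(n)$ give $m_1, m_2 \in [s(n), l(n)]$, and Proposition \ref{interval_length_words} supplies $c_1, c_2 \in \SB_n$ with $|c_i| = m_i$, so $|c_1 c_2| = N = |b_1 b_2| + \al$. If $N \in [3s(n), 3l(n)]$ only, the same recipe with a balanced three-term partition $m_1 + m_2 + m_3 = N$ and $k=3$ instead of $k=2$ gives case $(2)$. I do not anticipate any real obstacle; the only point worth double-checking is that the balanced partition indeed lands in $[s(n), l(n)]$, which is a routine pigeonhole verification.
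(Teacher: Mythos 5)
Your proposal is correct and follows essentially the same route as the paper: bound $N=|b_1b_2|+\al$ inside $[2s(n),3l(n)]$, use $s(n)/l(n)<2/3$ (i.e.\ $3s(n)<2l(n)$) to see that $[2s(n),2l(n)]$ and $[3s(n),3l(n)]$ cover this range, and then invoke Proposition \ref{interval_length_words}. The only difference is that you make explicit the balanced-partition step that the paper leaves implicit when it passes from $N\in[ks(n),kl(n)]$ to $k$ words of the right total length.
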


    \begin{proof}
        We first observe that $2 s(n) < |b_1b_2| + \al \leq 3l(n)$. If $2s(n) < |b_1b_2| + \al \leq 2l(n)$, then, by Proposition \ref{interval_length_words}, we can find $c_1,c_2 \in \SB_n$ such that $|c_1c_2| = |b_1b_2| + \al$. 
        
        It follows from \eqref{sn-ln-ratio} that $3s(n) < 2 l(n)$. As a consequence, if $|b_1b_2| + \al > 2l(n)$, then $3 s(n) < |b_1b_2| + \al \leq 3l(n)$ and, by Proposition \ref{interval_length_words}, we can find $c_1,c_2,c_3 \in \SB_n$ such that $|b_1b_2| + \al = |c_1c_2c_3|$. 
    \end{proof}

    In \cite{CKKK}, it is proved that under certain conditions on the sequence $\{t(n)\}_{n=1}^\infty \subset \N$, the obtained shift space $X$ possesses the dynamical properties, as stated below.
    
    \begin{theorem}[Theorem 27 in \cite{CKKK}]\label{exist_minimal_example}
        There exist sequences of positive integers $\{t(n)\}_{n=1}^\infty$ such that $X$ is minimal and mixing, and has positive topological entropy and the $\dbar$-shadowing property.
    \end{theorem}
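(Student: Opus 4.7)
The plan is to prove all four properties simultaneously for a sufficiently fast-growing sequence $\{t(n)\}_{n=1}^\infty$, with the concrete benchmark $t(n) \geq 2^n k(n)$ in mind. Minimality of $X$ is one of the facts already recorded above and holds for every choice of $\{t(n)\}$, so the task reduces to choosing $\{t(n)\}$ to secure mixing, positive topological entropy, and the $\dbar$-shadowing property. The guiding heuristic is that $t(n)$ must dominate $k(n)$ strongly, so that in every word of $\SB_{n+1}$ the deterministic suffix $\tau(n)$ occupies only a small fraction of positions; concretely,
\[
\frac{|\tau(n)|}{s(n+1)} \leq \frac{k(n)\,l(n)}{t(n)\,s(n)} \leq \frac{2k(n)}{t(n)}
\]
by \eqref{smallest_biggest_lenght} and \eqref{sn-ln-ratio}.

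For mixing of $X$, I would combine Fact \ref{fact1} with Lemma \ref{connecting_words}: given $u,v \in \lang(X)$, choose $n$ so large that both $u$ and $v$ occur inside every word of $\SB_n$, and then appending full $\SB_n$-words between a word containing $u$ and one containing $v$, with Lemma \ref{connecting_words} providing the unit-length flexibility, realizes every sufficiently large gap length. For positive topological entropy, a standard counting argument (using $\SB_n \subseteq \lang(X)$, which follows from Fact \ref{fact1} together with the minimality of $X$) yields a lower bound of order $\liminf_n \log k(n)/l(n)$ for the topological entropy $h(X)$. Combining the recursion $k(n+1) = k(n)^{t(n)}$ with $l(n+1) \leq (t(n)+k(n))\,l(n)$ gives
\[
\frac{\log k(n+1)}{l(n+1)} \geq \frac{t(n)}{t(n)+k(n)} \cdot \frac{\log k(n)}{l(n)};
\]
with $t(n) \geq 2^n k(n)$ the multiplicative factor exceeds $1 - 2^{-n}$, so the telescoping product $\prod_{n \geq 1}(1 - 2^{-n})$ is strictly positive and $\log k(n)/l(n)$ stays bounded below by a positive constant, starting from $\log 2 / 2$ at $n=1$ with $\SB_1 = \{0,11\}$.

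The main obstacle is the $\dbar$-shadowing property, and here my plan proceeds in two layers. First, each $X_n$ is a mixing sofic shift and therefore satisfies the specification property, hence the $\dbar$-shadowing property with some modulus $N_n$. Second, I claim that every point of $X_n$ can be approximated in $\dbar$ by a point of $X$, with error tending to zero as $n$ grows. For this I would iteratively correct a point of $X_n$ into a point of $X_{n+1}$ by re-chunking its $\SB_n$-decomposition into blocks of $t(n)$ consecutive words and forcibly inserting the suffix $\tau(n)$; by the first displayed estimate, the $\dbar$-cost of each step is at most $2k(n)/t(n) \leq 2^{-n+1}$, so iterating produces a point of $X$ at total $\dbar$-cost at most $\sum_{m \geq n} 2^{-m+1} \leq 2^{-n+2}$. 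Given $\vep > 0$, I would pick $n$ with $2^{-n+2} < \vep/2$ and set $N = N_n$: any admissible long concatenation is then $\vep/2$-traced by a point of $X_n$, which in turn is corrected into a point of $X$ at a further $\dbar$-cost of at most $\vep/2$. The hard technical point will be carrying out the re-chunking rigorously despite the variable lengths of $\SB_n$-blocks, and this is precisely where Lemma \ref{connecting_words} is invoked to smoothly adjust the block count at negligible density cost.
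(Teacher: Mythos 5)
The statement you were asked to prove is not actually proved in this paper: it is imported verbatim as Theorem 27 of \cite{CKKK}, and the paper only records the four arithmetic conditions \ref{cond_1}--\ref{cond_4} on $\{t(n)\}_{n=1}^\infty$ that the admissible sequences satisfy. So there is no in-paper proof to compare against, and I assess your reconstruction on its own terms. Your architecture is the right one and is consistent with how the construction is exploited elsewhere in the paper: minimality is automatic, mixing and positive entropy follow from Fact \ref{fact1}, Lemma \ref{connecting_words} and the recursion $k(n+1)=k(n)^{t(n)}$, $l(n+1)\leq (t(n)+k(n))\,l(n)$, and $\dbar$-shadowing is obtained by tracing inside the sofic approximant $X_n$ (mixing sofic, hence specification, hence $\dbar$-shadowing) and then pushing the tracing point from $X_n$ into $X$. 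Your per-block correction cost is essentially right, though you only counted the $|\tau(n)|$ overwrite; the re-chunking via Lemma \ref{connecting_words} adds up to $3l(n)$ mismatches per block, which is harmless. Two smaller quibbles: $t(n)\geq 2^n k(n)$ does not quite imply condition \ref{cond_4} (you need roughly $t(n)\geq 2^{n+1}(k(n)+3)$), and ``each $X_n$ is mixing'' needs the one-line observation that the word lengths of $\SB_n$ form an interval with more than one element, so their gcd is $1$.

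The genuine gap is the sentence ``iterating produces a point of $X$ at total $\dbar$-cost at most $\sum_{m\geq n}2^{-m+1}$.'' Two things are silently assumed there. First, that the chain of corrected points $y^{(n)}\in X_n,\ y^{(n+1)}\in X_{n+1},\dots$ has a limit lying in $X=\bigcap_m X_m$ at all: $\dbar$ is only a pseudometric and $\dbar$-Cauchy sequences need not converge in a given shift space --- this is precisely the $\dbar$-completeness issue to which Section \ref{section_examples} is devoted, so it cannot be waved through. Second, even granting a limit $y$, the bound $\dbar(y^{(n)},y)\leq\sum_{m\geq n}\dbar(y^{(m)},y^{(m+1)})$ invokes countable subadditivity of upper density, which is false in general ($\N_0$ is a countable union of singletons of density $0$). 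Both defects are repairable, but only by using the specific geometry of the corrections: the positions altered at stage $m$ form a union of intervals of length at most $3l(m)+|\tau(m)|$ whose starting points are at least $t(m)s(m)$ apart, so every coordinate is eventually frozen (giving a genuine pointwise limit in each closed set $X_m$, hence in $X$), and in any window $[0,L)$ only the scales with $s(m+1)\lesssim L$ contribute more than one interval, so the union of all correction sets from stage $n$ onward still has upper density $O(2^{-n})$. This is exactly the bookkeeping carried out in Lemmas \ref{lemma_extension_word}--\ref{lemma_extension_word_ind} and Theorem \ref{thm_minimal_complete}; without it, your final step is an assertion rather than a proof.
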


    The sequences $\{t(n)\}_{n=1}^\infty$ mentioned in Theorem \ref{exist_minimal_example} fulfill the following conditions for every $n \in \N$:
    \begin{enumerate} [label=(\arabic*)]
        \item\label{cond_1} $t(n)> \frac{|\tau(n)|}{2l(n)-3 s(n)}$;
        \item\label{cond_2} $t(n)  \geq \frac{l(n)}{l(n)-s(n)}$;
        \item\label{cond_3} $t(n)  \geq \frac{2s(n)+2l(n) + 3|\tau(n)|}{l(n)}$;
        \item\label{cond_4} $t(n) > \frac{3 l(n) + |\tau(n)|}{s(n)2^{-n}}$.
    \end{enumerate}

    From now on, we will assume that the sequence $\{t(n)\}_{n=1}^\infty$ satisfies the conditions above. We will show that under these conditions the shift space $X$ is $\dbar$-complete. 
    
    \begin{lemma}\label{lemma_word_Bn_close}
        For every $x \in X$ and $n \in \N$ there exists a word $w \in \SB_{n+1}$ such that
        $$| \{ i \in [0, |w|) : x_i \neq w_i \} | \leq  3l(n) + |\tau(n)|.$$
    \end{lemma}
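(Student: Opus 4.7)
The plan is to use the fact that $X\subseteq X_n$: since $X_n$ is the coded shift generated by $\SB_n$, we may write
\[
    x = u\, b_1 b_2 b_3 \ldots,
\]
where $b_i \in \SB_n$ for $i \geq 1$ and $u \in \alf^*$ is either the empty word or a proper suffix of some $b_0 \in \SB_n$; in particular $0\leq |u| < l(n)$. I would then construct the target word $w = c_1 c_2 \ldots c_{t(n)} \tau(n) \in \SB_{n+1}$ by choosing the first few $c_i$'s to absorb the offset $u$ and setting the remaining ones equal to the matching $b_j$'s, so that mismatches between $w$ and $x$ can only occur on a short initial alignment segment and on the terminal copy of $\tau(n)$.

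If $|u| = 0$, simply set $c_i = b_i$ for $1 \leq i \leq t(n)$; then $w$ agrees with $x$ on $[0, |c_1 \cdots c_{t(n)}|)$ and can differ only on the last $|\tau(n)|$ positions, yielding at most $|\tau(n)| \leq 3l(n) + |\tau(n)|$ errors. If $0 < |u| < l(n)$, apply Lemma \ref{connecting_words} with $\alpha = |u|$ and the pair $b_1, b_2 \in \SB_n$. Either we obtain $c_1, c_2 \in \SB_n$ with $|c_1 c_2| = |u b_1 b_2|$, in which case set
\[
    w = c_1 c_2\, b_3 b_4 \cdots b_{t(n)}\, \tau(n),
\]
or we obtain $c_1, c_2, c_3 \in \SB_n$ with $|c_1 c_2 c_3| = |u b_1 b_2|$, in which case set
\[
    w = c_1 c_2 c_3\, b_3 b_4 \cdots b_{t(n)-1}\, \tau(n).
\]
In both cases $w$ contains exactly $t(n)$ blocks from $\SB_n$ followed by $\tau(n)$, so $w \in \SB_{n+1}$; the second subcase requires $t(n) \geq 3$, which is guaranteed by the standing conditions on the sequence $\{t(n)\}_{n=1}^\infty$.

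By the length matching, $w$ and $x$ agree on all positions lying strictly between the initial alignment segment and the terminal $\tau(n)$-block, so the total number of disagreements on $[0, |w|)$ is bounded by the length of the alignment segment plus $|\tau(n)|$. This gives at most $|c_1 c_2| + |\tau(n)| \leq 2l(n) + |\tau(n)|$ in the first subcase, and at most $|c_1 c_2 c_3| + |\tau(n)| = |u b_1 b_2| + |\tau(n)| \leq 3l(n) + |\tau(n)|$ in the second, establishing the claimed bound. The main (modest) obstacle is the bookkeeping to guarantee that $w$ qualifies as a member of $\SB_{n+1}$, i.e., consists of exactly $t(n)$ blocks from $\SB_n$ followed by $\tau(n)$; this is handled by the case split above and the fact that $t(n)$ is sufficiently large under the standing hypotheses.
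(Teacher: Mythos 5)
Your proposal is correct and follows essentially the same route as the paper: decompose $x$ into an offset prefix plus a concatenation of $\SB_n$-blocks, absorb the offset using Lemma \ref{connecting_words}, and pad with the remaining $b_j$'s and $\tau(n)$ to land in $\SB_{n+1}$. Your explicit handling of the $|u|=0$ case and the remark that the standing conditions force $t(n)\geq 3$ are minor points the paper leaves implicit, but the argument and the final bound are the same.
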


    \begin{proof}
        Let $x \in X$. Since $x \in X_n$, there exists $\al \in [0, l(n))$ such that $x_{[\al, \infty)} $ can be written as a concatenation of words from $\SB_n$, that is, there exists $\{b_i\}_{i=1}^\infty \subset \SB_n$ such that
        $$x_{[\al, \infty)} = b_1b_2\cdots .$$
        By Lemma \ref{connecting_words}, one of the following holds:
        \begin{enumerate}[label=(\arabic*)]
            \item there exist $c_1,c_2 \in \SB_n$ such that $|x_{[0,\al)} b_1b_2| = |c_1c_2|$;
            \item there exist $c_1,c_2, c_3 \in \SB_n$ such that $|x_{[0,\al)} b_1b_2| = |c_1c_2c_3|.$
        \end{enumerate}

        Let $w \in \SB_{n+1}$ be defined as
        \begin{equation*}
        w=
            \begin{cases}
            c_1c_2b_3 \ldots b_{t(n)} \tau(n), & \text{ if } (1) \text{ holds,}\\
            c_1c_2c_3b_3 \ldots b_{t(n)-1} \tau(n), & \text{ otherwise.}
            \end{cases}         
        \end{equation*}

        By construction, in the worst case scenario we would have 
       \[ | \{ i \in [0, |w|) : x_i \neq w_i \} | \leq |uc_1c_2c_3| + |\tau(n)| \leq 3 l(n) + |\tau(n)|. \qedhere\]
    \end{proof}

    We can improve Lemma \ref{lemma_word_Bn_close} to ensure that the prefix of the obtained word $w$ can be predetermined, while also providing more precise estimates on the number of positions at which subwords of $x_{[0,|w|)}$ and $w$ differ.
    
    \begin{lemma}\label{lemma_extension_word} 
        Let $x \in X$ and $n \in \N \backslash \{1\}$. For every $u \in \SB_{n}$ there exists $w \in \SB_{n+1}$ so that $u$ is prefix of $w$, and if $\ell \in [|u|, |w|)$, then
        $$| \{ i \in [|u|, \ell) : x_i \neq w_i \} | \leq  \ell \cdot 2^{-n+2}.$$
    \end{lemma}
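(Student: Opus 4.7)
The plan is to construct $w \in \SB_{n+1}$ as $w = u b_2 b_3 \cdots b_{t(n)} \tau(n)$, where the blocks $b_2, b_3, \ldots, b_{t(n)} \in \SB_n$ are chosen iteratively by repeated application of Lemma \ref{lemma_word_Bn_close} one level lower, that is, with the parameter $n$ of that lemma replaced by $n-1$. Set $x^{(2)} = \sigma^{|u|}(x) \in X$ and apply Lemma \ref{lemma_word_Bn_close} to $x^{(2)}$ to obtain a word $b_2 \in \SB_n$ satisfying $|\{ i \in [0, |b_2|) : x^{(2)}_i \neq (b_2)_i \}| \leq M$, where $M := 3 l(n-1) + |\tau(n-1)|$. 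Inductively, having chosen $b_2, \ldots, b_{k-1}$, set $x^{(k)} = \sigma^{|b_2 \cdots b_{k-1}|}(x^{(2)}) \in X$ and reapply Lemma \ref{lemma_word_Bn_close} to obtain $b_k \in \SB_n$ with the same mismatch bound against the corresponding segment of $x$. By construction, $w$ lies in $\SB_{n+1}$ and has $u$ as its first block, hence as its prefix.

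The arithmetic estimate driving the verification is $M < s(n) \cdot 2^{-n+1}$. Indeed, condition \ref{cond_4} applied at level $n-1$ rearranges to $t(n-1) s(n-1) > M \cdot 2^{n-1}$, and combined with $s(n) = t(n-1) s(n-1) + |\tau(n-1)|$ this yields the claimed inequality. To verify that $|\{i \in [|u|, \ell) : x_i \neq w_i\}| \leq \ell \cdot 2^{-n+2}$ for every $\ell \in [|u|, |w|)$, I would split into two cases. If $\ell = |u| + \sum_{j=2}^{i-1}|b_j| + m$ lies inside block $b_i$ with $m \in [0, |b_i|)$, then the cumulative mismatch is at most $(i-2)M + \min(m, M)$, while $\ell \geq (i-1) s(n) + m$ since $|u|, |b_j| \geq s(n)$; substituting $M \leq s(n) \cdot 2^{-n+1}$ yields the required inequality by direct computation, treating separately the subcases $m \leq M$ and $m > M$. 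If $\ell$ lies in the terminal $\tau(n)$ block, the extra contribution of at most $|\tau(n)|$ is absorbed using $|\tau(n)| < t(n) s(n) \cdot 2^{-n}$, which follows from condition \ref{cond_4} applied at level $n$.

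The main obstacle, and the reason a direct use of Lemma \ref{connecting_words} at level $n$ fails, is the concentration of mismatches. A naive construction that absorbs the initial offset of the $\SB_n$-decomposition of $\sigma^{|u|}(x)$ using only two or three adjustment blocks would place up to $3l(n)$ mismatches within the first few positions after $u$, exceeding the density budget $\ell \cdot 2^{-n+2}$ as soon as $n \geq 3$, since the ratio $s(n)/l(n) < 2/3$ is far from the required $3\cdot 2^{n-2}$. The key trick is therefore to distribute the alignment cost across all $t(n) - 1$ blocks by invoking Lemma \ref{lemma_word_Bn_close} at the finer level $n-1$ on each block separately, so that every block contributes at most $M$ mismatches---small enough relative to the block length $s(n)$ to meet the pointwise density bound.
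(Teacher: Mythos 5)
Your proposal is correct and follows essentially the same route as the paper: build $w = u\,b_2\cdots b_{t(n)}\tau(n)$ by applying Lemma \ref{lemma_word_Bn_close} at level $n-1$ to successive shifts of $x$, bound each block's mismatch by $M = 3l(n-1)+|\tau(n-1)| \leq s(n)\cdot 2^{-n+1}$ via condition \ref{cond_4}, and handle the terminal $\tau(n)$ block separately with $|\tau(n)| < t(n)s(n)\cdot 2^{-n}$. Your closing remark about why the mismatches must be spread across all $t(n)-1$ blocks rather than concentrated at the start is exactly the point of the lemma.
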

    \begin{proof}
        Fix $u \in \SB_{n}$. Since $x_{[|u|, \infty)} \in X$, by Lemma \ref{lemma_word_Bn_close}, there exists $v^{(2)} \in \SB_{n}$ such that
        \begin{equation*}
            \left|\left\{i \in [0, |v^{(2)}|) : x_{|u| + i} \neq v^{(2)}_i\right\}\right| \leq 3 l(n-1) + |\tau(n-1)|.
        \end{equation*}

        Similarly, for each $j \in [3, t(n)]$ we can find $v^{(j)} \in \SB_{n}$ so that
        \begin{equation*}
            \left|\left\{i \in [0, |v^{(j)}|) : x_{|uv^{(2)} \ldots v^{(j-1)}| + i} \neq v^{(j)}_i\right\}\right| \leq 3 l(n-1) + |\tau(n-1)|.
        \end{equation*}

        Set $w = uv^{(2)} \ldots v^{(t(n))} \tau(n) \in \SB_{n+1}$. For each $\ell \in [|u|, |w| - |\tau(n)|)$ we take the largest $j \in [2, t(n))$ so that $\ell \geq |uv^{(2)} \ldots v^{(t(j))}|$ and, by construction, we have
        \begin{equation}\label{eq1_lemma_extension}
            \left|\left\{i \in [|u|, \ell) : x_i \neq w_i\right\}\right| \leq j \cdot \Bigl( 3l\left(n-1\right) + |\tau(n-1)| \Bigr) .
        \end{equation}

        Note that condition \ref{cond_4} implies $3l(n-1) + |\tau(n-1)| \leq t(n-1)s(n-1) \cdot 2^{-n+1}$. Since $\ell \geq j \cdot s(n)$, it follows from \eqref{eq1_lemma_extension} that
        \begin{align}
            \begin{split}
            |\{i \in [|u|, \ell) : x_i \neq w_i\}| &\leq j \cdot t(n-1)s(n-1)\cdot 2^{-n+1} \\
            &\leq j \cdot s(n) \cdot 2^{-n+1} \\
            &\leq  \ell \cdot 2^{-n+1}.
            \end{split}
        \end{align}

        For $\ell \in [|w|- |\tau|, |w|)$, note that condition \ref{cond_4} implies $|\tau(n)| < t(n)s(n)\cdot 2^{-n}$. Therefore, since $\ell \geq t(n)s(n)$, we conclude that
        \begin{align*}
            \begin{split}
            |\{i \in [|u|, \ell) : x_i \neq w_i\}| &\leq  t(n)s(n) \cdot 2^{-n+1} + |\tau(n)| \\
            &\leq t(n)s(n) \cdot 2^{-n+1}  + t(n)s(n)\cdot 2^{-n}\\
            &\leq  \ell \cdot 2^{-n+2}. \qedhere
            \end{split}
        \end{align*}
    \end{proof}

    Using a simple inductive argument we can extend Lemma \ref{lemma_extension_word} to the following result.

    \begin{lemma}\label{lemma_extension_word_ind} 
        Let $x \in X$ and $n \in \N$. For every $u \in \SB_{n}$ and $m > n$, there exists $w \in \SB_{m}$ so that $u$ is prefix of $w$ and if $\ell \in [|u|, |w|)$, then
        \begin{equation*}
            | \{ i \in [|u|, \ell) : x_i \neq w_i \} | \leq  \ell \cdot 2^{-n+2}.
        \end{equation*}
    \end{lemma}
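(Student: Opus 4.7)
My plan is to prove this by induction on $m$, with the base case $m = n + 1$ covered directly by Lemma \ref{lemma_extension_word} whenever $n \geq 2$. The only case the base case does not cover is $n = 1$: there the claimed bound $\ell \cdot 2^{-n+2} = 2\ell$ is automatic because the count $|\{i \in [|u|, \ell) : x_i \neq w_i\}|$ is trivially at most $\ell - |u| < \ell$, and the existence of some $w \in \SB_m$ having a given $u \in \SB_1$ as a prefix follows by unfolding the recursive definition of $\SB_m$ down to the base alphabet $\SB_1$ and fixing the deepest prefix word to be $u$.

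For the inductive step, suppose the statement is established at level $m$, giving some $w_m \in \SB_m$ with $u$ as a prefix and satisfying the bound on $[|u|, |w_m|)$. I would apply Lemma \ref{lemma_extension_word} with $w_m \in \SB_m$ in the role of the prefix (which is legitimate since $m \geq n + 1 \geq 2$), producing $w_{m+1} \in \SB_{m+1}$ that extends $w_m$ and obeys
$$|\{i \in [|w_m|, \ell) : x_i \neq w_{m+1, i}\}| \leq \ell \cdot 2^{-m+2}, \quad \ell \in [|w_m|, |w_{m+1}|).$$
Transitivity of the prefix relation ensures that $u$ remains a prefix of $w_{m+1}$.

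To conclude, I would verify the claimed bound for every $\ell \in [|u|, |w_{m+1}|)$ by splitting at $|w_m|$: for $\ell \leq |w_m|$ the words $w_m$ and $w_{m+1}$ coincide on $[0,\ell)$ and the inductive hypothesis applies directly; for $\ell > |w_m|$ the count splits into a contribution on $[|u|, |w_m|)$ (bounded via the inductive hypothesis) and a contribution on $[|w_m|, \ell)$ (bounded by $\ell \cdot 2^{-m+2}$ from the fresh application of Lemma \ref{lemma_extension_word}). Because the added level $m$ is strictly larger than $n$, the per-level contribution decays by a factor of at least two relative to the target envelope, which allows the combined bound to remain inside $\ell \cdot 2^{-n+2}$.

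The delicate point will be this error accounting: at each iterated step the freshly added error term $\ell \cdot 2^{-m+2}$ must fit inside the fixed envelope $\ell \cdot 2^{-n+2}$ together with the already-accumulated error from earlier levels. I would handle this by exploiting the geometric decay of $2^{-j+2}$ as $j$ ranges from $n$ upward, together with the rapid growth of the word lengths $|w_j|$ forced by condition \ref{cond_4} (which makes $t(j)$ large enough that $|w_{j+1}|$ dwarfs $|w_j|$), so that the accumulated errors from earlier levels are negligible relative to the current $\ell$ and the sum telescopes strictly within the prescribed bound.
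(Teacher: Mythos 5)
Your overall strategy --- iterate Lemma \ref{lemma_extension_word} level by level and sum the per-level errors --- is exactly the ``simple inductive argument'' the paper alludes to (it gives no further details), and you correctly isolate the one delicate point. However, the error accounting in your inductive step does not close with the stated constant. Writing $w_m\in\SB_m$ for the word built so far and $w\in\SB_{m+1}$ for its extension, your split gives at best
\[
\bigl|\{\,i\in[|u|,\ell): x_i\neq w_i\,\}\bigr| \;\le\; |w_m|\cdot 2^{-n+2} \;+\; \ell\cdot 2^{-m+2},
\]
and this is at most $\ell\cdot 2^{-n+2}$ only when $|w_m|\le \ell\,(1-2^{\,n-m})$, i.e.\ only when $\ell\ge 2|w_m|$ in the very first step $m=n+1$. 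Since $\ell$ ranges over all of $[|w_m|,|w_{m+1}|)$, including values just above $|w_m|$, the claimed envelope is violated there: the ``accumulated errors are negligible relative to $\ell$'' argument via condition \ref{cond_4} does rescue the levels below $m$ (their lengths are dwarfed by $s(m)\le\ell$), but not the immediately preceding level $m$ itself, whose length is comparable to $\ell$ at the left end of the range. There is also a boundary issue: the inductive hypothesis controls the count on $[|u|,\ell')$ only for $\ell'<|w_m|$, so passing to the full interval $[|u|,|w_m|)$ costs an extra $+1$, which already breaks $\ell\cdot 2^{-n+2}$ for $n\ge 3$ unless the hypothesis is restated with a closed right endpoint (which the proof of Lemma \ref{lemma_extension_word} does support).

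Two repairs are available. The cheap one is to weaken the conclusion to $\ell\cdot\sum_{j=n}^{\infty}2^{-j+2}<\ell\cdot 2^{-n+3}$: with the inductive hypothesis ``the count on $[|u|,\ell]$ is at most $\ell\sum_{j=n}^{m-1}2^{-j+2}$'' the step closes immediately, and the weaker constant is entirely sufficient for the only use of this lemma, namely \eqref{eq_alj_alj+1} in the proof of Theorem \ref{thm_minimal_complete}, where one simply shifts the index by one. The faithful repair imports the finer information from the proof of Lemma \ref{lemma_extension_word}: away from the $\tau(j)$-suffix the level-$j$ error is in fact at most $\ell\cdot 2^{-j+1}$, and once $\ell$ reaches that suffix one has $\ell\ge t(j)s(j)$, so condition \ref{cond_4} forces every lower-level word of length at most $l(j)$ to be shorter than $\ell\cdot 2^{-j}/3$; combining the sharper main-part bound with this length separation recovers $\ell\cdot 2^{-n+2}$. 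Either way, some bookkeeping beyond what you describe is required to justify the constant as stated.
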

    
    \begin{theorem}\label{thm_minimal_complete}
        The shift space $X$ is $\dbar$-complete.
    \end{theorem}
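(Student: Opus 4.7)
The plan is to adapt the proof of Theorem \ref{thm_proximal_complete}, replacing the synchronizing words by the extension Lemma \ref{lemma_extension_word_ind}. Given a $\dbar$-Cauchy sequence $\{x^{(n)}\}_{n=1}^\infty \subset X$, I first pass to a subsequence so that $\dbar(x^{(n)}, x^{(n+1)}) < 2^{-n-1}$ for every $n \in \N$, which yields $\dbar(x^{(n)}, x^{(m)}) < 2^{-n}$ for all $m \geq n$.

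I would then inductively construct a strictly increasing sequence $n_1 < n_2 < \cdots$ and words $u_k \in \SB_{n_k}$ such that $u_k$ is a prefix of $u_{k+1}$. At step $k+1$, given $u_k \in \SB_{n_k}$, I apply Lemma \ref{lemma_extension_word_ind} with $u = u_k$, $x = x^{(k+1)}$, and $m = n_{k+1}$ (for a sufficiently large $n_{k+1}$) to obtain $u_{k+1} \in \SB_{n_{k+1}}$ extending $u_k$ and satisfying $|\{i \in [|u_k|, \ell) : x^{(k+1)}_i \neq (u_{k+1})_i\}| \leq \ell \cdot 2^{-n_k+2}$ for every $\ell \in [|u_k|, |u_{k+1}|)$. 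I choose $n_{k+1}$ large enough so that, in addition, (i) $|u_{k+1}| \geq 2|u_k|$, and (ii) $|\{i \in [0, \ell) : x^{(m)}_i \neq x^{(k+1)}_i\}| \leq \ell \cdot 2^{-m}$ for every $m \in \{1,\ldots,k\}$ and every $\ell \geq |u_{k+1}|$; both are achievable since $s(n) \to \infty$ and the Cauchy property provides the required thresholds. The limit point $x \in \alf^\infty$ is then defined as the unique sequence with $x_{[0, |u_k|)} = u_k$ for every $k$.

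To see $x \in X$, I would observe that the construction $w = uv^{(2)}\cdots v^{(t(n))}\tau(n)$ in the proof of Lemma \ref{lemma_extension_word} always produces $w \in \SB_{n+1}$ whose decomposition as a concatenation of $\SB_n$-words begins with the prefix $u$. Iterating this through Lemma \ref{lemma_extension_word_ind}, each $u_\ell$ (for $\ell > k$) is a concatenation of $\SB_{n_k}$-words starting with $u_k$. Hence $x$ is an infinite concatenation of $\SB_{n_k}$-words for every $k$, placing $x \in X_{n_k}$. Since the $X_n$ form a decreasing sequence and $n_k \to \infty$, we conclude $x \in \bigcap_n X_n = X$.

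For the convergence $\dbar(x, x^{(m)}) \to 0$, I fix $m$ and for $\ell \in [|u_k|, |u_{k+1}|)$ with $k \geq m$ split
\begin{align*}
    |\{i \in [0, \ell) : x_i \neq x^{(m)}_i\}| &\leq |u_m| + \sum_{j=m}^{k-1} |\{i \in [|u_j|, |u_{j+1}|) : x_i \neq x^{(m)}_i\}| \\
    &\quad + |\{i \in [|u_k|, \ell) : x_i \neq x^{(m)}_i\}|.
\end{align*}
On $[|u_j|, |u_{j+1}|)$ we have $x_i = (u_{j+1})_i$, so the triangle inequality together with Lemma \ref{lemma_extension_word_ind} and condition (ii) yields a bound of the form $|u_{j+1}|(2^{-n_j+2} + 2^{-m})$ for each $j \geq m$, and analogously for the tail $[|u_k|, \ell)$. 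The doubling condition (i) gives $\sum_{j=m}^{k-1} |u_{j+1}| \leq 2|u_k| \leq 2\ell$, while $|u_m|/\ell \to 0$ and $2^{-n_k+2} \to 0$ as $\ell \to \infty$. Combining these should yield $\dbar(x, x^{(m)}) = O(2^{-m}) \to 0$. The main obstacle will be this last bookkeeping step, which mirrors the strategy of Theorem \ref{thm_proximal_complete} but requires careful handling of the $n_k$-dependent extension errors provided by Lemma \ref{lemma_extension_word_ind}.
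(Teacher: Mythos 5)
Your proposal is essentially the paper's own argument: pass to a rapidly Cauchy subsequence, build a nested chain $u_k\in\SB_{n_k}$ via Lemma \ref{lemma_extension_word_ind} so that each new block approximates $x^{(k+1)}$, define the limit point by its prefixes, check membership in $X$ through the $\SB_{n_k}$-decompositions, and conclude by the telescoping triangle inequality with the doubling condition $|u_{k+1}|\geq 2|u_k|$. The one adjustment needed is in your condition (ii): the tail interval $[|u_k|,\ell)$ with $\ell<|u_{k+1}|$ requires the Cauchy bound between $x^{(m)}$ and $x^{(k+1)}$ already at scales $\ell\geq |u_k|$, so the threshold must be keyed to $|u_k|$ rather than $|u_{k+1}|$ (i.e., choose $n_k$ so that $s(n_k)$ exceeds the Cauchy thresholds for the pairs $(m,k+1)$ with $m\leq k$), which is exactly how the paper arranges it via the requirement $s(m_{j+1})>k_{j+1}$.
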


    \begin{proof}
        Let $\{x^{(n)}\}_{n=1}^\infty \subset X$ be a $\dbar$-Cauchy sequence. Passing, if necessary, to a subsequence we can assume that for every $n \in \N$ the following holds
        \begin{equation*}
            \dbar \left(x^{(n)}, x^{(n+1)}\right) < 2^{-n-2}.
        \end{equation*}

        There exists a strictly increasing sequence $\{k_n\}_{n=1}^\infty \subset \N$ such that for every $n \in \N$ and $\ell \geq k_n$ we have
          \begin{equation}\label{Cauchy_bound}
            \left| \left\{ i \in [0, \ell) : x^{(n)}_i \neq x^{(n+1)}_i \right\} \right| < \ell \cdot 2^{-n-2}.
        \end{equation}

        By the triangle inequality, it follows from \eqref{Cauchy_bound} that for every $m > n$ and $\ell \geq k_{m-1}$ we have 
        \begin{equation}\label{upper_bound_words}
            \left| \left\{  i\in [0,\ell) : x_i^{(n)}\neq x_i^{(m)} \right\}\right| \leq \ell \cdot \left(2^{-n-2}+ \ldots +2^{-m}\right) <\ell \cdot 2^{-n-1}.
        \end{equation}

        We will construct a strictly increasing sequence $\{m_j\}_{j=1}^\infty \subset \N$ and a sequence of words $\{w^{(j)}\}_{j = 1}^\infty \subset \lang(X)$ such that for all $j \in \N$ we have $w^{(j)} \in \SB_{m_j}$, and $w^{(j)}$ is a prefix of $w^{(j+1)}$. We will denote $\al_j = |w^{(j)}|$. We then define $w \in \alf^\infty$ to be the unique point such that for every $j\in \N$ we have $w_{[0, \al_j)} = w^{(j)}$. 

        Let $m_1 \geq 4$ be such that $s(m_1) > k_1$. By Lemma \ref{lemma_word_Bn_close}, there exists $w^{(1)} \in \SB_{m_1}$ such that 
        $$\left|\left\{ i \in [0,\al_1) : x^{(1)}_i \neq w^{(1)}_i \right\}\right| \leq 3l(m_1 -1) + \left|\tau(m_1 -1)\right|.$$

        The construction follows by taking $m_{j+1} > m_{j}$ satisfying $s(m_{j+1}) > k_{j+1}$. Note that $m_1 \geq 4$ implies $m_j \geq j+3$ for all $j \in \N$. By Lemma \ref{lemma_extension_word}, there exists $w^{(j+1)} \in \SB_{m_{j+1}}$ with $w^{(j)}$ as a prefix, and so that for every $\ell \in [\al_{j}, \al_{j+1})$ it holds that
        \begin{equation}\label{eq_alj_alj+1}
            \left|\left\{ i \in [\al_j, \ell) : x^{(j+1)}_i \neq w^{(j+1)}_i\right\}\right| \leq  \ell \cdot 2^{-m_{j} +2} \leq \ell \cdot 2^{-j-1}.
        \end{equation}
        
        Note that, since $\{m_j\}_{j=1}^\infty \nearrow \infty$ and $w_{[0, \al_j)} \in \SB_{m_j}$, we have $w \in X$. It remains to show that $\lim_{n \to \infty} \dbar(x^{(n)}, w) = 0$. Fix $n \in \N$. For every $\ell \geq \al_{n+1}$ there exists $k = k(\ell) \in \N$ such that $\ell \in [\al_{n+k}, \al_{n+k+1})$. By the triangle inequality, we have
        \begin{align*}
            \left| \left\{ i \in [0, \ell) : x^{(n)}_i \neq w_i \right\} \right| &\leq \left| \left\{ i \in [0, \al_n) : x^{(n)}_i \neq w_i \right\} \right| \\  
            & +   \sum_{j=0}^{k-1} \left| \left\{ i \in [\al_{n+j}, \al_{n+j+1}) : x^{(n)}_i \neq x^{(n+j+1)}_i \right\} \right| \\ &+
             \sum_{j=0}^{k-1} \left| \left\{ i \in [\al_{n+j}, \al_{n+j+1}) : x^{(n+j+1)}_i \neq w_i \right\} \right| \\ &+ 
             \left| \left\{ i \in [\al_{n+k}, \ell) : x^{(n)}_i \neq x^{(n+k+1)}_i \right\} \right| \\& +
              \left| \left\{ i \in [\al_{n+k}, \ell) : x^{(n+k+1)}_i \neq w_i \right\} \right|.
        \end{align*}

        It is not hard to see that $\al_{n+j+1} \geq 2 \al_{n+j}$ for all $j \in \N$. Moreover, by construction, we have $\al_{n+j} > k_{n+j}$ for every $j \in \N$.  Thus, using \eqref{upper_bound_words} we obtain
        \begin{equation}\label{eq_sum_interpolation_1}
            \sum_{j=0}^{k-1} \left| \left\{ i \in [\al_{n+j}, \al_{n+j+1}) : x^{(n)}_i \neq x^{(n+j+1)}_i \right\} \right| \leq \sum_{j=1}^{k} \al_{n+j} \cdot 2^{-n-1} \leq \al_{n+k} \cdot  2^{-n} .
        \end{equation}

        It also follows from \eqref{upper_bound_words} that
        \begin{equation}\label{last_interpolation}
            \left| \left\{ i \in [\al_{n+k}, \ell) : x^{(n)}_i \neq x^{(n+k+1)}_i \right\} \right| \leq \ell \cdot 2^{-n-1}.
        \end{equation}

        Using \eqref{eq_alj_alj+1} we obtain 
        \begin{align}\label{eq_suminterpolation_2}
        \begin{split}
            \sum_{j=0}^{k-1} \left| \left\{ i \in [\al_{n+j}, \al_{n+j+1}) : x^{(n+j+1)}_i \neq w_i \right\} \right| &\leq \sum_{j=1}^{k} \al_{n+j} \cdot 2^{-n-j-1} \\ &\leq \al_{n+k} \cdot 2^{-n}.
        \end{split}
        \end{align} 

        Hence, combining \eqref{eq_alj_alj+1}, \eqref{eq_sum_interpolation_1}, \eqref{last_interpolation} and \eqref{eq_suminterpolation_2}, we conclude that
        \begin{align*}
        \begin{split}
            \left| \left\{ i \in [0, \ell) : x^{(n)}_i \neq w_i \right\} \right| &\leq \al_n + \al_{n+k} \cdot 2^{-n} + \al_{n+k} \cdot 2^{-n} + \ell \cdot 2^{-n-1} + \ell \cdot 2^{-n-k-1} \\ 
            &\leq \al_n + \ell \cdot 2^{-n+2}.
        \end{split}
        \end{align*}

        Therefore, we conclude that $\dbar \left(x^{(n)}, w\right) \leq 2^{-n+2}$ for all $n \in \N$, and consequently, 
        $\lim_{n \to \infty} \dbar\left(x^{(n)}, w\right) = 0$.
        \end{proof}

    \begin{corollary}\label{minimal_AASP}
        The minimal shift space $X$ has the vague specification property.
    \end{corollary}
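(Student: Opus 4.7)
The plan is to simply chain together the equivalences and constructions that have already been established earlier in the paper, so no new technical work is required at this stage. The corollary is really a bookkeeping statement: it packages the deep input (namely that $X$ is $\dbar$-complete, which was proved in Theorem \ref{thm_minimal_complete}) together with the general equivalences between $\dbar$-shadowing, asymptotic average shadowing, and vague specification.

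Concretely, I would proceed in three short steps. First, I would observe that $X$ is surjective: by Theorem \ref{exist_minimal_example}, $X$ is minimal, and the image $\sigma(X)$ is a non-empty closed $\sigma$-invariant subset of $X$, so by minimality $\sigma(X)=X$. Second, I would combine two facts about $X$: by Theorem \ref{exist_minimal_example}, $X$ has the $\dbar$-shadowing property, and by Theorem \ref{thm_minimal_complete}, $X$ is $\dbar$-complete. Applying Corollary \ref{cor:AASP_equiv_dbar-shadow} to the surjective shift space $X$, it follows that $X$ has the asymptotic average shadowing property. Third, I would invoke Theorem \ref{thm:AASP-VSP} (equivalently, Theorem \ref{thm_AASP_VSP_intro}) to conclude that $X$ has the vague specification property.

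There is no genuine obstacle here; all the serious work was already absorbed into Theorem \ref{thm_minimal_complete} (the $\dbar$-completeness of $X$, which required the careful inductive construction using Lemmas \ref{lemma_word_Bn_close}, \ref{lemma_extension_word}, and \ref{lemma_extension_word_ind}), and into the general equivalence results of the earlier sections. The final proof should therefore be essentially a one-paragraph citation argument.
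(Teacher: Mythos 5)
Your proposal is correct and follows exactly the route the paper intends: combine the $\dbar$-shadowing property of $X$ (Theorem \ref{exist_minimal_example}) with its $\dbar$-completeness (Theorem \ref{thm_minimal_complete}) and its surjectivity (immediate from minimality) to get the asymptotic average shadowing property via Corollary \ref{cor:AASP_equiv_dbar-shadow}, then apply Theorem \ref{thm:AASP-VSP}. The explicit observation that minimality implies surjectivity is a welcome detail the paper leaves implicit.
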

    
    \section{Applications}\label{section_applications}

    The equivalence of the vague specification and the asymptotic average shadowing properties allows us to use results already known about one property when dealing with the other. Consequently, we can address questions raised about systems that exhibit these properties.

    Kamae showed in \cite{Kamae} that the specification property together with expansivity are sufficient conditions for the vague specification property. However, it is already known that the specification property alone implies the asymptotic average shadowing property (see \cite{KKO,
    KLO, KLO2}). Therefore, we can remove the assumption of expansivity from Kamae's result. 

    \begin{corollary}
        If a topological dynamical system has the specification property, then it has the vague specification property.
    \end{corollary}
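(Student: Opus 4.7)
The proof is essentially a two-step chain of implications, with both steps already available in the paper or its references, so the plan is very short. First, I would invoke the fact, stated in the paragraph just above the corollary and attributed to \cite{KKO, KLO, KLO2}, that the specification property implies the asymptotic average shadowing property. (If pressed to make the note self-contained, one could cite specifically the implication ``almost specification $\Rightarrow$ AASP'' from \cite{KLO2} combined with the standard implication ``specification $\Rightarrow$ almost specification'' visible in the diagram in the introduction.)

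Second, I would apply Theorem \ref{thm:AASP-VSP} of the present paper, which asserts the equivalence of the asymptotic average shadowing property and the vague specification property for any topological dynamical system $(X,T)$. Composing these two implications gives the desired conclusion: specification $\Rightarrow$ AASP $\Rightarrow$ vague specification.

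There is no real obstacle here, since both ingredients have already been established; the corollary is included precisely to record the strengthening of Kamae's original result in \cite{Kamae} by removing the expansivity hypothesis. The only thing to be careful about in the write-up is to state the logical flow cleanly and to point out explicitly that expansivity, which was used by Kamae to pass from specification to his vague specification property, is no longer needed because Theorem \ref{thm:AASP-VSP} provides the equivalence with AASP without any such assumption on $(X,T)$.
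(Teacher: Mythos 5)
Your proposal is correct and follows exactly the paper's route: the corollary is justified by the known implication that specification implies the asymptotic average shadowing property (cited from \cite{KKO, KLO, KLO2}) combined with Theorem \ref{thm:AASP-VSP}, which is precisely the chain you describe. Your remark about removing Kamae's expansivity hypothesis matches the paper's own framing of the result.
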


    The rest of the section is devoted to answering questions from \cite{DW} and \cite{KKO}.

    \begin{question}[\cite{DW}]\label{question_DW}
        What is the relation between the vague and weak specification properties?
    \end{question}

    \begin{question}[Question 10.2 in \cite{KKO}]\label{question_minimal_aasp}
        Is there a nontrivial minimal topological dynamical system with the almost specification property or the (asymptotic) average shadowing property?
    \end{question}

    \begin{question}[Question 10.3 in \cite{KKO}]\label{question_ASP_implies_AASP}
        Does the (asymptotic) average shadowing property imply the almost specification property? Does the average shadowing property imply the asymptotic average shadowing property?
    \end{question}

        \begin{question}[Question 10.6 in \cite{KKO}]\label{question_factors}
        Is the (asymptotic) average shadowing property or the almost specification property inherited by factors?
    \end{question}

    \subsubsection*{Answer to Question \ref{question_DW}}

    To answer this question, we rely on results from \cite{KKO} and \cite{KLO2}. We decided to skip the definitions of the weak specification and the almost specification properties, and only invoke the results that are relevant to our purpose. We refer to \cite{KLO} for definitions and further information. 

    \begin{theorem}[Theorem 18 in \cite{KLO2}]
        If a topological dynamical system has the weak specification property, then it has the asymptotic average shadowing property.
    \end{theorem}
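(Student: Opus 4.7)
The plan is to construct, for any given asymptotic average pseudo-orbit $\seqx \in X^\infty$, a point $z \in X$ that traces $\seqx$ in the Besicovitch sense, i.e.\ with $\distB(\orbz, \seqx) = 0$. As a first reduction, since the weak specification property implies topological mixing, and hence chain mixing, Proposition \ref{AAPO_asympPO} lets me replace $\seqx$ by an asymptotic pseudo-orbit $\underline{y} \in X^\infty$ with $\distB(\seqx, \underline{y}) = 0$. It thus suffices to trace asymptotic pseudo-orbits, which is more convenient because their errors $\dist(T(y_i), y_{i+1})$ vanish pointwise rather than merely on average.

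For the reduced problem, I would choose a rapidly increasing sequence of breakpoints $0 = a_0 < a_1 < a_2 < \cdots$ and thresholds $\de_k \searrow 0$ so that on each window $I_k = [a_k, a_{k+1})$, the errors $\dist(T(y_i), y_{i+1})$ are uniformly below $\de_k$. By uniform continuity of $T$, the true forward orbit $\{T^j(y_{a_k})\}_{j \geq 0}$ then closely approximates $\{y_{a_k + j}\}$ on an initial subinterval of $I_k$ of some length $\ell_k \leq a_{k+1} - a_k$. I would then invoke the weak specification property applied to the orbit segments $\bigl(y_{a_k}, [0, \ell_k)\bigr)$: it yields a point $z \in X$ whose orbit $\vep_k$-traces each segment along the time window $[a_k, a_k + \ell_k)$, provided the gaps $(a_k + \ell_k, a_{k+1})$ are at least as large as the specification gap function $M_{\vep_k}(\ell_k)$. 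Since this gap function is sublinear in $\ell_k$, a careful choice of the parameters makes both the gap density and the tracing errors negligible, giving $\distB(\orbz, \underline{y}) = 0$.

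The main obstacle will be calibrating the parameters $a_k$, $\ell_k$, $\de_k$, and $\vep_k$ to balance several competing constraints: the window lengths $a_{k+1} - a_k$ must be long enough that the sublinear specification gap $M_{\vep_k}(\ell_k)$ becomes negligible relative to them; the block lengths $\ell_k$ on which $\{T^j(y_{a_k})\}$ closely shadows $\{y_{a_k + j}\}$ depend on the modulus of continuity of $T$ iterated many times, and this modulus can degenerate badly when $T$ is merely continuous; and the tracing errors $\vep_k$ supplied by weak specification must decay compatibly with the target vanishing average. Controlling the potentially explosive behaviour of iterated moduli of continuity, most likely by forcing $\de_k$ to decay so fast that orbit drift over the chosen block length $\ell_k$ remains bounded by $\vep_k$, is where the main technical work lies. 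An attractive alternative, which sidesteps this tension entirely, is to combine the known implication \emph{weak specification $\Rightarrow$ average shadowing} (see \cite{KLO2}) with a separate verification that weak specification forces Besicovitch completeness, and then invoke Theorem \ref{equiv_AASP_ASP+Besicovitch}; in that route, the principal task becomes proving Besicovitch completeness, which would be done by showing that any Besicovitch-Cauchy sequence can be simultaneously shadowed on average by a single orbit extracted using weak specification.
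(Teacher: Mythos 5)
First, a point of comparison: the paper does not prove this statement at all --- it is imported as Theorem 18 of \cite{KLO2} and used as a black box --- so there is no in-paper argument to measure your sketch against; I can only assess it on its own terms. Your architecture is the natural one and matches how such implications are proved in the literature: reduce to asymptotic pseudo-orbits via chain mixing and Proposition \ref{AAPO_asympPO} (weak specification does imply mixing, hence chain mixing and surjectivity), then use the sublinearity of the gap function to specify along a sparse system of windows.

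There is, however, a genuine gap at the decisive step. A single application of the weak specification property produces, for one \emph{fixed} tolerance $\vep$, a point tracing all prescribed segments within that same $\vep$; it does not produce a point that $\vep_k$-traces the $k$-th segment with $\vep_k \searrow 0$, which is what you assert when you write that it ``yields a point $z$ whose orbit $\vep_k$-traces each segment.'' With a fixed $\vep$ you only obtain $\distB(\orbz,\underline{y}) \le \vep$ --- essentially average shadowing along asymptotic pseudo-orbits --- whereas the asymptotic average shadowing property demands $\distB(\orbz,\underline{y})=0$. Upgrading from ``for every $\vep$ there is an $\vep$-tracer'' to ``there is a $0$-tracer'' is exactly the role played by the Besicovitch-completeness hypothesis in Theorem \ref{equiv_AASP_ASP+Besicovitch}, and it requires an additional chaining-and-limit construction (for instance: build $z^{(m+1)}$ by specifying a long initial orbit segment of $z^{(m)}$ together with the pseudo-orbit at tolerance $\vep_{m+1}$ on the next block, control the accumulated errors $\vep_m+\vep_{m+1}+\cdots$, and extract a limit point using that finite-horizon tracing conditions are closed). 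Your alternative route makes this dependence explicit but leaves ``weak specification implies Besicovitch completeness'' as an unproved promise, and proving it runs into precisely the same difficulty; until that step is carried out, neither route closes. The remaining calibration issues you flag --- iterated moduli of continuity, and fitting the gaps $M_{\vep}(\ell_{k+1})$ inside windows whose lengths must therefore not grow too quickly --- are real but routine by comparison.
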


    Immediately from Theorem \ref{thm:AASP-VSP} we obtain the following corollary.
    \begin{corollary}\label{weak_spec_implies_vague_spec}
        If a topological dynamical system has the weak specification property, then it has the vague specification property.
    \end{corollary}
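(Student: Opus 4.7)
The plan is to prove this corollary by chaining two results that are already available in the excerpt, so no new technical machinery is required. First, I would invoke Theorem 18 in \cite{KLO2}, cited just above the statement, which asserts that any topological dynamical system with the weak specification property has the asymptotic average shadowing property. Second, I would apply Theorem \ref{thm:AASP-VSP} (our Theorem \ref{thm_AASP_VSP_intro}), which establishes the equivalence between the asymptotic average shadowing property and the vague specification property. Composing these two implications yields the conclusion.

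More explicitly, let $(X,T)$ be a topological dynamical system with the weak specification property. By Theorem 18 in \cite{KLO2}, $(X,T)$ has the asymptotic average shadowing property. By the ``only if'' direction of Theorem \ref{thm:AASP-VSP}, the asymptotic average shadowing property implies the vague specification property. Hence $(X,T)$ has the vague specification property, as required.

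There is really no obstacle to this proof: the entire content was already developed in Sections \ref{section_pseudo_orbits} and \ref{section_tracing_properties}, where the nontrivial ingredient (the equivalence of asymptotic average pseudo-orbits and vague pseudo-orbits, Theorem \ref{thm:on-equi-of-v.p.o-a.a.p.o}) was used to derive Theorem \ref{thm:AASP-VSP}. The remaining step is the already-known implication from weak specification to asymptotic average shadowing, which we simply cite. I would therefore present the argument in a single short paragraph, emphasizing that this corollary is precisely the answer to Question \ref{question_DW} of Downarowicz and Weiss, since it shows that the weak specification property is strictly stronger than the vague specification property when combined with the examples in Section \ref{section_examples} (minimal or proximal systems cannot satisfy weak specification, yet satisfy vague specification).
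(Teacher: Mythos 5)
Your proof is correct and matches the paper's argument exactly: the corollary is obtained by chaining Theorem 18 of \cite{KLO2} (weak specification implies asymptotic average shadowing) with the equivalence in Theorem \ref{thm:AASP-VSP}. The additional remarks about strictness are beyond the statement itself (the paper derives strictness via the almost specification property rather than the minimal/proximal examples), but the proof of the corollary as stated is identical in substance.
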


    The converse is not true, and the almost specification property is used to prove it. Due to \cite{KKO}, it is known that the asymptotic average shadowing property (and consequently the vague specification property) is a consequence of the almost specification property. 

    \begin{theorem}[Theorem 3.5 in \cite{KKO}]
        If a topological dynamical system has the almost specification property, then it has the asymptotic average shadowing property.
    \end{theorem}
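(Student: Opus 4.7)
The plan is to use the almost specification property to build, for each $K \in \N$, a point whose orbit traces a finite concatenation of genuine orbit segments approximating the given asymptotic average pseudo-orbit, then pass to a limit by compactness of $X$. Since the almost specification property implies topological mixing, and hence chain mixing, Proposition \ref{AAPO_asympPO} allows replacing any asymptotic average pseudo-orbit $\seqx$ by an asymptotic pseudo-orbit $\underline{y}$ with $\dist_B(\seqx, \underline{y}) = 0$; it therefore suffices to produce $z \in X$ such that $\dist_B(\orbz, \underline{y}) = 0$.

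Next, I would choose a sequence $\vep_k \searrow 0$ and a rapidly growing sequence $n_0 = 0 < n_1 < n_2 < \cdots$ so that on each block $[n_{k-1}, n_k)$ the asymptotic-pseudo-orbit errors $\dist(T(y_i), y_{i+1})$ are uniformly smaller than a quantity which, by uniform continuity of $T$, forces the actual orbit segment $y_{n_{k-1}}, T(y_{n_{k-1}}), \ldots, T^{n_k - n_{k-1} - 1}(y_{n_{k-1}})$ to remain within $\vep_k$ of $\underline{y}$ except on a density-negligible portion of the block. The problem then reduces to producing a single point whose orbit traces this infinite concatenation of genuine orbit segments.

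For each $K \in \N$, applying the almost specification property to the first $K$ orbit segments $(y_{n_{k-1}}, n_k - n_{k-1})$, $k = 1, \ldots, K$, yields a point $z_K \in X$ whose orbit $\vep_K$-traces each of those segments outside a mistake set whose size is sublinear in the block length, as controlled by the almost specification mistake function. Extracting a subsequential limit $z_{K_j} \to z$ by compactness of $X$, combining the mistake estimates with the per-block approximation estimates, and using that all exceptional sets have density tending to zero, one verifies that $\underline{d}(\{n \in \N_0 : \dist(T^n(z), y_n) < \vep\}) = 1$ for every $\vep > 0$, which is equivalent to $\dist_B(\orbz, \underline{y}) = 0$.

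The main obstacle is the coordinated choice of parameters. One must balance $\vep_k$, the block lengths $n_k - n_{k-1}$, the mistake allowances prescribed by the almost specification mistake function, and any spacings between blocks so that every source of error—the residual positions within a block where the true orbit diverges from $\underline{y}$, the mistake positions introduced when gluing with almost specification, and any gap positions—aggregates into a set of zero upper density. The standard way to arrange this is to force each block length to dominate all cumulative error contributions of earlier blocks, so that Cesàro averages are eventually dominated by blocks carrying arbitrarily small per-position error $\vep_k$, from which the desired density-one tracing condition follows in the limit.
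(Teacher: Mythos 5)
This statement is quoted from \cite{KKO} (Theorem 3.5 there) and the paper gives no proof of it, so there is nothing in-paper to compare against; your outline does follow the general strategy of the known proof (reduce to asymptotic pseudo-orbits via chain mixing and Proposition \ref{AAPO_asympPO}, then glue genuine orbit segments with almost specification and pass to a limit), but two steps are genuinely broken as written. The main one is the claim in your second paragraph: if the pseudo-orbit errors on a block are uniformly below $\delta$, the orbit segment of $y_{n_{k-1}}$ stays $\vep_k$-close to $\underline{y}$ only for an initial stretch of length $L=L(\delta,\vep_k)$ dictated by the uniform continuity of $T,T^2,\dots,T^L$ (exactly the triangle-inequality estimate in the proof of Theorem \ref{thm:on-equi-of-v.p.o-a.a.p.o}); beyond that there is no control at all. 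Since the decay rate of the errors of a given asymptotic pseudo-orbit is imposed on you, a single orbit segment cannot track a ``rapidly growing'' block except on a negligible set --- in fact the controlled portion is the negligible one (think of the full shift, where a $2^{-m}$-pseudo-orbit is tracked by the orbit of its initial point for only about $m$ steps). The correct construction uses many short segments per error level: once the errors drop below $\delta_k$, chop that stretch of the pseudo-orbit into consecutive sub-blocks of a fixed length $L_k$ with $L_k\ge k_g(\vep_k)$ and $g(L_k,\vep_k)/L_k<\vep_k$, take the genuine orbit segment of length $L_k$ starting at the first point of each sub-block, and feed this infinite family of short segments to almost specification. One long segment per block cannot work.

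The second problem is the compactness step. A subsequential limit $z_{K_j}\to z$ does not inherit the tracing merely by ``combining the estimates'': the mistake sets produced by almost specification depend on $K_j$, so the set of good times for $z_{K_j}$ moves with $j$, and $\dist(T^n(z_{K_j}),T^n(z))$ is not controlled uniformly in $n$. The standard repair is to note that the set of points $(g;L,\vep)$-tracing a fixed finite collection of segments is closed (a finite union, over admissible mistake sets, of intersections of preimages of closed balls), that these sets are nested as the number of segments grows, and that nonemptiness of each (from almost specification) plus compactness of $X$ yields a single point tracing all segments simultaneously. With that observation and the corrected block structure, the density bookkeeping in your final paragraph does go through.
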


    \begin{corollary}\label{cor:almostimpvag}
        If a topological dynamical system has the almost specification property, then it has the vague specification property.
    \end{corollary}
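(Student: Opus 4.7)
The plan is to treat this as an immediate consequence of combining two results already available in the excerpt, with essentially no additional work beyond chaining them together. Specifically, I would invoke Theorem 3.5 of \cite{KKO} (restated just above the corollary) to pass from the almost specification property to the asymptotic average shadowing property, and then apply Theorem \ref{thm:AASP-VSP} to conclude the vague specification property.

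More precisely, the plan is: first, assume that $(X,T)$ has the almost specification property. Then by the cited Theorem 3.5 of \cite{KKO}, $(X,T)$ has the asymptotic average shadowing property. Second, by Theorem \ref{thm:AASP-VSP}, which establishes the equivalence of the asymptotic average shadowing property and the vague specification property, $(X,T)$ has the vague specification property. No further estimates, choice of parameters, or auxiliary constructions are needed, since the content of both intermediate results is already fully set up in the preceding sections of the paper.

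There is no real obstacle here, as the corollary is genuinely stated to be a corollary of the two invoked theorems. The only thing worth remarking is that, parallel to Corollary \ref{weak_spec_implies_vague_spec} for the weak specification case, this completes the addition of both top-level implications into the vague specification node of the updated diagram, and in particular justifies the position assigned to the vague specification property in that diagram. Thus the proof reduces to a single sentence citing the two results and their composition.
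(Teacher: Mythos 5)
Your proposal is correct and matches the paper's intended argument exactly: the corollary is obtained by chaining Theorem 3.5 of \cite{KKO} (almost specification implies asymptotic average shadowing) with Theorem \ref{thm:AASP-VSP} (equivalence of asymptotic average shadowing and vague specification). No further work is needed.
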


    It is known that the weak specification and the almost specification properties are independent of each other. Indeed, Theorem 17 in \cite{KOR} shows that there are topological dynamical systems with almost specification but without weak specification. 
    In Section 6 of \cite{KOR} there is a construction of a topological dynamical system with weak specification, but without almost specification (we also refer to \cite{KKO, KLO, KOR,Pavlov} for more details). 
    
    Therefore, Corollary \ref{cor:almostimpvag} implies the following result, which together with Corollary \ref{weak_spec_implies_vague_spec}, answers Question \ref{question_DW}. 
    
    \begin{corollary}
        There are topological dynamical systems with the vague specification property but without the weak specification property. 
    \end{corollary}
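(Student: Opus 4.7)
The plan is to combine Corollary \ref{cor:almostimpvag} with the known independence of the almost specification and weak specification properties. Specifically, I would invoke Theorem 17 in \cite{KOR}, which provides an example of a topological dynamical system $(X,T)$ that has the almost specification property but fails to have the weak specification property. Such an example is exactly what is needed.

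First, I would recall that by Corollary \ref{cor:almostimpvag}, every topological dynamical system with the almost specification property also has the vague specification property. Applying this to the system $(X,T)$ provided by Theorem 17 in \cite{KOR}, we immediately obtain that $(X,T)$ has the vague specification property. On the other hand, by the defining feature of that example, $(X,T)$ does not have the weak specification property. Hence $(X,T)$ is the witness required by the statement.

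There is essentially no obstacle here; the work has already been done upstream. The only thing worth remarking on is that this argument is only one direction of the answer to Question \ref{question_DW}: combined with Corollary \ref{weak_spec_implies_vague_spec}, which gives the implication from weak specification to vague specification, it shows that the vague specification property is strictly weaker than the weak specification property, thereby fully settling the question posed by Downarowicz and Weiss. The proof itself is therefore a one-line citation chain, and in the write-up I would simply name the example from \cite{KOR} and invoke Corollary \ref{cor:almostimpvag}.
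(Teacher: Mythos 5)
Your proposal is correct and matches the paper's own argument exactly: both take the system from Theorem 17 in \cite{KOR} with almost specification but without weak specification, and apply Corollary \ref{cor:almostimpvag} to conclude it has the vague specification property. Nothing further is needed.
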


    \subsubsection*{Answer to Question \ref{question_minimal_aasp}}

    In Section \ref{minimal_example} we present the construction of a class of minimal shift spaces with the $\dbar$-shadowing property from \cite{CKKK} (in particular, these shift spaces have the average shadowing property). By Corollary \ref{minimal_AASP}, every shift space in this class also has the asymptotic average shadowing property, which shows that Question \ref{question_minimal_aasp} has a positive answer.
    
    \subsubsection*{Answer to Question \ref{question_ASP_implies_AASP}}

   The relation between the asymptotic average shadowing property and the almost specification is already clarified in \cite{KLO}. 
   
    A step toward an answer for the second part of Question \ref{question_ASP_implies_AASP} is given by Theorem \ref{equiv_AASP_ASP+Besicovitch} for surjective topological dynamical systems and by Corollary \ref{cor:AASP_equiv_dbar-shadow} for surjective shift spaces. Although we were unable to provide a definitive answer to the question, the lack of examples with the average shadowing property but not Besicovitch complete suggests a potential positive answer. 
    
    \subsubsection*{Answer to the Question \ref{question_factors}}

    Theorem \ref{thm:AASP-VSP} also provides a partial answer to this question. Indeed, we can use Proposition 2 in \cite{Kamae}, which says that the vague specification property is inherited by factors, to confirm that the asymptotic average shadowing property is inherited by factors.

    \begin{corollary}
        Every factor of a topological dynamical system with the asymptotic average shadowing property has the asymptotic average shadowing property.
    \end{corollary}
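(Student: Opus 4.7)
The plan is to obtain this corollary as an immediate consequence of Theorem \ref{thm:AASP-VSP} combined with the fact, due to Kamae, that the vague specification property is inherited by factors. Concretely, given a topological dynamical system $(X,T)$ with the asymptotic average shadowing property and a factor map $\pi\colon (X,T)\to (Y,S)$, I would first invoke Theorem \ref{thm:AASP-VSP} to transfer the hypothesis: $(X,T)$ possesses the vague specification property.

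Next, I would apply Proposition 2 in \cite{Kamae}, which states that whenever $(X,T)$ has the vague specification property and $(Y,S)$ is a factor of $(X,T)$, the system $(Y,S)$ also has the vague specification property. This is the substantive input, and it is precisely what allows us to bypass the difficulty of tracing an asymptotic average pseudo-orbit in $(Y,S)$ directly by lifting it to $(X,T)$, which is awkward because factor maps need not admit well-behaved set-theoretic sections compatible with the Besicovitch pseudometric.

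Finally, I would apply Theorem \ref{thm:AASP-VSP} in the reverse direction to $(Y,S)$: since $(Y,S)$ has the vague specification property, it has the asymptotic average shadowing property, completing the proof. The only real subtlety, and the point that deserves a one-line comment, is to confirm that the notion of factor used by Kamae coincides with the standard notion used throughout this paper (a continuous surjection $\pi\colon X\to Y$ intertwining $T$ and $S$); granted that, the argument is a short diagram chase through the two equivalences and requires no new calculation.
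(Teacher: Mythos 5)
Your argument is exactly the paper's: transfer the hypothesis to the vague specification property via Theorem \ref{thm:AASP-VSP}, apply Kamae's result (Proposition 2 in \cite{Kamae}) that the vague specification property passes to factors, and transfer back. The proposal is correct and matches the paper's proof.
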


    \section*{Acknowledgements}
 The authors wish to express their gratitude to Dominik Kwietniak for suggesting the problem and providing insightful discussions. We also thank the anonymous reviewers for their careful reading and valuable comments that helped us improve this work.

    \bibliographystyle{alpha}
    \bibliography{bibliography.bib}
\end{document}